\renewcommand\appendix{\par
  \setcounter{section}{0}
  \setcounter{subsection}{0}
  \setcounter{equation}{0}
  \setcounter{figure}{0}
  \setcounter{table}{0}
  \renewcommand\thesection{Appendix \Alph{section}}
  \renewcommand\theequation{\Alph{section}\arabic{equation}}
  \renewcommand\thefigure{\Alph{section}\arabic{figure}}
  \renewcommand\thetable{\Alph{section}\arabic{table}}
}
\newtheorem{theorem}{Theorem}[section]
\newtheorem{lemma}{Lemma}[section]
\newcommand{\tg}{\widetilde{g}}
\newtheorem{proposition}{Proposition}[section]
\newtheorem{corollary}{Corollary}[section]
\theoremstyle{definition}
\newtheorem{definition}{Definition}[section]
\newtheorem{example}{Example}[section]
\newtheorem{remark}{Remark}[section]
\renewcommand{\oe}{$m=\infty$ when $n$ is odd and $m=\frac{n-2}{2}$ when $n$ is even}
\newcommand{\be}{\begin{equation}}
\newcommand{\ee}{\end{equation}}
\newcommand{\bnum}{\begin{enumerate}}
\newcommand{\enum}{\end{enumerate}}
\newcommand{\del}{\partial}
\newcommand{\go}{{\stackrel{\scriptscriptstyle{0}}{g}}\phantom{}}
\newcommand{\gtop}[1]{{\stackrel{\scriptscriptstyle{#1}}{h}}\phantom{}}
\newcommand{\htop}[1]{{\stackrel{\scriptscriptstyle{#1}}{h}}\phantom{}}
\newcommand{\ggo}{{\stackrel{\scriptscriptstyle{0}}{\mathbf{g}}}\phantom{}}
\newcommand{\no}{{\stackrel{\scriptscriptstyle{0}}{\nabla}}\phantom{}}
\newcommand{\Ro}{{\stackrel{\scriptscriptstyle{0}}{R}}\phantom{}}
\newcommand{\Lo}{{\stackrel{\scriptscriptstyle{0}}{\Delta}}\phantom{}}
\newcommand{\Bo}{{\stackrel{\scriptscriptstyle{0}}{\Box}}\phantom{}}
\newcommand{\h}{\mathbf{h}}
\renewcommand{\gg}{\mathbf{g}}
\newcommand{\f}{\varphi}
\newcommand{\ba}{\begin{array}}
\newcommand{\ea}{\end{array}}
\newcommand{\beq}{\begin{eqnarray}}
\newcommand{\beqn}{\begin{eqnarray*}}
\newcommand{\eeq}{\end{eqnarray}}
\newcommand{\eeqn}{\end{eqnarray*}}
\newtheorem{lm}{lemma}
\newcommand{\blem}{\begin{lemma}}
\newcommand{\elem}{\end{lemma}}
\newtheorem{thee}{theorem}
\newtheorem{proo}{proposition}
\newtheorem{co}{corollary}
\newtheorem{rem}{remark}
\newtheorem{deff}{definition}
\newcommand{\bd}{\begin{deff}}
\newcommand{\ed}{\end{deff}}
\newcommand{\bl}{\begin{lm}}
\newcommand{\el}{\end{lm}}
\newcommand{\bp}{\begin{proo}}
\newcommand{\ep}{\end{proo}}
\newcommand{\bt}{\begin{thee}}
\newcommand{\et}{\end{thee}}
\newcommand{\bc}{\begin{co}}
\newcommand{\ec}{\end{co}}
\newcommand{\brm}{\begin{rem}}
\newcommand{\erm}{\end{rem}}
\newcommand{\der}{{\rm d}}
\def\frak{\mathfrak}
\def\cal{\mathcal}
\newcommand{\newc}{\newcommand}
\newcommand{\Ker}{\operatorname{Ker}}
\renewcommand{\Im}{\operatorname{Im}}
\let\ccdot\cdot
\def\cdot{\hbox to 2.5pt{\hss$\ccdot$\hss}}
\newc{\aR}{\mbox{\boldmath{$ R$}}}
\newc{\aS}{\mbox{\boldmath{$ S$}}}
\newc{\aT}{\mbox{\boldmath{$ T$}}}
\newc{\aW}{\mbox{\boldmath{$ W$}}}
\newc{\aK}{\mbox{\boldmath{$ K$}}}
\newc{\aL}{\mbox{\boldmath{$ L$}}}
\renewcommand{\P}{\mathsf{P}}
\newcommand{\Ric}{\operatorname{Ric}}
\let\e=\varepsilon
\let\f=\varphi
\let\i=\iota
\newcommand{\hook}{\makebox[7pt]{\rule{6pt}{.3pt}\rule{.3pt}{5pt}}\,}
\newc{\obstrn}[2]{B^{#1}_{#2}}
\newcommand{\rpl}                         
{\mbox{$
\begin{picture}(12.7,8)(-.5,-1)
\put(0,0.2){$+$}
\put(4.2,2.8){\oval(8,8)[r]}
\end{picture}$}}
\newcommand{\lpl}                         
{\mbox{$
\begin{picture}(12.7,8)(-.5,-1)
\put(2,0.2){$+$}
\put(6.2,2.8){\oval(8,8)[l]}
\end{picture}$}}
\renewcommand{\aa}{{\bar{a}}}
\newcommand{\bb}{{\bar{b}}}
\newcommand{\cc}{{\bar{c}}}
\newcommand{\dd}{{\bar{d}}}
\newcommand{\gla}{\frak{gl}}
\newc{\tensor}[1]{#1}
\newc{\Mvariable}[1]{\mbox{#1}}
\newc{\down}[1]{{}_{#1}}
\newc{\up}[1]{{}^{#1}}
\newcommand{\+}{\oplus}
\newcommand{\lag}{\mathfrak{g}}
\newcommand{\lah}{\mathfrak{h}}
\newcommand{\laz}{\mathfrak{z}}
\newcommand{\z}{\mathfrak{z}}
\newcommand{\lak}{\mathfrak{k}}
\newcommand{\lam}{\mathfrak{m}}
\newc{\JulyStrut}{\rule{0mm}{6mm}}
\newc{\midtenPan}{\mbox{\sf S}}
\newc{\midten}{\mbox{\sf T}}
\newc{\midtenEi}{\mbox{\sf U}}
\newc{\ATen}{\mbox{\sf E}}
\newc{\BTen}{\mbox{\sf F}}
\newc{\CTen}{\mbox{\sf G}}
\def\sideremark#1{\ifvmode\leavevmode\fi\vadjust{\vbox to0pt{\vss
 \hbox to 0pt{\hskip\hsize\hskip1em
 \vbox{\hsize3cm\tiny\raggedright\pretolerance10000
 \noindent #1\hfill}\hss}\vbox to8pt{\vfil}\vss}}}%
\numberwithin{equation}{section}
\newcounter{romenumi}
\newcommand{\labelromenumi}{(\roman{romenumi})}
\newcommand{\bma}{\begin{pmatrix}}
\newcommand{\ema}{\end{pmatrix}}
\begin{document}
 \newcommand{\marg}[1]{\marginpar{\small {\tt #1}}}
\bibliographystyle{abbrv}
\title[Conformal Walker metrics and linear Fefferman-Graham equations]{Conformal Walker metrics and linear Fefferman-Graham equations} 
\vskip 1.truecm 
\author{Ian M Anderson} \address{Department of Mathematics and Statistics, Utah State University, Logan Utah, 84322, USA}
\email{\tt Ian.Anderson@usu.edu}
\author{Thomas Leistner}\address{School of Mathematical Sciences, University of Adelaide, SA 5005, Australia} \email{\tt thomas.leistner@adelaide.edu.au}
\author{Andree Lischewski}\address{Institut f\"{u}r Mathematik, Humboldt-University Berlin,
Unter den Linden 6,
10099 Berlin, Germany} \email{\tt  lischews@math.hu-berlin.de}
\author{Pawe\l~ Nurowski} 
\address{Centrum Fizyki Teoretycznej PAN, 
Al. Lotnik\'{o}w 32/46, 
02-668 Warszawa, Poland}
\email{nurowski@cft.edu.pl} \thanks{This research was supported by the Australian Research
Council via the grants FT110100429 and DP120104582 and by the
  Polish Ministry of Research and Higher Education under the grants NN201~607540 and NN202~104838.}
\date{\today}
\subjclass[2010]{Primary 53A30, Secondary 53C50, 53C29}
\keywords{Fefferman-Graham ambient metric, Walker metrics, obstruction tensor, conformal holonomy, pp-waves}

\begin{abstract} The conformal Fefferman-Graham ambient metric construction is one of the most fundamental constructions in conformal geometry. It embeds a manifold with a conformal structure into a  pseudo-Riemannian manifold whose Ricci tensor vanishes up to a certain order along the original manifold. Despite the general existence result of such ambient metrics by Fefferman and Graham,  not many examples of conformal structures with Ricci-flat ambient metrics are known. Motivated by previous examples, 
for which the Fefferman-Graham equations for the ambient metric to be Ricci-flat reduce to a system of linear PDEs, in the present article we develop a method to find ambient metrics for conformal classes of metrics with two-step nilpotent Schouten tensor. Using this method, for metrics for which the image of the Schouten tensor is invariant under parallel transport, i.e., certain types of Walker metrics, we obtain explicit ambient metrics. This includes certain left-invariant Walker metrics as well as pp-waves.
\end{abstract}
\maketitle 

\newcommand{\gat}{\widetilde{\gamma}}
\newcommand{\Gat}{\widetilde{\Gamma}}
\newcommand{\thet}{\widetilde{\theta}}
\newcommand{\Thet}{\widetilde{T}}
\newcommand{\rt}{\widetilde{r}}
\newcommand{\st}{\sqrt{3}}
\newcommand{\kat}{\widetilde{\kappa}}
\newcommand{\kz}{{K^{{~}^{\hskip-3.1mm\circ}}}}
\newcommand{\gpe}{{g_{_{PE}}}}
\newcommand{\upe}{{\mathcal U}_{_{PE}}}
 

\newcommand{\vect}[1]{\mathbf{#1}}
\renewcommand{\k}{\mathbf{k}}
\renewcommand{\l}{\mathbf{l}}
\renewcommand{\e}{\mathbf{e}}
\renewcommand{\f}{\mathbf{f}}
\newcommand{\bprf}{\begin{proof}}
\newcommand{\eprf}{\end{proof}}


\section{Introduction and  main results}

This paper paper is a follow-up of our papers \cite{leistner-nurowski08,leistner-nurowski09,  AndersonLeistnerNurowski15}, where we presented several examples of pseudo-Riemannian conformal structures, not conformally Einstein, with explicit Ricci-flat 
Fefferman-Graham ambient metrics. The Fefferman-Graham ambient metric  is a fundamental construction from conformal geometry which is defined as follows:

Given  a conformal class represented by a metric $g$ on a smooth $n$-dimensional manifold $M$,
 a {\em Fefferman-Graham ambient metric} or just an {\em ambient metric}  is a metric
\begin{equation}\label{ambient-intro}
	\widetilde{g}=2\, \der t  \der (\rho t)+t^2(g(x^i)+ h(x^i,\rho)), 
\end{equation}
defined on 
$\widetilde{M}=(0,\infty)\times M\times(-\epsilon,\epsilon)$, with coordinates $x^i$ on $M$, $t\in(0,\infty)$ and $ \rho\in (-\epsilon,\epsilon)$,
such that 
$h(x^i,\rho)_{|\rho=0}=0
$ and 
 \begin{equation}\label{fg-eqs}
	Ric(\widetilde g) = O(\rho^m),\quad\text{ with $m=\infty$ if $n$ is odd and $m=\tfrac{n-2}{2}$ if $n$ is even. }
\end{equation}
 Fefferman and Graham \cite{fefferman/graham85,fefferman-graham07} have shown that an ambient metric always exists and is unique in a certain sense, which justifies it to call it  {\em the} ambient metric.  
 Moreover, when $n$ is even, there is a conformally covariant, divergence and trace free $(0,2)$-tensor $\cal O$, the {\em Fefferman-Graham obstruction tensor}, which vanishes whenever (\ref{fg-eqs}) holds also for $m\ge \frac{n}{2}$.
 
 We will refer to the  equations (\ref{fg-eqs}) for a metric of the form \eqref{ambient-intro} as the {\em Fefferman-Graham equations}.  Sometimes we will say that a solution of (\ref{fg-eqs}) is given by $h$, by which we mean that $h$ defines a metric $\tg$ via the formula (\ref{ambient-intro}) such that $Ric(\tg)=O(\rho^m)$. Moreover if equation (\ref{fg-eqs}) holds for all $m$ when $n$ is even, we emphasise this by call $\tg$ a {\em Ricci-flat ambient metric}.

%

Finding  explicit (Ricci-fat) ambient metrics amounts to solving a system of second order  PDEs for the unknown symmetric $\rho$-dependent $(0,2)$-tensor field $h$. In general, these PDE are  nonlinear in $h$, however, 
for the examples presented in \cite{leistner-nurowski08,leistner-nurowski09, AndersonLeistnerNurowski15},
we were able to solve these PDEs explicitly, by the following approach:
We found an ansatz for $h$ such that  the operator  $Ric(\widetilde g)$ became linear in $h$, which allowed us to solve the equation $Ric(\widetilde g)=0$.
This raises the immediate question: what are the features of the conformal class responsible for this phenomenon? In the  present paper we will identify one of these features  as  a property of the conformal holonomy:
\begin{theorem}\label{theo1}
Let $(M,[g])$ be a conformal manifold  such that the conformal holonomy  admits an invariant subspace that is totally null  and of  dimension greater than $1$. Then there is a metric $g$ in the conformal class defining the linear differential operator $\cal A$ acting on $\rho$-dependent symmetric bilinear forms, \begin{equation}\label{linop}
\cal A_{ij}(h)=
2\rho \ddot h_{ij}+(2-n) \dot h_{ij}
+  
2 R^{k\ \ l}_{~ij~}h_{kl}
 - \Box h_{ij},
 \end{equation}
 where  $R_{ijkl}$ is the curvature tensor and $\Box=\nabla^k\nabla_k
$ the tensor Laplacian of $g$ and  the dot denotes the  derivative with respect to $\rho$,
  such that a solution of equation (\ref{fg-eqs})  
  is given via (\ref{ambient-intro}) by a divergence free symmetric bilinear form $h$ that solves the equation
  \begin{equation}
\label{quadeq}
 h^{kl}\nabla_k\nabla_lh_{ij}+
\nabla_{k}h_{li}\nabla^lh_{~j}^{k}
+
\cal A_{ij}(h)+
2R_{ij}
=O(\rho^m),
%
\end{equation}
with   $m=\infty$ if $n$ is odd and $m=\tfrac{n-2}{2}$ if $n$ is even, and where
  $R_{ij}$ is the Ricci tensor of $g$. 
\end{theorem}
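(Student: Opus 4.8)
The plan is to unravel the Fefferman--Graham equations $\Ric(\tg)=O(\rho^m)$ for a metric of the form \eqref{ambient-intro}, using the totally null conformal-holonomy-invariant subspace to pick a particularly adapted representative $g$ of the conformal class and a particularly adapted ansatz for $h$. First I would recall the standard computation of the Ricci curvature of the ambient metric $\tg = 2\,\dr t\,\dr(\rho t) + t^2(g+h)$ in terms of $g$ and the $\rho$-family $g_\rho := g+h(\cdot,\rho)$; this is the calculation behind the Fefferman--Graham formalism, expressing the components of $\Ric(\tg)$ as second-order differential expressions in $g_\rho$ and its $\rho$-derivatives, the dangerous nonlinear terms being those quadratic in $h$ and its derivatives, coming from the inverse metric $g_\rho^{kl}$ and from Christoffel-symbol products. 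Carrying this out and separating the linear-in-$h$ piece from the rest should produce precisely the operator $\cal A_{ij}(h)$ in \eqref{linop} together with the leftover quadratic terms $h^{kl}\nabla_k\nabla_l h_{ij} + \nabla_k h_{li}\nabla^l h^k_{\ j}$ and the inhomogeneity $2R_{ij}$ appearing in \eqref{quadeq}.

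The role of Theorem~\ref{theo1}'s hypothesis is then twofold. First, a totally null conformal-holonomy-invariant subspace of dimension $>1$ forces the existence of a metric $g$ in the class whose Schouten tensor $\V$ is two-step nilpotent, with image a totally isotropic $g$-parallel distribution — this is a conformal-holonomy translation of the Walker condition, and it is the structural input that makes the ansatz work. Second, choosing $h$ to take values in the same isotropic, parallel distribution (so that $h$ is itself ``null'' in the appropriate sense and $h^{kl}$ contracts trivially against many curvature and derivative terms) kills the obstructions to linearity: the trace terms $g_\rho^{ij}h_{ij}$ vanish, the inverse metric expansion $g_\rho^{kl} = g^{kl} - h^{kl}+\dots$ truncates, and the divergence-free condition $\nabla^i h_{ij}=0$ removes the remaining first-order cross terms. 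I would verify these simplifications component by component in a Walker coordinate frame adapted to the isotropic distribution, showing that under the stated hypotheses the full Fefferman--Graham system collapses to exactly \eqref{quadeq}, and that the remaining quadratic terms are the only ones that survive.

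The main obstacle I anticipate is bookkeeping: matching the precise coefficients $2\rho$, $(2-n)$, the factor $2$ in front of $R^{k\ \ l}_{\ ij\ }h_{kl}$, and the sign of $-\Box h_{ij}$ requires care with the conventions for $\rho$ versus the more common ambient variable, with the $t^2$ conformal weight, and with curvature/Laplacian sign conventions; a wrong normalization of $\rho$ (e.g.\ a factor of $2$) would shift these coefficients. A secondary subtlety is ensuring that imposing both the ansatz (null-valued $h$) and the gauge $\nabla^i h_{ij}=0$ is consistent — i.e.\ that the two conditions do not over-determine $h$ — which should follow from the parallelism of the isotropic distribution guaranteeing that the divergence operator preserves the space of null-valued symmetric bilinear forms. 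Once the reduction is in place, the claim that a solution of \eqref{quadeq} yields a solution of \eqref{fg-eqs} via \eqref{ambient-intro} is immediate, since \eqref{quadeq} is literally the statement that the surviving components of $\Ric(\tg)$ vanish to the required order.
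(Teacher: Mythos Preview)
Your proposal is essentially correct and follows the same route as the paper: translate the holonomy hypothesis into the existence of a null Ricci Walker representative $g$ (the paper invokes \cite{leistner05a,leistner-nurowski12,Lischewski15} for this step), impose the ansatz $\Im(h)\subset\mathcal N$ together with $\mathrm{div}^g(h)=0$, and compute $\Ric(\widetilde g)$ in an adapted frame to see that it collapses exactly to \eqref{quadeq}; this is the content of the paper's Theorem~\ref{theo3intro} and the computations in Sections~\ref{towards}--\ref{walkersec}. The one step you flag only as a ``secondary subtlety'' the paper promotes to a separate result (Theorem~\ref{theo2intro}): it proves that for a null Ricci Walker metric \emph{every} Fefferman--Graham ambient metric automatically satisfies $\Im(h)\subset\mathcal N$ and $\mathrm{div}(h)=O(\rho^m)$, so the ansatz is forced rather than merely consistent, and the existence of a divergence-free $h$ solving \eqref{quadeq} then follows directly from Fefferman--Graham existence instead of requiring a separate check that the iterative solution of \eqref{quadeq} preserves the constraints.
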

Although the
appearance of the quadratic terms in equation (\ref{quadeq}) is somewhat unsatisfactory, in many cases there is an  ansatz for $h$ such that the quadratic terms vanish and the resulting linear equation for $h$ can be solved explicitly. We will come back to this.

Recall that the conformal holonomy is defined as follows. To a conformal class of signature $(p,q)$ one can assign the normal conformal  $\mathfrak{so}(p+1,q+1)$-valued Cartan connection which induces a principal connection and in turn a metric connection on the vector bundle of conformal standard tractors. The conformal holonomy group is the holonomy group of this connection and its natural representation on $\mathbb{R}^{p+1,q+1}$ is the {\em conformal holonomy representation} or simply the {\em conformal holonomy}. In analogy to Riemannian geometry, the reduction of the conformal holonomy to a proper subgroup of $\mathbf{SO}(p+1,q+1)$ is related to the existence of special structure of the conformal manifold, such as the existence of Einstein scales \cite{Gauduchon90,bailey-eastwood-gover94}, the structure of a  Fefferman space \cite{Fefferman76}, twistor spinors \cite{BFGK91, leitnerhabil}, or exceptional conformal structure \cite{nurowski04,bryant06}. One feature that is very different to Riemannian holonomy reductions is that the same conformal holonomy reductions can induce different structures along different {\em curved orbits} \cite{cgh11}. 
There is however a close relationship between the conformal holonomy and the holonomy of the Levi-Civita connection of the Fefferman-Graham ambient metric \cite{CapGoverGrahamHammerl15} and in the present paper we will analyse this relationship further for a specific class of conformal structure that plays an important role for the classification  of conformal holonomies.

 If the conformal holonomy representation is irreducible, several classification results are known \cite{Alt12,Di-ScalaLeistner11,alt-discala-leistner14}. In the case when the holonomy representation is  not irreducible, three essentially different situations have to be distinguished:  the invariant subspace is a) of dimension one, b) of dimension greater than one and  non-degenerate, or c) of dimension greater than one and degenerate with respect to the tractor metric. For case a) it is well known that, locally on an open and dense set in $M$, there is an Einstein metric in the conformal class. In this case there is an explicit Ricci-flat ambient metric,  see Remark \ref{einsteinremark} below.   Case b) is similar, here 
there is a metric in the conformal class that is a product of Einstein metrics (with related Einstein constants), \cite{armstrong07conf,leitner04, ArmstrongLeitner12}.  
Again, such conformal structures admit Ricci-flat ambient metrics \cite{GoverLeitner09}.  
 The last case c), when the invariant subspace is degenerate, can be reduced to the situation in Theorem \ref{theo1}: intersecting the invariant subspace with its orthogonal space gives a holonomy invariant totally null space. 
It was shown in a 
series of papers \cite{leistner05a,leistner-nurowski12,Lischewski15} that
the assumption in Theorem \ref{theo1} --- that the conformal holonomy admits an invariant totally null subspace of dimension  $k+1>1$ --- is equivalent to the the existence,
locally and outside a singular set, of a totally null distribution $\mathcal N$ of rank $k$
and a  metric $g$ in the conformal class, such that:
\begin{enumerate}
\item[(A)]
The image of the Schouten tensor $\P$ of $g$ is contained in $\mathcal N$ (which  implies  $\mathsf{P}^2=0$),
  \item[(B)] $\mathcal N$ is parallel (with respect to the Levi-Civita connection of $g$). 
  \end{enumerate}  
 In the present paper we will deal with the problem of finding ambient metrics for such conformal classes. 
  
Metrics with 
 a parallel totally null distribution $\cal N$ are called {\em Walker metrics} \cite{walker50I}.
Metrics with properties (A) and (B) are special Walker metrics, for which  the image of the Schouten tensor is contained the parallel null distribution $\mathcal N$. This  implies that these metrics are  scalar flat and hence the Schouten tensor is a constant multiple of the Ricci-tensor. In particular, their Ricci and Schouten tensors are divergence free.  In the following we will call 
metrics that have both properties  (A) and (B)  {\em null Ricci Walker metrics}, referring to the property that  image of the Ricci tensor is totally null.
The case $k=1$  was considered in \cite{leistner-nurowski12}, 
where the metrics were called {\em  pure radiation metrics with parallel rays}.
There are many known examples of null Ricci Walker metrics. This includes Lorentzian pp-waves but also the examples of metrics  we gave in \cite{AndersonLeistnerNurowski15},  which are of  signature $(3,3)$ and lie in Bryant's conformal classes \cite{bryant06}. 
Recently, in   \cite{HammerlSagerschnigSilhanTaghavi-ChabertZadnik16} the ambient metric for {\em Patterson-Walker metrics} was computed. Patterson-Walker metrics are null Ricci Walker metrics in neutral signature $(n,n)$ that arise from projective structures in dimension $n$.
In Section \ref{examples} we will give more examples of null Ricci Walker metrics including left-invariant metrics.
 
With  the above characterisation of the assumption,   Theorem \ref{theo1} is a consequence of several results we will prove in this paper. To explain these results, we recall that all the examples in \cite{AndersonLeistnerNurowski15} satisfy property (A). This observation combined with the fact that $\del_\rho h|_{\rho=0}=2\P$, suggested our ansatz for $h$ as a tensor satisfying $\Im(h)\subset \cal N$.
If not only (A) but also (B) is satisfied, which is the case for most but not all of the examples in \cite{AndersonLeistnerNurowski15}, then we can show that the condition $\Im(h)\subset \cal N$ is necessary:
\begin{theorem}\label{theo2intro}
Let $(M,g)$ be a pseudo-Riemannian null Ricci Walker metric with parallel null distribution $\mathcal N$.
Then for every ambient metric 
$\widetilde{g}=2\, \der t  \der (\rho t)+t^2(g(x^i)+ h(x^i,\rho))$, i.e., a solution for the equations (\ref{fg-eqs}), 
it holds
\begin{equation}
\label{theo2introeq}
\mathrm{div}^g (h)=O(\rho^m),\quad \Im(h)\subset \cal N \mod O(\rho^m)
\end{equation}
with $m=\infty$ when $n$ is odd and $m=\frac{n}{2}$ when $n$ is even.
Moreover, when $n$ is even, the obstruction tensor satisfies $\Im(\cal O)\subset \cal N$, and there is an ambient metric for which $h$ satisfies equations (\ref{theo2introeq}) for  $m=\infty$.
\end{theorem}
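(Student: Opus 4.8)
The plan is to analyse the Fefferman-Graham equations order by order in $\rho$. Write $g_\rho=g+h$ and $h^{(j)}=\tfrac1{j!}\,\partial_\rho^j h\big|_{\rho=0}$, so that $h^{(0)}=0$ and $h^{(1)}=2\P$. For a metric of the form \eqref{ambient-intro}, the condition \eqref{fg-eqs} is equivalent to the vanishing, to the appropriate order in $\rho$, of three systems: the tangential equation, which at order $\rho^p$ takes the form $L_p(h^{(p+1)})=F_p(h^{(1)},\dots,h^{(p)})$, where $L_p$ equals $(p+1)(2p-n+2)\,\mathrm{id}$ plus corrections linear in $h^{(p+1)}$ built from $\P$ and the $g$-trace (the coefficient $(p+1)(2p-n+2)$ coming from the term $2\rho\ddot h_{ij}+(2-n)\dot h_{ij}$ of $\mathcal A$), and $F_p$ is a universal expression in the lower coefficients, in $g$, and in the curvature of $g$ (under the ansatz $\Im(h)\subseteq\mathcal N$ this equation is, essentially, \eqref{quadeq}, with $\mathcal A$ as in \eqref{linop}); the divergence equation; and the trace equation. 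Since $\widetilde g$ is an ambient metric, all three hold modulo $O(\rho^m)$ with $m$ as in \eqref{fg-eqs}. We prove by induction that $h^{(j)}$ has image in $\mathcal N$ and is $g$-divergence free for all $j$ when $n$ is odd, and for $j\le\tfrac{n-2}2$ when $n$ is even --- which is exactly \eqref{theo2introeq}.

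Two properties of a null Ricci Walker metric make the induction go through. Because $\mathcal N$ is parallel, $\nabla^g$ preserves sections of $\mathcal N$; hence $\Box$, iterated covariant derivatives, and the curvature contraction $R^{k\ \ l}_{~ij~}h_{kl}$ all preserve the space $\mathfrak S_{\mathcal N}$ of symmetric bilinear forms with image in $\mathcal N$ (for the curvature term one pairs the result with a vector of $N^\perp$ and, via the pair symmetry of $R$, reduces to $R(X,Y)\mathcal N\subseteq\mathcal N$); the Christoffel-difference tensor relating $\nabla^{g_\rho}$ to $\nabla^g$ annihilates sections of $\mathcal N$; any contraction through the metric of two tensors with image in the totally null $\mathcal N$ vanishes; and every element of $\mathfrak S_{\mathcal N}$ is trace free. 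Secondly, a null Ricci Walker metric is scalar flat, so $\P=\tfrac1{n-2}\Ric$ is divergence free and trace free. This gives the base of the induction: $h^{(0)}=0$, and $h^{(1)}=2\P$ lies in $\mathfrak S_{\mathcal N}$ by (A) and is divergence free by scalar-flatness.

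For the inductive step, assume $h^{(1)},\dots,h^{(p)}\in\mathfrak S_{\mathcal N}$ are $g$-divergence free. On $\mathfrak S_{\mathcal N}$ the correction terms of $L_p$ vanish --- the $g$-trace terms because $\mathfrak S_{\mathcal N}$-forms are trace free, the $\P$-terms because a vector of $\mathcal N$ contracted into a slot of $\P$ is killed --- so $L_p$ restricts there to multiplication by $(p+1)(2p-n+2)$, which is nonzero for $p\ne\tfrac{n-2}2$; since $\P$ is nilpotent, $L_p$ is invertible on all symmetric forms, and its inverse therefore maps $\mathfrak S_{\mathcal N}$ into itself. The right-hand side $F_p$ lies in $\mathfrak S_{\mathcal N}$: it is assembled from $h^{(1)},\dots,h^{(p)}$, $g$ and the curvature of $g$ by the $\mathfrak S_{\mathcal N}$-preserving operations above, the divergence-freeness of the lower coefficients being what keeps the $\nabla^k\nabla_{(i}h_{j)k}$-type contributions to the linearised Ricci tensor inside $\mathfrak S_{\mathcal N}$, and the trace-type terms of the tangential equation dropping out because the lower coefficients are trace free. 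Hence $h^{(p+1)}=L_p^{-1}(F_p)\in\mathfrak S_{\mathcal N}$, so $\mathrm{tr}_g h^{(p+1)}=0$; and the divergence equation at order $\rho^p$, whose $\nabla^{g_\rho}$- and $g_\rho^{-1}$-corrections vanish by the parallelity and null-contraction facts, then reduces to $\mathrm{div}^g(h^{(p+1)})=0$, its right-hand side being a multiple of $d(\mathrm{tr}_g h^{(p+1)})$. This carries the induction up to $p+1\le\tfrac{n-2}2$ when $n$ is even, i.e. $m=\tfrac n2$, and to all orders when $n$ is odd.

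The genuinely delicate point is the critical order $p=\tfrac{n-2}2$, at which $L_p$ degenerates and $h^{(n/2)}$ is undetermined; it lies beyond the range of \eqref{theo2introeq} and enters only through the obstruction statement. There we would use the classical fact that $\mathcal O_{ij}$ is a nonzero universal multiple of the trace-free part of the $\rho^{(n-2)/2}$-coefficient of $Ric(\widetilde g)_{ij}$; that coefficient is assembled from $h^{(1)},\dots,h^{((n-2)/2)}$, $g$ and its curvature by the same $\mathfrak S_{\mathcal N}$-preserving operations --- the quadratic terms carrying an explicit factor $\rho$, so that $h^{(n/2)}$ never enters --- hence it lies in $\mathfrak S_{\mathcal N}$; as $\mathfrak S_{\mathcal N}$-forms are trace free, this yields $\Im(\mathcal O)\subseteq\mathcal N$. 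Finally, for the existence assertion when $n$ is even, put $g_\rho:=g+\sum_{j=1}^{(n-2)/2}h^{(j)}\rho^j$ with the $h^{(j)}$ the coefficients just constructed. Then $h$ is polynomial in $\rho$, vanishes at $\rho=0$, and satisfies $\Im(h)\subseteq\mathcal N$ and $\mathrm{div}^g h=0$ for every $\rho$ --- i.e. \eqref{theo2introeq} with $m=\infty$. The tangential equation for this $g_\rho$ vanishes to order $\rho^{(n-2)/2}$ by construction, while the trace and divergence equations hold identically (every metric contraction of two of $h,\dot h,\ddot h$ vanishes, and the Christoffel corrections vanish as above); by the standard fact that, for $\widetilde g$ of the form \eqref{ambient-intro} with $h|_{\rho=0}=0$, vanishing of the tangential part of $Ric(\widetilde g)$ to order $\rho^{(n-2)/2}$ forces $Ric(\widetilde g)=O(\rho^{(n-2)/2})$, $\widetilde g$ is an ambient metric. (When $n$ is odd the first part already gives \eqref{theo2introeq} with $m=\infty$, and the formal ambient metric is unique.)
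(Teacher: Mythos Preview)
Your approach is essentially the paper's: an induction on the Taylor order, using the three Fefferman--Graham equations in turn to force trace-freeness, divergence-freeness, and $\Im(h^{(m)})\subset\mathcal N$. The paper works in an adapted frame and carries out explicitly the computation you summarise as ``$F_p\in\mathfrak S_{\mathcal N}$'': its Step~2 shows $\partial_\rho^{m-1}R_{ij}(g_\rho)|_{\rho=0}=0$ for $i$ tangent to $\mathcal N^\perp$ by expanding the Christoffel-difference tensor and using the curvature identity $R_{ikhl}=0$ for $e_i\in\mathcal N^\perp$, $e_k\in\mathcal N$ --- precisely the commutation argument you allude to for the $\nabla^k\nabla_{(i}h_{j)k}$ term.

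Two places where your write-up is loose: first, the operator $L_p$ contains no $\P$-correction (only a trace correction), so ``$\P$ nilpotent $\Rightarrow$ $L_p$ invertible'' is not the right reason; the clean route, which the paper takes, is to use the third equation first to get $\mathrm{tr}_g h^{(p+1)}=0$, after which $L_p$ is just the scalar $(p+1)(2p-n+2)$. Second, your existence argument for $n$ even (truncate $h$ at order $\tfrac{n-2}{2}$) is slightly different from the paper's (keep the full ambient metric and choose the undetermined $h^{(m)}$, $m\ge\tfrac n2$, in $\mathfrak S_{\mathcal N}$ and divergence-free); both work, and yours is a special case of theirs with the free coefficients set to zero --- but you should note that the trace condition in Definition~\ref{ambientdef} is automatic because all contractions of two $\mathfrak S_{\mathcal N}$-tensors vanish, as you in effect observe.
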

We will prove this theorem in Section \ref{theo2sec}. Note that the statement about the obstruction tensor  can also be obtained from results in \cite{LeistnerLischewski15}.
Theorem \ref{theo2intro} leads us to study the equation~(\ref{fg-eqs}) for $\widetilde g$ as in (\ref{ambient-intro}) defined by a Walker metric $g$ with parallel null distribution $\cal N$ and with a tensor $h$ with $\Im(h)\subset \cal N$. From the results and computations  in Section~\ref{towards} and Section \ref{walkersec} we obtain the following statement, which together with Theorem \ref{theo2intro} implies Theorem~\ref{theo1}:
\begin{theorem}\label{theo3intro}
Let $(M,g)$ be a null Ricci Walker metric with parallel null distribution $\cal N$ and assume that $h$ is a divergence-free symmetric $(0,2)$-tensor field such that $\Im(h)\subset \cal N$.
Then the metric $\widetilde g$  defined by $h$ via equation (\ref{ambient-intro}) satisfies (\ref{fg-eqs}) if and only if $h$ satisfies equation~(\ref{quadeq}).
\end{theorem}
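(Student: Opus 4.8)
The plan is to compute the Ricci tensor of $\widetilde g$ directly from the formula \eqref{ambient-intro} and rewrite it so that, under the hypotheses on $g$ and $h$, the Fefferman--Graham equations \eqref{fg-eqs} become precisely the tensorial equation \eqref{quadeq}. First I would recall (or re-derive, as in Fefferman--Graham \cite{fefferman-graham07}) the standard expansion of $\Ric(\widetilde g)$ for a metric in the normal form \eqref{ambient-intro}: writing $g_\rho := g + h(\cdot,\rho)$, one has a block decomposition of $\Ric(\widetilde g)$ into a $\der t^2$-component, mixed $\der t\,\der x^i$-components, a $\der t\,\der\rho$-component, and the ``tangential'' $\der x^i\der x^j$-component. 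The key fact, valid for \emph{any} $g$ and $h$, is that the vanishing of the off-diagonal and $\der t\,\der\rho$ blocks is equivalent to two first-order constraints on $h$ --- essentially a divergence condition and a trace condition relating $\dot h$, $g_\rho^{kl}\dot h_{kl}$, and $\nabla^k h_{ki}$ --- while the tangential block gives the genuine evolution equation. This reduction is classical; I would cite it and only reproduce the tangential component in detail.

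Next I would specialize to the null Ricci Walker setting. The hypothesis $\Im(h)\subset\mathcal N$ with $\mathcal N$ totally null forces $h^{kl}h_{kl}=0$ and, more importantly, $g_\rho^{kl}h_{kl}=g^{kl}h_{kl}=0$ (scalar-flatness of $g$ plus nullity), so the conformal factor $\det g_\rho/\det g$ is constant in $\rho$; this collapses many of the nonlinear terms in the general Fefferman--Graham formula. Combined with the assumed divergence-freeness $\nabla^k h_{ki}=0$ and the fact (property (B)) that $\mathcal N$ is parallel --- so that $\nabla$ preserves $\Im(h)$, all contractions of the form $h^{kl}\nabla_{(\cdot)}h_{(\cdot)}$ land in $\mathcal N$ and the curvature term $R^{k\ \ l}_{\ ij}h_{kl}$ is well-behaved --- the trace and divergence constraints coming from the off-diagonal blocks are satisfied automatically (to the relevant order). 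This is where Theorem \ref{theo2intro} is invoked: it guarantees that the conditions \eqref{theo2introeq} hold for any ambient solution, so imposing them on our ansatz loses nothing. What remains is to check that the tangential block $\Ric(\widetilde g)_{ij}$, after dividing by $t^2$ and using $g_\rho^{kl}=g^{kl}$ on $\Im(h)$, is exactly $2\rho\ddot h_{ij}+(2-n)\dot h_{ij}+2R^{k\ \ l}_{\ ij}h_{kl}-\Box h_{ij}+h^{kl}\nabla_k\nabla_l h_{ij}+\nabla_k h_{li}\nabla^l h^{k}_{\ j}+2R_{ij}$, i.e.\ $\cal A_{ij}(h)+h^{kl}\nabla_k\nabla_l h_{ij}+\nabla_k h_{li}\nabla^l h^k_{\ j}+2R_{ij}$, matching \eqref{quadeq}. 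The two directions of the ``if and only if'' then follow: if $h$ solves \eqref{quadeq} the tangential block vanishes to order $O(\rho^m)$ and the constraint blocks vanish by the structural arguments above, giving \eqref{fg-eqs}; conversely \eqref{fg-eqs} forces the tangential block to vanish, which is \eqref{quadeq}.

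The main obstacle is the bookkeeping in the second step: showing that \emph{all} the extra nonlinear and lower-order terms appearing in the general tangential Ricci formula --- terms involving $g_\rho^{kl}\dot h_{kl}$, $g_\rho^{kl}g_\rho^{mn}\dot h_{km}\dot h_{ln}$, $\rho\,(g_\rho^{-1}\dot h)^2$-type contributions, and the difference between $\Box^{g_\rho}$ and $\Box^g$ acting on $h$ --- either vanish identically or collapse to the two quadratic terms in \eqref{quadeq}, using only nullity of $\Im(h)$, parallelism of $\mathcal N$, div-freeness, and scalar-flatness. In particular one must verify that $R^{g_\rho}_{ijkl}h^{kl}$ differs from $R^{g}_{ijkl}h^{kl}$ only by terms quadratic in $h$ of the allowed form, and that no term survives that is quadratic in $\dot h$ without a compensating null contraction. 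I expect this to work cleanly precisely because the parallel null distribution makes every ``dangerous'' contraction take values in a totally null, parallel subbundle, but the verification is the technical heart of the argument and is where the computations of Sections \ref{towards} and \ref{walkersec} are needed.
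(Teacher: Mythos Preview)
Your proposal is correct and follows essentially the same route as the paper: reduce $\Ric(\widetilde g)$ to the three Fefferman--Graham components \eqref{fge1}--\eqref{fge3}; use nilpotency of $h$ (Lemma~\ref{lem1}) to kill the trace block \eqref{fge3} and reduce the divergence block \eqref{fge2} to $\no^k\dot h_{ik}$ (this is Proposition~\ref{fglem}); then expand the Ricci tensor $R_{ij}$ of $g_\rho=\go+h$ appearing in \eqref{fge1} in terms of $\Ro_{ij}$ and $h$ via \eqref{ric}, using involutivity and then parallelism of $\cal N^\perp$ to eliminate the cubic and quartic terms $Q^{(3)},Q^{(4)}$ and simplify $Q^{(2)}$ (Propositions~\ref{ricprop}, \ref{linprop0}, Corollary~\ref{lincor}, Theorem~\ref{fgwalkerthm}).

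One minor logical point: your invocation of Theorem~\ref{theo2intro} is unnecessary here. The hypotheses of Theorem~\ref{theo3intro} already \emph{assume} $\Im(h)\subset\cal N$ and $\mathrm{div}^g(h)=0$, so there is nothing to justify about ``imposing them on our ansatz''; Theorem~\ref{theo2intro} is needed only for the existence direction of Theorem~\ref{theo1}, not for the equivalence statement of Theorem~\ref{theo3intro}. Also, your phrase ``$g_\rho^{kl}=g^{kl}$ on $\Im(h)$'' is slightly imprecise: the paper's cleaner statement is that $g_\rho^{ij}=\go^{ij}-h^{ij}$ exactly (equation~\eqref{invmetric}), and then nilpotency kills the $h^{ij}$ contribution in every contraction with $h$.
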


In the case when the parallel null distribution $\cal N$ has rank one or satisfies an additional condition on the curvature,  we can strengthen this result in the sense that the  quadratic terms in equation (\ref{quadeq})  vanish:

\begin{corollary}\label{introcorollary}
Let $(M,[g])$ be a conformal manifold  
given by a null Ricci Walker metric $g$ with parallel null distribution $\cal N$ that has  rank one, or satisfies $\cal N\hook R=0$, for $R$ the curvature tensor  of $g$.
Then there is an ambient metric, i.e.,   
 a solution of (\ref{fg-eqs}), that is given via (\ref{ambient-intro})  
by a divergence free symmetric bilinear form $h$ that solves the linear system of PDEs
  \begin{equation}
\label{lineq}
\cal A_{ij}(h)+2R_{ij}
=
O(\rho^m),
\end{equation}
%
with $m=\infty$ if $n$ is odd and $m=\tfrac{n-2}{2}$ if $n$ is even, and 
where 
  $R_{ij}$ is the Ricci tensor of $g$. When $n$ is even, the obstruction tensor satisfies 
$\Im(\cal O)\subset {\cal N}$ and   $\cal N\hook \nabla\cal O=0$ and is given by
\[\cal O_{ij}=
c_n\  \Box^mR_{ij},\] where $c_n$ is a nonzero constant and  $\Box^m$ is the $m$-th power of the tensor Laplacian.
\end{corollary}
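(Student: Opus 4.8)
The plan is to deduce Corollary \ref{introcorollary} from Theorem \ref{theo3intro} and Theorem \ref{theo2intro} by showing that under either of the two extra hypotheses the quadratic terms
\[
Q_{ij}(h):=h^{kl}\nabla_k\nabla_l h_{ij}+\nabla_k h_{li}\nabla^l h^{k}_{~j}
\]
in equation \eqref{quadeq} vanish identically for any $h$ with $\Im(h)\subset\cal N$. First I would record the algebraic consequences of $\Im(h)\subset\cal N$: writing $h_{ij}=\phi_a{}_{(i}n^a_{j)}$ for sections $n^a$ spanning $\cal N$, one has $h_{ij}h^{jk}=0$ (since $\cal N$ is totally null), and, because $\cal N$ is parallel, $\nabla_k h_{ij}$ again has image in $\cal N$, so $\nabla_k h_{li}\nabla^l h^k_{~j}$ is a contraction of two tensors each valued in $\cal N$ against the metric — this vanishes by total nullity of $\cal N$. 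That disposes of the second quadratic term with no further assumption. For the first term $h^{kl}\nabla_k\nabla_l h_{ij}$, parallelism of $\cal N$ gives $\nabla_k\nabla_l h_{ij}\in\Gamma(\cal N)$ in the free index pair, while $h^{kl}$ also has image in $\cal N$; contracting $h^{kl}$ into the $k,l$ slots and then noting $h^{kl}=(\text{section of }\cal N)^k\otimes(\cdots)^l$ forces another $\cal N$-against-$\cal N$ pairing, hence $0$. So in fact $Q_{ij}(h)\equiv0$ whenever $\Im(h)\subset\cal N$ and $\cal N$ is parallel; I would double-check whether the paper really needs the extra hypotheses here, or whether they enter only to guarantee that the \emph{divergence-free} solution $h$ furnished abstractly by Theorem \ref{theo2intro} can be taken with $\Im(h)\subset\cal N$ and that the resulting linear problem \eqref{lineq} is solvable to all orders. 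My suspicion is that the rank-one condition, resp.\ $\cal N\hook R=0$, is exactly what is needed to make $\cal A_{ij}(h)$ preserve the subspace of $\cal N$-valued forms so that the linear recursion for the Taylor coefficients of $h$ in $\rho$ closes; this should be checked by inspecting the curvature term $2R^{k\ \ l}_{~ij~}h_{kl}$ in \eqref{linop} and confirming $\Im\big(R^{k\ \ l}_{~ij~}h_{kl}\big)\subset\cal N$, which under $\cal N\hook R=0$ is immediate and in the rank-one case follows from the pp-wave-type curvature normal form.

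Next I would set up the linear recursion. Expand $h_{ij}=\sum_{N\ge1}\rho^N h^{(N)}_{ij}$ (the constraint $h|_{\rho=0}=0$ kills the $N=0$ term, and $h^{(1)}_{ij}=2\P_{ij}$), substitute into $\cal A_{ij}(h)+2R_{ij}=0$, and collect powers of $\rho$. The term $2\rho\ddot h$ contributes $2N(N-1)h^{(N)}$ at order $\rho^{N-1}$, the term $(2-n)\dot h$ contributes $(2-n)N h^{(N)}$ at order $\rho^{N-1}$, and the curvature and Laplacian terms shift the index, giving a recursion of the schematic form
\[
N(2N-n)\,h^{(N)}_{ij}=\big(\Box-2R^{k\ \ l}_{~ij~}\,\cdot\,\big)h^{(N-1)}_{kl},
\]
which is solvable uniquely for $N$ with $2N\neq n$, i.e.\ for all $N$ when $n$ is odd, and for $N<n/2$ when $n$ is even. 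This both produces the solution claimed for $m=\infty$ ($n$ odd) or $m=\tfrac{n-2}{2}$ ($n$ even), and, iterating the recursion from $h^{(1)}_{ij}=2\P_{ij}=\tfrac{2}{2-n}R_{ij}$ (using scalar-flatness), expresses the obstruction as the coefficient at the critical order $N=n/2$:
\[
\cal O_{ij}=c_n\,\Box^{(n-2)/2}\big(\Box R_{ij}-2R^{k\ \ l}_{~ij~}R_{kl}\big)/(\cdots),
\]
which I would then simplify. Here the extra hypothesis does real work: under $\cal N\hook R=0$ (or rank one) the curvature term $R^{k\ \ l}_{~ij~}R_{kl}$ vanishes — $R_{kl}$ has image in $\cal N$ and $\cal N\hook R=0$ annihilates it — so the recursion collapses to $h^{(N)}_{ij}\propto\Box^{N-1}R_{ij}$ and the obstruction becomes $\cal O_{ij}=c_n\,\Box^m R_{ij}$ with $m=(n-2)/2$, as asserted. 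The constant $c_n$ is read off by multiplying the reciprocals $\prod_{N=1}^{m}\big(N(2N-n)\big)^{-1}$ against the normalisations in \eqref{ambient-intro}; I would cite the standard normalisation of $\cal O$ from Fefferman--Graham to pin down its precise value and sign, and in any case it is manifestly nonzero.

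Finally I would verify the two remaining assertions about $\cal O$. That $\Im(\cal O)\subset\cal N$ is immediate from $\cal O_{ij}=c_n\Box^m R_{ij}$ together with $\Im(R)\subset\cal N$ and parallelism of $\cal N$ (so $\Box$ preserves $\cal N$-valued tensors). For $\cal N\hook\nabla\cal O=0$: $\nabla_a\cal O_{ij}=c_n\nabla_a\Box^m R_{ij}$; contracting the free index pair $(i,j)$-slot by a vector field $X\in\Gamma(\cal N^\perp)$ — wait, rather one contracts $\cal N$ into the slots of $\cal O$ — since $\Im(\cal O)\subset\cal N$ and $\cal N$ is totally null, $\cal N\hook\cal O=0$ already; differentiating and using $\nabla\cal N\subset\cal N$ (parallelism) gives $\cal N\hook\nabla\cal O=\nabla(\cal N\hook\cal O)=0$. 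So this is a formal consequence of $\Im(\cal O)\subset\cal N$ plus parallelism and needs no separate argument. The main obstacle I anticipate is not any single computation but ensuring the bookkeeping is airtight: that the solvability of the recursion at non-critical orders is genuinely unobstructed when $\Im(h^{(N)})\subset\cal N$ (one must check the right-hand side stays $\cal N$-valued and divergence-free at each step, using $\mathrm{div}\,R=0$ from scalar-flatness and Bianchi, and the commutation of $\Box$ with $\nabla$ up to curvature terms that again land in $\cal N$), and that the trace-free and divergence-free properties of the resulting $\cal O$ — which Theorem \ref{theo2intro} already guarantees abstractly — are consistent with the explicit formula $c_n\Box^m R_{ij}$; the latter consistency is a useful sanity check rather than part of the proof.
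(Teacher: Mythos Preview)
Your central claim---that the quadratic terms $Q_{ij}(h)=h^{kl}\nabla_k\nabla_l h_{ij}+\nabla_k h_{li}\nabla^l h^{k}_{~j}$ vanish identically whenever $\Im(h)\subset\cal N$ and $\cal N$ is parallel---is false, and this is where the proof breaks. In the paper's index conventions, $h^{kl}$ is nonzero only for $k,l\in\{1,\ldots,p\}$ (the $\cal N$-directions after raising), so $h^{kl}\nabla_k\nabla_l h_{ij}=h^{ab}\nabla_a\nabla_b h_{ij}$. This is a second derivative of $h$ \emph{along} $\cal N$; the contraction is between $h^{ab}$ and the derivative slots, not a metric pairing of two $\cal N$-valued objects, so total nullity of $\cal N$ buys you nothing. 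Likewise, in $\nabla_k h_{li}\nabla^l h^{k}_{~j}$ the $k$-sum is forced into $\cal N$, producing $\nabla_a h_{\bar c\,i}\,g^{\bar c d}\nabla_d h^{a}_{~j}$, which again involves $\nabla_a h$ and does not vanish in general. The paper even exhibits an explicit null Ricci Walker metric in signature $(2,2)$ (the example at the end of Section~4) for which the Fefferman--Graham equations remain genuinely nonlinear in $h$.

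The paper's actual mechanism is different: the extra hypotheses are used precisely to force $\nabla_X h=0$ for all $X\in\cal N$ (equivalently $\cal L_X h=0$), and \emph{that} is what kills $Q_{ij}(h)$. In the rank-one case this is immediate: with $h=h_{nn}(\Theta^n)^2$, the divergence-free condition reads $\nabla^k h_{kn}=\partial_1 h_{nn}=0$. In the higher-rank case one first uses $\cal N\hook R=0$ and Bianchi to get $\nabla_a\P=0$, and then an induction on the Taylor order (Proposition~\ref{liehprop}) shows each $h^{(m)}$ satisfies $\nabla_a h^{(m)}=0$; the point is that $\cal N\hook R=0$ lets $\nabla_a$ commute with $\Box$ and annihilates the curvature coupling. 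Once $\nabla_a h=0$ is known, both quadratic terms vanish trivially, and moreover the curvature term $R_{kijl}h^{kl}=R_{aijb}h^{ab}$ in $\cal A_{ij}$ also vanishes (by $\cal N\hook R=0$, or by Lemma~\ref{curvricwalker} in the rank-one case), so the recursion really is $N(2N-n)h^{(N)}=\Box h^{(N-1)}$ with no curvature correction---which is why the obstruction comes out as a clean power $\Box^m R_{ij}$.

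Finally, your argument for $\cal N\hook\nabla\cal O=0$ misreads the statement. Here it means $\nabla_X\cal O=0$ for $X\in\cal N$, not that contracting $X$ into a tensor slot of $\nabla\cal O$ gives zero. Differentiating $X^i\cal O_{ij}=0$ only gives the latter. The correct argument uses the explicit formula $\cal O=c_n\Box^m R$ together with $\nabla_a R_{ij}=0$ and the commutation of $\nabla_a$ with $\Box$ under $\cal N\hook R=0$.
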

This corollary follows from the previous results by  the following considerations: if the rank of $\cal N$ is one, then $h$ being divergence free implies that 
\begin{equation}\label{lieh}\cal L_Xh=0,\quad\text{ for all }X\in \cal N.\end{equation}
 where $\cal L_X$ denotes the Lie derivative in direction $X$,
and hence that  $\nabla_X h=0$ for all $X$ in $\cal N$, which in turn yields to the vanishing of the quadratic terms in (\ref{quadeq}). Similarly if the rank of $\cal N$ is larger than one, one can show that the curvature condition $\cal N \hook R=0$ implies condition 
\begin{equation}\label{lieP}
\cal L_X\P=0,
\quad\text{ for all }X\in \cal N,
\end{equation}
and consequently  that $\nabla_X \P=0$ for all $X \in  \mathcal N$. This can then be used to show that $h$ has to satisfy the  condition (\ref{lieh}) and which again implies the vanishing of  the quadratic terms.

%
It turns out that for the linearisation of the Fefferman-Graham equations the conditions~(\ref{lieh}) and~(\ref{lieP}) are crucial. In fact, when (\ref{lieP})  is satisfied,  the ansatz (\ref{lieh}) enable us  to  reduce the Fefferman-Graham equations to linear equations in a much larger class  than the one that satisfies the assumptions of Corollary \ref{introcorollary}.
In 
 Section \ref{towards} we show that 
  for $\widetilde{g}$ as in (\ref{ambient-intro}) the Ricci tensor  $Ric(\widetilde{g})$  becomes at most become quadratic in $h$ if we  assume that  
\begin{enumerate}
\item  the image of $\P$ is contained in a totally null distribution $\cal N$,
\item and  that $\cal N^\perp$ is {\em involutive} (but not necessarily parallel). 
\end{enumerate}
The form of the Fefferman-Graham equations in this more general situation, although being at most quadratic in $h$,  is however more complicated than equations (\ref{lineq}). Nevertheless, we find this more general class  noteworthy: the examples of $\mathbf{G}_2$ conformal classes in \cite{leistner-nurowski09,AndersonLeistnerNurowski15}, for which the  linear Fefferman-Graham equation were reduced to linear PDEs,  are {\em not} null Ricci Walker metrics but rather from this more general class. The reduction was possible because these metrics satisfy the additional property 
(\ref{lieP}),
 which suggested the ansatz~(\ref{lieh}).
 
Based on Corollary \ref{introcorollary}, we are able to construct explicit ambient metrics for several examples of  null Ricci Walker metrics, including left-invariant metrics on Lie groups and generalised pp-waves. Our main results in Section \ref{examples} are the following:

 \begin{theorem}\label{biinv-amb-intro}
Let $\lak$ be a two-step nilpotent Lie algebra, 
 $H$ be a Lie group 
with Lie algebra $\lah$, and  $\phi:\lah\to\mathfrak{der}(\lak)$ a Lie algebra homomorphism into the derivations of $\lak$. Let  $G$ be the
 Lie group corresponding to the Lie algebra $\lag$ that is given as the  semi-direct sum
\[\lag=\lah \ltimes_{\phi}\lak.\]
Moreover, let $g$ be a pseudo-Riemannian left-invariant metric on $G$ such that $\z^\perp=\lak$ and $\lag=\lah^\perp\+\z$, where $\z$ is the centre of $\lak$.
Then the conformal class of $g$ on $G$
admits  a
{Ricci-flat}
 ambient metric
\begin{equation}\label{biinv-ambientmetric}\widetilde{g}=2\der(\rho t)\der t+t^2\Big(g+ 
\frac{2 \rho}{n-2}Ric(g)\Big),
%
%
\end{equation}
where  
$n$ is the dimension of $G$ and $Ric(g)$ is the Ricci tensor of $g$.
%
\end{theorem}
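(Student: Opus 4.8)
The plan is to exhibit $\widetilde g$ as the ambient metric produced by Corollary~\ref{introcorollary}. I would proceed in three steps: (i)~show that $g$ is a null Ricci Walker metric whose parallel null distribution $\mathcal N$ is the left-invariant distribution with $\mathcal N_e=\z$; (ii)~verify the curvature condition $\mathcal N\hook R=0$, so that the hypothesis of Corollary~\ref{introcorollary} holds; (iii)~check that the explicit tensor $h:=\tfrac{2\rho}{n-2}\Ric(g)$ solves the linear equation~(\ref{lineq}) \emph{identically}, and that $\Box^m R=0$, so that the corresponding ambient metric is Ricci-flat and, unravelling~(\ref{ambient-intro}), is precisely~(\ref{biinv-ambientmetric}).

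For (i): since $\z\subseteq\lak=\z^\perp$, the distribution $\mathcal N$ is totally null. Applying the Koszul formula to left-invariant fields, and using that $\z$ is central in $\lak$, that $\lak$ is an ideal of $\lag$ with $[\lak,\lak]\subseteq\z$, that every derivation of $\lak$ preserves $\z$, and that $g(\z,\lak)=0$, one gets $\nabla_X Z\in\z$ for all $X\in\lag$, $Z\in\z$; hence $\mathcal N$ and $\mathcal N^\perp=\lak$ are parallel, so $g$ is a Walker metric. A computation of $\Ric(g)$ from the same structural data together with the splitting $\lag=\lah^\perp\oplus\z$ gives $\Ric(X,W)=0$ whenever $W\in\lak$, i.e.\ $\Im(\Ric)\subseteq\mathcal N$. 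In particular the scalar curvature vanishes, so $g$ is a null Ricci Walker metric with $\P=\tfrac1{n-2}\Ric$, $\Ric$ is divergence free by the contracted Bianchi identity, and $\Ric^2=0$ because $\Im(\Ric)\subseteq\mathcal N$ is totally null. For (ii): a further Koszul computation of $R(Z,\,\cdot\,)$ for $Z\in\z$ --- using two-step nilpotency of $\lak$ and $\phi(\lah)\subseteq\mathfrak{der}(\lak)$ --- gives $\mathcal N\hook R=0$; by the pair symmetry of $R$ this also yields $R(X,Y)\lag\subseteq\mathcal N^\perp=\lak$ for all $X,Y$.

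For (iii): the tensor $h:=\tfrac{2\rho}{n-2}\Ric(g)$ is divergence free with $\Im(h)\subseteq\mathcal N$, so by Corollary~\ref{introcorollary} it is enough to check that $h$ solves $\mathcal A_{ij}(h)+2R_{ij}=0$ and that $\Box^m R=0$. Since $\dot h=\tfrac2{n-2}\Ric$ and $\ddot h=0$, substitution into~(\ref{linop}) gives
\[
\mathcal A_{ij}(h)+2R_{ij}=\tfrac{2\rho}{n-2}\bigl(2\,R^{k\ \ l}_{\ ij\ }R_{kl}-\Box R_{ij}\bigr).
\]
Now $R^{k\ \ l}_{\ ij\ }R_{kl}=0$, because raising the indices of $\Ric$ gives a section of $\mathcal N\otimes\mathcal N$ (as $\Ric$ kills $\mathcal N^\perp$), so this contraction is $R$ evaluated with two of its arguments in $\mathcal N$, which vanishes by $\mathcal N\hook R=0$, the pair symmetry of $R$ and the total nullity of $\mathcal N$. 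Likewise $\Box R_{ij}=0$: in the Weitzenböck identity for $\Box\Ric$ every contraction of the full curvature with $\Ric$, as well as $\Ric^2$, vanishes under the same hypotheses, and the scalar-curvature terms vanish since $\mathrm{Scal}=0$. Hence $\mathcal A_{ij}(h)+2R_{ij}=0$ and $\Box^m R=0$, so the ambient metric built from this $h$ via Corollary~\ref{introcorollary} is Ricci-flat and coincides with~(\ref{biinv-ambientmetric}).

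The main obstacle is the chain of Koszul-formula computations in steps (i) and (ii) --- in particular establishing the null Ricci property $\Im(\Ric)\subseteq\mathcal N$ and the curvature condition $\mathcal N\hook R=0$ --- since this is where the hypotheses (namely $\lak$ two-step nilpotent, $\phi(\lah)\subseteq\mathfrak{der}(\lak)$, $\z^\perp=\lak$, and $\lag=\lah^\perp\oplus\z$) are all used in an essential way. Everything else --- total nullity and parallelism of $\mathcal N$, the divergence-freeness of $h$, the substitution into $\mathcal A$, and the (standard) Weitzenböck identity --- is routine.
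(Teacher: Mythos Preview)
Your step~(ii) --- the claim that $\mathcal N\hook R=0$ --- is false in general, and this is a genuine obstruction to invoking Corollary~\ref{introcorollary}. Take $\lak=\z=\mathbb R^2$ abelian (so $\lak$ is trivially two-step nilpotent), $\lah=\mathbb R^2$ abelian, $\phi(e_{\bar 1})e_1=e_2$ and all other $\phi$-images zero, and the metric with $g(e_a,e_{\bar b})=\delta_{ab}$. All hypotheses of the theorem are met (here $n=4$, $p=2$), yet a direct Koszul computation gives $R(e_1,e_{\bar 2})e_{\bar 1}=\tfrac14 e_2$, so $R_{1\bar 2\bar 1\bar 2}=\tfrac14\neq0$ and $\mathcal N\hook R\neq0$. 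Thus the hypothesis of Corollary~\ref{introcorollary} is not available, and your justification of both $R^{k\ \ l}_{\ ij}R_{kl}=0$ and $\Box R_{ij}=0$ collapses. (Separately, $\Box=\nabla^k\nabla_k$ is already the connection Laplacian, so there is no Weitzenb\"ock identity to appeal to here.)

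The paper avoids this by arguing at the level of Theorem~\ref{theo3intro} / Theorem~\ref{fgwalkerthm} and the frame computations of Proposition~\ref{ricpropwalker}. For the specific ansatz $h=\tfrac{2\rho}{n-2}\Ric$ one checks \eqref{quadeq} (equivalently \eqref{fgwalker}) directly: the quadratic terms vanish because $\nabla_Z\Ric=0$ for $Z\in\mathcal N$ (the components $R_{\bar a\bar c}$ are constants and $\Gamma^{\bar c}_{a i}=0$ from Lemma~\ref{walkerlemma2}); the curvature contraction $R_{kijl}h^{kl}$ involves only $R_{a\bar i\bar j b}$, which equals $g_{f(\bar i}\,\mathrm d r^f_{\bar j)b}(e_a)$ by Proposition~\ref{ricpropwalker} and hence vanishes for constant structure constants; and $\Box R_{ij}=0$ follows from the Christoffel pattern in Lemma~\ref{walkerlemma2} (one finds $(\nabla R)_{kij}=0$ unless all of $k,i,j$ are of type~$\bar c$, whence $g^{kl}(\nabla\nabla R)_{klij}=0$). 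None of these steps needs the full condition $\mathcal N\hook R=0$, only the weaker vanishing $R_{a\bar i\bar j b}=0$ together with left-invariance. Replacing your step~(ii) by these checks repairs the argument.
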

We should point out that the ambient metric in (\ref{biinv-ambientmetric}) is not unique (when $n$ even or when non-analytic ambient metrics are allowed). In fact,  in Theorem \ref{biinv-amb} we find the most general form for Ricci-flat ambient metrics for the left-invariant metrics in Theorem \ref{biinv-amb-intro} and show that the ambiguity is parametrised by $\frac{p(p+1)}{2}$ functions of $n-p$ variables, where $p$ is the dimension of $\lah$.

Finally, amongst other results, in Section \ref{examples} we extend our results in \cite{leistner-nurowski08}:

\begin{theorem}\label{pptheo-intro}
Let 
\[
g=2\der u\der v +H\,\der u^2 +\sum_{i=1}^{n-2}(\der x^i)^2
\]
be a Lorentzian pp-wave metric with
 $H=H(x^1, \ldots, x^{n-2},u)$ a function not depending on $v$. Let $\Delta$ is the flat Laplacian in $n-2$ dimensions.
 Then an ambient metric for $[g]$, i.e. a solution of (\ref{fg-eqs}), is given by
 \begin{equation}\label{pp-intro}
 \widetilde{g}=2\der(\rho t)\der t+t^2 g+
 t^2 \left(\sum_{k=1}^{m} \frac{\Delta^k(H)}{k!\prod_{i=1}^k(2i-n)}\rho^k
 +
\sum_{k=0}^\infty \frac{\Delta^k(f)}{k!\prod_{i=1}^k(2i+n)}\rho^{\frac{n}{2}+k}
 \right)\der u^2,
 \end{equation}
 where \oe, and  $f=f(x^1, \ldots, x^{n-2},u)$ is an arbitrary smooth function. Moreover:
 \begin{enumerate}
\item  When $n$ is odd,
$f\equiv 0$  gives the unique Ricci-flat ambient metric that is analytic in $\rho$. 
\item When $n$ is even, the obstruction tensor $\cal O$ is given by
\[\cal O=c_n\, \Delta^{\frac{n}{2}}(H)\, \der u^2,\]
for some non-zero constant $c_n$.  If $\cal O$ vanishes, the metric (\ref{pp-intro}) is Ricci-flat.
\end{enumerate}
\end{theorem}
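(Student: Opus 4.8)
\textbf{Proof proposal for Theorem \ref{pptheo-intro}.}
The plan is to recognise that a pp-wave of the stated form is a null Ricci Walker metric with parallel null distribution $\cal N=\Span(\partial_v)$ of rank one, and then invoke Corollary \ref{introcorollary}. First I would record the standard facts about the pp-wave $g=2\,\der u\,\der v+H\,\der u^2+\sum(\der x^i)^2$: the vector field $\partial_v$ is null and parallel, so $\cal N$ is a parallel rank-one null distribution; the only nonvanishing curvature components come from second $x$-derivatives of $H$, giving $R_{ij}=-\tfrac12\Delta(H)\,\der u^2$ with $\Delta$ the flat Laplacian in the $x^i$; and the scalar curvature vanishes, so $\P=\tfrac1{n-2}\Ric$ takes its image in $\cal N$. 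Hence $g$ satisfies (A) and (B) with $\mathrm{rank}\,\cal N=1$, and Corollary \ref{introcorollary} applies: an ambient metric is obtained from a divergence-free $h$ with $\Im(h)\subset\cal N$ solving the \emph{linear} system $\cal A_{ij}(h)+2R_{ij}=O(\rho^m)$.

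Next I would make the ansatz $h=\phi(x^i,u,\rho)\,\der u^2$, which automatically has image in $\cal N$ and is divergence free (since $\partial_v$ is parallel and $\der u$ is parallel). For such $h$ the tensor Laplacian $\Box h_{ij}$ reduces to $(\Delta\phi)\,\der u^2$, the curvature term $2R^{k\ \ l}_{\ ij\ }h_{kl}$ vanishes because $\cal N\hook R=0$ contracts it away, and $R_{ij}=-\tfrac12\Delta(H)\der u^2$. Thus $\cal A_{ij}(h)+2R_{ij}=O(\rho^m)$ becomes the scalar PDE
\begin{equation}\label{pp-scalar}
2\rho\,\ddot\phi+(2-n)\dot\phi-\Delta\phi=\Delta(H)+O(\rho^m),
\end{equation}
together with the initial condition $\phi|_{\rho=0}=0$ forced by $h|_{\rho=0}=0$ (and, for the particular solution, $\dot\phi|_{\rho=0}=2\P_{uu}=\tfrac{1}{n-2}\Delta H$, consistent with $\del_\rho h|_{\rho=0}=2\P$). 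Then I would solve \eqref{pp-scalar} by a power series in $\rho$ with coefficients built from iterated flat Laplacians of $H$. Writing $\phi=\sum_{k\ge1}a_k\rho^k$, the recursion from the indicial operator $2\rho\,\del_\rho^2+(2-n)\del_\rho$ is $2k(k-1)a_k+(2-n)ka_k=\Delta a_{k-1}$ for $k\ge 2$ and $(2-n)a_1=\Delta H$, i.e. $k(2k-n)a_k=\Delta a_{k-1}$, which unwinds to $a_k=\dfrac{\Delta^k(H)}{k!\prod_{i=1}^k(2i-n)}$ — exactly the first sum in \eqref{pp-intro}. When $n$ is odd this runs forever and gives a convergent (formal) solution; when $n$ is even the recursion breaks at $k=n/2$ because $2k-n=0$, which is precisely where the obstruction appears: solvability at that order requires $\Delta^{n/2}(H)=0$, and the homogeneous solution of the indicial equation contributes the second sum $\sum_{k\ge0}\dfrac{\Delta^k(f)}{k!\prod_{i=1}^k(2i+n)}\rho^{n/2+k}$ with $f$ arbitrary, obtained by the same recursion started from the indicial root $n/2$.

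Finally I would address the three itemised claims. For the odd case, analyticity in $\rho$ forces the second series to be absent (it starts at the non-integer power $\rho^{n/2}$... in fact for $n$ odd $n/2$ is a half-integer, so demanding an honest power series in $\rho$ kills it), leaving $f\equiv0$ and the unique analytic Ricci-flat ambient metric; uniqueness also follows from the general Fefferman--Graham uniqueness in the odd-dimensional case. For the obstruction tensor, Corollary \ref{introcorollary} already gives $\cal O_{ij}=c_n\Box^mR_{ij}$; specialising to the pp-wave, $\Box^m$ acting on $-\tfrac12\Delta(H)\der u^2$ is $(-\tfrac12)\Delta^{m}\Delta(H)\der u^2$, and absorbing constants into $c_n$ (and using $m=\tfrac{n-2}{2}$, so $m+1=n/2$) yields $\cal O=c_n\Delta^{n/2}(H)\,\der u^2$; when this vanishes, \eqref{pp-scalar} is solvable to all orders, so with $f$ chosen (e.g. $f\equiv0$) the metric \eqref{pp-intro} is genuinely Ricci-flat. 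I expect the main obstacle to be purely bookkeeping: verifying that the tensorial operator $\cal A_{ij}$ really collapses to the scalar operator in \eqref{pp-scalar} on the ansatz $h=\phi\,\der u^2$ (one must check the Christoffel symbols of the pp-wave do not spoil $\Box(\phi\,\der u^2)=(\Delta\phi)\,\der u^2$, which uses that the only nonzero Christoffels are $\Gamma^v_{uu},\Gamma^v_{ui},\Gamma^i_{uu}$ and that $\der u$ is parallel), and then carefully tracking the product of factors $2i-n$ versus $2i+n$ in the two branches of the recursion so the displayed formulas match.
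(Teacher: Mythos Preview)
Your proposal is correct and follows essentially the same route as the paper. You recognise the pp-wave as a rank-one null Ricci Walker metric, invoke Corollary~\ref{introcorollary} to reduce the Fefferman--Graham equations to the scalar linear PDE $2\rho\ddot\phi+(2-n)\dot\phi-\Delta\phi=\Delta H$, and solve it by the indicial-root/Frobenius recursion, exactly as the paper does via Corollary~\ref{walkercorol}, Lemma~\ref{sol-lemma} and Theorem~\ref{pftheo}; the only difference is that the paper passes through the intermediate class of generalised pp-waves before specialising, whereas you go directly to the Lorentzian case.
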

In addition to this, we obtain   non-analytic  Ricci-flat ambient metrics with $h \downarrow 0$ if $\rho\to0$ from 
formulas (\ref{pro}) and   (\ref{preq}) in Theorems \ref{pftheo} and \ref{pptheo}, in particular in the case when $n$ is even and the obstruction tensor does not vanish.

We believe that the formulas we provide in this paper are useful to obtain explicit solutions to the Fefferman-Graham equations for other examples than the ones given in Theorems~\ref{biinv-amb-intro} and \ref{pptheo-intro}.

\section{The Fefferman-Graham ambient construction}
\subsection{The Fefferman-Graham ambient metric construction}
A \emph{conformal structure} $(M,[g])$ on an $n=p+q$ dimensional manifold $M$ is an equivalence $[g]$ class of $(p,q)$-signature metrics on $M$, such that two metrics $g$ and $\hat{g}$ are in the same class $[g]$ if and only if there exists a function $\phi$ on $M$, such that 
$\hat{g}={\rm e}^{2\phi}g.$

Let us focus on a given conformal structure $(M,[g])$. In the following definition of an ambient metric 
we will refer to a manifold 
 $\widetilde{M}$ that  is  a product
$$\widetilde{M}=(0,\infty)~\times~ M~\times~(-\epsilon,\epsilon),\quad\quad \epsilon>0,$$ 
with respective coordinates $(t,x^i,\rho)$.

\begin{definition}\label{ambientdef}
An \emph{ambient metric  $\widetilde{g}$ for $(M,[g])$ (that is in normal form with respect to $g$)}  is a  metric on $\widetilde{M}$ given by
\begin{equation}\label{fgmetric}
\widetilde{g}=2\der t\der(\rho t)+t^2g(x^i,\rho),\end{equation}
with a 1-parameter family of symmetric bilinear forms $g(x^i,\rho)$ on $M$, parametrized by $\rho$, such that 
$$g(x^i,\rho)_{|\rho=0}=g(x^i),$$
for some metric $g=g(x^i)$ from the conformal structure $[g]$
and such that
\begin{itemize}
\item 
$Ric(\widetilde{g}) = O(\rho^{\infty})$ if $n$ is odd, and 
\item 
$Ric(\widetilde{g}) = O(\rho^{\frac{n}{2}-1})$ and  $\mathrm{tr}_g \left(\rho^{1-\frac{n}{2}} Ric(\widetilde{g})_{|TM \otimes TM}\right) = 0$ along $\rho=0$, if $n$ is even.
\end{itemize}
\end{definition}
The existence and uniqueness result for ambient metrics in \cite{fefferman/graham85,fefferman-graham07} states that for each choice of $g=g(x^i)$ there is an ambient metric  w.r.t.~$g$. In all dimensions $n\geq 3$,  $g(x^i,\rho)$ has an expansion of the form \[g(x^i,\rho) = \sum_{k \geq 0} g^{(k)}(x^i) \rho^k\] starting with
\[ g(x^i,\rho) = g(x^i) + 2\rho\, \mathsf{P}(x^i) + O(\rho^2), \]
where $\P
=\frac{1}{n-2}(Ric-\frac{Scal}{2(n-1)} g)$ is the Schouten tensor of $g=g(x^i)$.
In odd dimensions the Ricci-flatness condition determines $g^{(k)}$ uniquely for all $k$, whereas in even dimensions only the  $g^{\left(k < \frac{n}{2} \right)}$ and the trace of $g^{\left(\frac{n}{2}\right)}$ are determined uniquely.
The ambient metric construction is conformally invariant in the sense that ambient metrics for different metrics in the conformal class are diffeomorphic to each other (modulo $O(\rho^{\frac{n}{2}})$ when $n$ is even).

For $n$ even a conformally invariant $(0,2)$-tensor on $M$, the {\em ambient obstruction tensor} $\mathcal{O}$, obstructs the existence of smooth solutions to $Ric(\tg) = O(\rho^{\frac{n}{2}})$. For $\widetilde{g}$ in normal form w.r.t.~$g$ as in Definition \ref{ambientdef} it is given by
\begin{align}
\mathcal{O} = c_n \left(\rho^{1-\frac{n}{2}} ({Ric(\widetilde{g})}_{|TM \otimes TM})\right)|_{\rho = 0}, \label{defobstr}
\end{align}
where  $c_n$ is some known nonzero constant \cite{fefferman-graham07}. From this one can deduce that $\mathcal{O}$ is trace- and divergence free.

\begin{remark}\label{einsteinremark}
If $[g]$ contains the \emph{flat} metric $g_0$ than the corresponding ambient metric is $$\widetilde{g}=2\der t\der(\rho t)+t^2g_0.$$ 
Similarly, if $[g]$ contains an \emph{Einstein} metric $g_\Lambda$, $Ric(g_\Lambda)=\Lambda g_\Lambda$, then $$\widetilde{g}=2\der t\der(\rho t)+t^2 (1+\frac{\Lambda\rho}{2(n-1)})^2g_\Lambda$$  
is an ambient metric for $[g_\lambda]$ that is Ricci-flat.
\end{remark}

\subsection{The Fefferman-Graham equations}

Given a conformal structure and having its representative $\go$, the search for a corresponding Fefferman-Graham ambient metric  
$$\widetilde{g}=2\der(\rho t)\der t+t^2g(x,\rho),$$
consists in finding a 1-parameter family $g(x,\rho)$ of metrics on $M$ such that the Ricci tensor of the metric $\widetilde{g}$ satisfies equations (\ref{fg-eqs}). 
In Ref. \cite[Eq. 3.17]{fefferman-graham07} the components of $Ric(\widetilde{g})$ for (\ref{fgmetric}) were written explicitly for the unknown tensor $g=g(x^i,\rho)$. Writing $g$ as $g=g_{ij}\der x^i\der x^j$, with $g_{ij}=g_{ij}(x^k,\rho)$, equation (\ref{fg-eqs}) then read as:
\begin{eqnarray}
\rho \ddot{g}_{ij}-\rho g^{kl}\dot{g}_{ik}\dot{g}_{jl}+\tfrac12\rho g^{kl}\dot{g}_{kl}\dot{g}_{ij}-\left(\tfrac{n}{2}-1\right)\dot{g}_{ij}-\tfrac12g^{kl}\dot{g}_{kl}g_{ij}+R_{ij}&=&O(\rho^m),\label{fge1}\\
\label{fge2} g^{kl}\left(\nabla_k\dot{g}_{il} -\nabla_i\dot{g}_{kl}\right)&=&O(\rho^m),\\
\label{fge3} g^{kl}\ddot{g}_{kl}+\tfrac12 g^{kl}g^{pq}\dot{g}_{pk} \dot{g}_{ql}&=&O(\rho^m),
\end{eqnarray}
for $m=\infty$ when $n$ is odd and $m=\frac{n-2}{2}$ when $n$ is even.
Here for each  $\rho$,  $\nabla$ is the Levi-Civita connection of the metric $g(x^k,\rho)=g_{ij}(x^k,\rho)\der x^i\der x^j$,  $R_{ij}$ is the Ricci tensor of $g(x^i,\rho)$, and the dot denotes partial derivative of $g_{ij}$ with respect to $\rho$. The left-hand sides of these equations are the components of the Ricci-tensor $Ric(\widetilde{g})$ of $\widetilde {g}$.

The first of the Fefferman-Graham equations above is a system of nonlinear 2nd order PDEs for the coefficients $g_{ij}$. It is also obvious that finding the general solution for this system with a given initial condition ${g_{ij}}_{|\rho=0}=\go_{ij}$ is rather hopeless.
One can search for Fefferman-Graham metrics assuming that the metric $g(x,\rho)$ admits a power series expansion with integer powers in $\rho$. 
Fefferman and Graham \cite{fefferman-graham07} gave expressions for the first few terms in the power series expansion in $\rho$ of $g(x,\rho)$ so that $\widetilde{g}$ is Ricci-flat up to the order 3. Up to this order, their expansion reads:
$$
g=\go+2\P \rho+\mu\rho^2+\dots,$$ 
with
$\P$ being the Schouten tensor
 for $\go$,   
and
$$(4-n)\mu_{ij}=B_{ij}+(4-n)\P_i^{~k}\P_{kj}.$$ 
Here $B$ is the Bach tensor of the metric $\go$ defined by
$$B_{ij}=\no^k A_{ijk}-\P^{kl}W_{kijl},$$
with 
$$A_{ijk}=\no_j\P_{ki}-\no_k\P_{ji}$$
the Cotton tensor. 
The symbol $\no$ denotes the Levi-Civita connection for $\go$ and $W^i_{~jkl}$ is the Weyl tensor for $g$.

\subsection{Our approach}\label{approach}
Our approach in this paper will be the following: We will write the unknown family of semi-Riemannian metrics $g(x^i,\rho)$ in the Fefferman-Graham metric as 
\[g(x^i,\rho)=\go(x^i)+ h(x^i,\rho), 
\]
where $\go=\go(x^i)$ is a suitable metic from the conformal class (independent of $\rho$) and $h=h(x^i,\rho)$  is symmetric, $\rho$-dependent symmetric bilinear form on $M$. For our approach we will express the Levi-Civita connection and the Ricci tensor of $g(x^i,\rho)$, which is needed in equations (\ref{fge1}, \ref{fge2}, \ref{fge3}), in terms of the Levi-Civita connection and the Ricci tensor of $\go$. 
For this,  recall the formulas  relating the Levi-Civita connections and the curvatures of  two given metrics $g_{ij}$ and $\go_{ij}$. The difference of both Levi-Civita connections is
 given by a tensor field $C^k_{~ij}$,
\be\label{nab}
\nabla_iX_j-\no_iX_j=C^k_{~ij}X_k,\ee
where $X_k$ is a one-form. For vector fields we have
\[\nabla_iX^j-\no_iX^j=-C^j_{~ik}X^k.\]
Since both connections are torsion-free, it is
$C^k_{~ij}=C^k_{~ji}$, which, together with $\nabla_ig_{jk}=0$, implies
\be
\label{c}
C^k_{~ij}=\tfrac{1}{2}g^{kl}\left( \no_lg_{ij} -\no_i g_{jl}-\no_jg_{il}\right)
\ee
For the curvature tensors, defined by $R_{ijk}^{\ \ \ \, l}v_l=2 \nabla_{[i}\nabla_{j]}v_k$ we obtain
\[R_{ijk}^{\ \ \ \,  l}=\Ro_{ijk}^{\ \ \ \, l}+2  \no_{~[i}C_{~j]k}^l +2 C_{~k[i}^pC_{~j]p}^l,\]
and hence for the Ricci tensor
\be
\label{ric}
R_{ij}=R_{ikj}^{\ \ \ k}=\Ro_{ij}+
\no_{i}C_{~kj}^k - \no_{k}C_{~ij}^k +  C_{~ij}^pC_{~kp}^k - C_{~jk}^pC_{~ip}^k.
\ee
We will use these formulas later on.

\bigskip

In the following we will also consider symmetric $(0,2)$-tensors that are symmetric with respect to the metric $\go$ and moreover $2$-step nilpotent. This will be our assumption on the Schouten tensor $\P$ as well  the ansatz for $h$ in the ambient metric.
About such tensors, we recall the following
 algebraic fact:
\blem\label{alglem}
Let $(M,\go)$ be a semi-Riemannian manifold of dimension $n$ and $h\in End(TM)$ be a selfadjoint, i.e., symmetric with respect to $\go$, endomorphism field such that $h^2=0$. Then there is a totally null vector distribution $\cal N$ such that $\Im(h)\subset \cal N$ and $\Ker(h)=\Im(h)^\perp\subset \Ker(h)$. More precisely, there exist a local co-frame 
$
\Theta^1, \ldots ,\Theta^n,
$
such that
\[
 \go=2\sum_{i=1}^p\epsilon_i \Theta^{2i-1} \Theta^{2i}+\sum_{j=2p+1}^{n}\epsilon_{j}(\Theta^j)^2,
 \]
 where $p$ is the dimension of the image of $h$ and  with $\epsilon_i=\pm 1$, and
\[
 h^\flat:=\go(h.,.)=\sum_{i=1}^p\epsilon_i (\Theta^{2i})^2.
\]
\elem
This lemma follows from a result about the normal form of a linear map $h$ that is selfadjoint with respect to non-degenerate bilinear form $\go$, see for example \cite[Theorem 12.2]{LancasterRodman05siam}. 
For $h^2=0$ this result implies that  at a point in $M$, there is a basis $\e_1,\ldots , 
\e_{n}$, 
such that $h$ and $\go$ are given as
\[
h=\begin{pmatrix}
J&0&0&0\\
0&\ddots&0&0
\\
0&0&J&0
\\
0&0&0&0
\end{pmatrix},
\ \ \ 
\go=\begin{pmatrix}
\epsilon_1G&0&0&0\\
0&\ddots&0&0
\\
0&0&\epsilon_pG&0
\\
0&0&0&\mathbf{1}_{(\epsilon_{2p+1},\ldots , \epsilon_{n})}
\end{pmatrix},
\]
in which $h$ consists of $p=\dim(\mathrm{Im}(h))$ Jordan blocks $J:=\begin{pmatrix}0&1\\0&0\end{pmatrix}$, $G:=
\begin{pmatrix}0&1\\1&0\end{pmatrix}$,  and $\mathbf{1}_{(\epsilon_{2p+1},\ldots , \epsilon_{n})}$ is the diagonal matrix with $\epsilon_{2p+1},\ldots , \epsilon_{n}$ on the diagonal. From this we get that $h^\flat=\go(h.,.)$ is given as
$h^\flat(\e_{2i},\e_{2j})=\epsilon_i\delta_{ij}$
and zero otherwise. 
Hence, in the  dual frame $\Theta^i$ defined by $\Theta^i(\e_j)=\delta^i_{~j}$, $h^\flat $ and $\go$ are given as in the lemma. It
 implies
\begin{corollary}
Let $(M,\go)$ be a semi-Riemannian manifold with Ricci tensor $Ric$ and Schouten tensor $\P$.
Then the following are equivalent:
\begin{enumerate}
\item $Ric^2=0$,
\item $\P^2=0$,
\item $\Im(\P)$ is totally null,
\item $\Im(Ric)$ is totally null.
\end{enumerate}
If any of these conditions is satisfied, then 
 $(M,\go)$ has vanishing scalar curvature.
\end{corollary}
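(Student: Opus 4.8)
The statement to prove is the Corollary giving the equivalence of $Ric^2=0$, $\P^2=0$, $\Im(\P)$ totally null, $\Im(Ric)$ totally null, together with the claim that any one of these forces vanishing scalar curvature. The plan is to reduce everything to the two facts already established: first, the algebraic classification in Lemma \ref{alglem}, which tells us that a $\go$-selfadjoint endomorphism with $h^2=0$ has totally null image contained in its kernel; second, the standard relation $\P=\tfrac{1}{n-2}\big(Ric-\tfrac{Scal}{2(n-1)}\go\big)$ between the Schouten and Ricci tensors. Throughout, identify $Ric$ and $\P$ with the $\go$-selfadjoint endomorphism fields $\go^{-1}Ric$ and $\go^{-1}\P$, so that squaring makes sense.

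\textbf{Key steps.} I would first prove $(3)\Leftrightarrow(1)$ and $(4)\Leftrightarrow(2)$ as a single observation: for a selfadjoint endomorphism $h$, $\Im(h)$ being totally null means $\go(h\,v,h\,w)=0$ for all $v,w$, i.e.\ $\go(h^2 v,w)=0$ for all $v,w$ (using selfadjointness to move one $h$ across), which by nondegeneracy of $\go$ is equivalent to $h^2=0$; apply this with $h=Ric$ and with $h=\P$. Next I would handle $(2)\Leftrightarrow(4)$ via the trace: if $\P^2=0$ then by Lemma \ref{alglem} (or directly) $\P$ is nilpotent, hence $\mathrm{tr}(\P)=0$; but $\mathrm{tr}(\P)=\tfrac{1}{2(n-1)}Scal$, so $Scal=0$, and then $\P=\tfrac{1}{n-2}Ric$, which immediately gives $Ric^2=0$ and transfers all the image/null statements between $\P$ and $Ric$. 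This simultaneously establishes the final sentence of the Corollary, since in each equivalent case we have just shown $Scal=0$. For the converse direction of this block --- deducing $\P^2=0$ from $Ric^2=0$ --- I would argue that $Ric^2=0$ forces $Ric$ nilpotent, hence $\mathrm{tr}(Ric)=Scal=0$, so again $\P=\tfrac{1}{n-2}Ric$ and $\P^2=0$ follows. Thus the whole cycle closes.

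\textbf{Main obstacle.} There is essentially no analytic content here; the one point that needs a little care is the implication ``$h$ selfadjoint with $h^2=0$ $\Rightarrow$ $\mathrm{tr}(h)=0$'' in \emph{indefinite} signature, where one cannot simply invoke positive-definiteness to conclude $h=0$. The cleanest route is to note that $h^2=0$ forces the minimal polynomial of $h$ to divide $x^2$, so all eigenvalues of $h$ (over $\mathbb{C}$) vanish, whence $\mathrm{tr}(h)=0$ --- or, even more directly, to read this off from the explicit Jordan normal form displayed just before the Corollary, where $h$ is a direct sum of $2\times 2$ nilpotent Jordan blocks $J$ and a zero block, all of which are traceless. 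Once that is in hand, the rest is bookkeeping with the Schouten--Ricci relation and the definition of ``totally null'', and the Corollary follows.
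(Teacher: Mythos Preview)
Your proof is correct and follows essentially the same route the paper intends: the Corollary is stated immediately after Lemma~\ref{alglem} as a direct consequence, and your argument unpacks exactly that implication via the selfadjointness identity $\go(hv,hw)=\go(h^2v,w)$ together with the trace argument. One small slip: your labels are transposed --- applying the selfadjointness argument to $h=Ric$ gives $(1)\Leftrightarrow(4)$ and to $h=\P$ gives $(2)\Leftrightarrow(3)$, and the trace/Schouten step then yields $(1)\Leftrightarrow(2)$ --- but the mathematics is sound.
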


\subsection{Necessary conditions for the ambient  metric of null Ricci Walker metrics}

\label{theo2sec}
In this section we will derive conditions on the $h$ of the ambient metric for a conformal class that contains a null Ricci Walker metric $\go$. 
Recall that we defined a {\em null Ricci Walker-manifold}, as a 
 pseudo-Riemannian manifold $(M,\go)$ that admits  a vector distribution $\cal N\subset TM$ of rank $p>0$ such that $\cal N$
is totally null with respect to $\go$, 
 invariant under parallel transport with respect to the Levi-Civita connection $\no$ of $\go$, and contains the image   of the Schouten tensor $\P$, or equivalently of the Ricci tensor.
 Based on the fact that $\dot h|_{\rho-0}=2\P$, our ansatz for $h$ in the following section will be to assume that  
 the image of  $h$  is also contained in $\cal N$. We will now show that for null Ricci Walker metrics this ansatz is in fact necessary, at least up to the critical order when $n$ is even. The following theorem will imply Theorem~\ref{theo2intro} from the introduction.

\begin{theorem}\label{theo2}
Let $(M,\go)$ be a null Ricci Walker metric of dimension $n>2$ with Schouten tensor $\P$ whose image is contained in a $\no$-parallel totally null distribution ${\cal N}$. 
Let 
$\widetilde{g} = 2\der t\, \der(\rho t) + t^2 g$ with $g=g(x^i,\rho)$ be an ambient metric for $\go$ in the sense of Definition~\ref{ambientdef}.
Then for \[h=g-\go
 = \sum_{m \geq 1} \frac{1}{m!} \gtop{m}\, \rho^m\] with $\gtop{m}=\gtop{m}(x^i)$ it holds the following:
\begin{enumerate}
\item If $n$ is odd, then
\begin{align}
\Im \gtop{m} \subset {\cal N}, \label{sta1} \\
\no_k {\gtop{m}}_{i}^{~k} = 0, \label{sta2}
\end{align}
for all $m\ge 1$.
\item If $n$ is even, then (\ref{sta1}) and  (\ref{sta2}) must hold for $m\le \frac{n}{2}-1$ and the  obstruction tensor satisfies  \[\Im(\cal O)\subset \cal N.\]
Moreover, one can choose an ambient metric such that the corresponding $\gtop{m}$ satisfy  (\ref{sta1}) and (\ref{sta2})  for all $m\ge 1$.
\end{enumerate}
\end{theorem}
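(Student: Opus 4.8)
The plan is to prove the statement by induction on $m$, using the Fefferman-Graham equations (\ref{fge1})--(\ref{fge3}) rewritten in terms of $h=g-\go$ and expanded in powers of $\rho$. The base case is $\gtop{1}=2\P$, for which (\ref{sta1}) holds by hypothesis (the image of $\P$ is in $\cal N$) and (\ref{sta2}) holds because null Ricci Walker metrics are scalar-flat with divergence-free Schouten tensor. For the inductive step, I would assume $\Im\gtop{j}\subset\cal N$ and $\no_k{\gtop{j}}_i{}^k=0$ for all $j\le m-1$ (with $m\le\frac n2-1$ when $n$ is even, and all $m$ when $n$ is odd) and extract the order-$\rho^{m-1}$ coefficient of equation (\ref{fge1}). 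The key algebraic observation is that the genuinely nonlinear terms in (\ref{fge1}) --- namely $\rho g^{kl}\dot g_{ik}\dot g_{jl}$, $\tfrac12\rho g^{kl}\dot g_{kl}\dot g_{ij}$, and the metric-inverse corrections hidden in $R_{ij}$ and in $g^{kl}\dot g_{kl}g_{ij}$ --- all involve products of the $\gtop{j}$'s with $j$ strictly less than the current order, hence by the inductive hypothesis they are built from tensors whose image lies in $\cal N$. Since $\cal N$ is totally null and $\no$-parallel, one shows using Lemma~\ref{alglem} (in the co-frame adapted to $\cal N$, where $\go=2\sum\epsilon_i\Theta^{2i-1}\Theta^{2i}+\sum\epsilon_j(\Theta^j)^2$) that contracting two such tensors with $g^{kl}=\go^{kl}+O(\rho)$ again produces a tensor with image in $\cal N$, and in fact that the $\no$-parallelism propagates the divergence-free condition. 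So modulo terms already known to have image in $\cal N$, the order-$\rho^{m-1}$ part of (\ref{fge1}) reduces to a linear relation of the schematic form $\bigl(\text{const}\cdot m - \text{const}\bigr)\gtop{m}_{ij}\equiv (\text{terms with image in }\cal N)$, where the numerical coefficient $\tfrac{m-1}{1}\cdot 1-(\tfrac n2-1)-\tfrac12\cdot 0 = m-\tfrac n2$ is nonzero precisely when $m\ne\tfrac n2$; this lets us solve for $\Im\gtop{m}\subset\cal N$. The divergence condition (\ref{sta2}) then follows by feeding this back into equation (\ref{fge2}) at the appropriate order, again using that $\no\cal N\subset\cal N$ and that the trace equation (\ref{fge3}) forces the relevant traces of $\gtop{m}$ to vanish.

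For the even-dimensional obstruction statement, the point is that at order $m=\tfrac n2$ the coefficient $m-\tfrac n2$ vanishes, so (\ref{fge1}) no longer determines $\gtop{n/2}$; instead its trace-free $TM\otimes TM$ part at this order is (up to the nonzero constant $c_n$) exactly the obstruction tensor $\cal O$ by (\ref{defobstr}). But the right-hand side of that order-$\tfrac n2$ equation is, by the same argument as above, a sum of quadratic expressions in $\gtop{1},\dots,\gtop{n/2-1}$ together with curvature terms of $\go$, all of which have image in $\cal N$ by the inductive hypothesis already established for $m\le\tfrac n2-1$; hence $\Im(\cal O)\subset\cal N$. Finally, for the statement that one can choose an ambient metric with $\Im\gtop{m}\subset\cal N$ and $\no_k\gtop{m}{}_i{}^k=0$ for \emph{all} $m$: beyond the critical order the higher $\gtop{m}$ are no longer uniquely determined, and the freedom is exactly the freedom to add a trace-free symmetric tensor at order $\tfrac n2$; I would show that one may make this choice inside $\cal N$ (e.g. the choice $\gtop{n/2}=0$ on the trace-free part, or more precisely choosing the undetermined data to have image in $\cal N$) and then run the recursion (\ref{fge1}) forward --- now with nonzero coefficient $m-\tfrac n2$ for $m>\tfrac n2$ --- to see that all subsequent $\gtop{m}$ are forced to have image in $\cal N$ and to be divergence-free, by the identical inductive argument.

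The main obstacle I anticipate is \textbf{bookkeeping the nonlinear and inverse-metric terms carefully enough to see that they genuinely only involve lower-order $\gtop{j}$ and that ``image in $\cal N$'' is closed under all the tensorial operations that appear} --- in particular handling $g^{kl}=\go^{kl}-h^{kl}+h^{kp}h_p{}^l-\cdots$, the connection-difference tensor $C^k{}_{ij}$ from (\ref{c}) and its appearance in the Ricci formula (\ref{ric}), and the $\Box$/Laplacian-type terms, while keeping track of which contractions are with $\go$ (for which $\cal N$ is totally null, killing many terms) versus with a tensor whose indices are raised by $\go^{kl}$. The cleanest route is probably to work once and for all in the adapted co-frame of Lemma~\ref{alglem}, where $\Im(h)\subset\cal N$ means $h$ has only components $h_{2i,2j}$ (with $\cal N=\Span(\Theta^1,\Theta^3,\dots)$-annihilator appropriately), so that $h^{kl}h_{\cdots}$-type products vanish identically because raising an index of such an $h$ with $\go^{kl}$ lands in $\cal N$ and then pairing with another such $h$ gives zero; this makes the vanishing of the quadratic terms modulo $\cal N$ essentially automatic. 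A secondary subtlety is the divergence equation (\ref{sta2}): one must check that $\no$-parallelism of $\cal N$ together with $\Im\gtop{m}\subset\cal N$ does \emph{not} by itself give $\no_k\gtop{m}{}_i{}^k=0$, so equation (\ref{fge2}) really is needed, and that the lower-order divergence hypotheses are exactly what is required to cancel the cross terms there.
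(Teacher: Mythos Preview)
Your proposal is essentially the paper's own proof: induction on $m$ using the Fefferman--Graham equations (\ref{fge1})--(\ref{fge3}), with the base case $\gtop{1}=2\P$, the nonlinear terms at each order reducing to products of lower $\gtop{j}$'s which vanish by the induction hypothesis because $\cal N$ is totally null and $\no$-parallel, the $(m-\tfrac n2)$ coefficient in front of $\gtop{m}$ coming from the first equation, divergence-freeness from the second equation, and the obstruction statement from the failure of the coefficient at $m=\tfrac n2$. The paper organises the induction in a slightly different order (trace from (\ref{fge3}), then divergence from (\ref{fge2}), then image from (\ref{fge1})), and carries out the Ricci-of-$g(\rho)$ analysis you flag as the main obstacle as a separate preparatory step, but the substance is the same.

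One small correction on the final clause. For even $n$ and $m\ge \tfrac n2$ you propose to ``run the recursion (\ref{fge1}) forward'' to force $\Im\gtop{m}\subset\cal N$. But in Definition~\ref{ambientdef} the ambient condition for even $n$ is only $Ric(\tg)=O(\rho^{\tfrac n2-1})$ plus a trace condition at order $\tfrac n2$; there is no equation at order $\rho^{m-1}$ for $m\ge\tfrac n2$, so there is no recursion to run. The paper's argument here is simpler than yours: the $\gtop{m}$ for $m\ge\tfrac n2$ are subject to no constraint (beyond the single trace condition), so one simply \emph{chooses} them to be divergence-free with image in $\cal N$. Your version would be the right one if one were constructing a Ricci-flat ambient metric to all orders, but that requires $\cal O=0$, which is not assumed.
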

\begin{remark}
The statement about the obstruction tensor in the case $n$ even can also be obtained from results in \cite{LeistnerLischewski15}.
\end{remark}

\begin{remark}
Note that \eqref{sta1} is equivalent to $\gtop{m}_{ij} = 0$ unless $i,j \in \{n-p+1,...,n\}.$ Moreover, we use the following convention: $g^{kl}$ refers to the inverse of $g_{kl} = g_{kl}(x^i,\rho)$. However, whenever a raised index appears on a coefficient $\gtop{m} = \gtop{m}(x^i)$, the index is raised w.r.t. $\go$, i.e. ${\gtop{m}}^i_{~j}  := {\go}^{ik} \gtop{m}_{kj}$. 
\end{remark} 
\bprf
The proof is carried out by induction over $m$, where we assume $m\le \frac{n}{2}-1$ when $n$ is even. 
When $n$ is odd, we have that $Ric(\tg)=\O(\rho^\infty)$ and when $n$ is even that $Ric(\tg)=O(\rho^{\frac{n}{2}-1})$.

\bigskip

\textit{Step 1:} For $m=1$, the statement follows from the assumption on $\P$ as well as the contracted version of the second Bianchi identity and $\P_{\ i}^i = 0$. 

Assuming the induction hypothesis that the statement holds for $\gtop{b}$ with $ 1 \leq b \leq m-1$, we show that the statement also holds for $\gtop{m}$. As a preparation, note that as a consequence of the induction hypothesis and parallelity of $\cal N$ we have 
\begin{equation}
\label{22}
{\gtop{u}}^{ki} \gtop{v}_{kj} = 0, \quad 
{\gtop{u}}^{ki} \no_j \gtop{v}_{kl} = 0,
\quad
\text{ for all $1 \leq u,v \leq m-1$.}
\end{equation}
Moreover, for the inverse $g^{ij}$ of $g_{ij}$ the induction hypothesis implies that
\begin{equation}
\label{ginv-m-1}
g^{ij}=\go^{ij}- \sum_{p=1}^{m-1} \frac{1}{p!}\htop{p}^{ij}\rho^p +O(\rho^m).
\end{equation}
Indeed, it is
\begin{eqnarray*}
g_{ik}\Big(\go^{kj}- \sum_{p=1}^{m-1} \frac{1}{p!} \htop{p}^{kj}\rho^p\Big)
&=&
\delta_i^{~j}
+\sum_{p,q=1}^{m-1} \frac{1}{q!p!} \htop{q}_{ik}\htop{p}^{kj}\rho^{p+q}+O(\rho^m),
\end{eqnarray*}
so that the first equation in (\ref{22}) verifies (\ref{ginv-m-1}).
Moreover, equations (\ref{22}) and (\ref{ginv-m-1}) then imply that
\begin{align}
\partial_{\rho}^u (C_{~ij}^k)_{\rho = 0} = -\frac{1}{2}{\go}^{kl} \no_i \gtop{u}_{jl} = -\frac{1}{2} \no_i {\gtop{u}}^k_{j}, \label{fur}
\quad\text{ for $i \in \{1,...,n-p\}$ and  $u\le m-1$, }
\end{align}
where the $C_{~ij}^k$ were defined in Section \ref{approach}.

\bigskip

\textit{Step 2:}
Here we show that the induction hypothesis implies that
\begin{align}
 \partial^{a}_{\rho}R_{ij}|_{\rho=0} = 0,
 \quad\text{ for $a\le m-1$ and $i \in \{1,...,n-p\}$, }
 \label{fg1a}
\end{align}
where $R_{ij} $ is the Ricci tensor of $g_{ij}(\rho)$.
To this end, we rewrite this using \eqref{ric} at $\rho = 0$
\begin{align}
\partial^{a}_{\rho}R_{ij} = \partial^{a}_{\rho} \left(\no_i C^k_{~kj} - \no_k C^k_{~ij} + C^q_{~ij}C^k_{~kq} - C^q_{~jk} C^k_{~iq} \right). \label{expansion}
\end{align}
Everywhere, not only at $\rho = 0$, we have $C^k_{~kj} = -\frac{1}{2} g^{kl} \no_j g_{kl}$. Expanding the $g$-s in terms of the $\gtop{u}$  using   the induction hypothesis as well as \eqref{22} and (\ref{ginv-m-1}) reveals that $C^k_{~kj} = O(\rho^{a+1})$. Thus, the first and third term in \eqref{expansion} vanish at $\rho=0$. The fourth term is treated as follows: 

Expanding $\partial_{\rho}^{m-1}\left(C^q_{~jk} C^k_{~iq} \right)$ at $\rho = 0$ gives  a sum of certain coefficients times summands of the form $(\partial_{\rho}^uC^q_{~jk})(\partial_{\rho}^vC^k_{~iq})$ with $u+v \le m-1$. Assuming 
 $i \in \{1,...,n-p\}$ and applying \eqref{fur} to this
yields
\[
(\partial_{\rho}^uC^q_{~jk})(\partial_{\rho}^vC^k_{~iq})
=
\frac{1}{4}
\go^{pq}\go^{kl}\no_j
{\gtop{u}}_{kp}\no_i \gtop{v}_{ql}
=0,\]
since $u$ and $v$ are $\le m-1$ by the induction hypothesis and the fact that $\cal N$ is parallel.
Thus, the fourth term in \eqref{expansion} vanishes at $\rho = 0$. 

Finally, we show that the second term in \eqref{expansion} vanishes at $\rho = 0$: Assuming 
 $i \in \{1,...,n-p\}$ and using \eqref{fur} again, this term is given as
\begin{align}
\frac{1}{2} \no_k \no_i \gtop{a}^k_{~j}. \label{finalterm}
\end{align}
By the induction hypothesis, we must necessarily have that $k \in \{1,...,p\}$. As $\cal N$ is $\no$-invariant it follows for the curvature of $\go$
\begin{align}
\stackrel{0}{R}_{ikhl} = 0 \text{ for all } i \in \{1,...,n-p \}, k \in \{1,...,p\},
\end{align}
see also Lemma \ref{walkerlemma2}.
This shows that the covariant derivatives in \eqref{finalterm} commute  and one obtains $\no_i$ applied to the divergence of $\gtop{a}$, which vanishes by the induction hypothesis. Thus, \eqref{fg1a} is established.

\bigskip

Now we are going to differentiate the Fefferman-Graham equations (\ref{fge1}, \ref{fge2}, \ref{fge3}) with respect to $\rho$ and use that 
\[\partial^k_{\rho} Ric(\tg)=0,\quad \text{ for all $k$ if $n$ is odd, and for $k\le\frac{n}{2}-2$ if $n$ is even.}\]

\textit{Step 3:}
Applying $\partial_{\rho}^{m-2}$ to the third Fefferman-Graham equation \eqref{fge3}, where $\partial_{\rho}$ always denotes the Lie derivative of a tensor in $\rho$-direction, and then evaluating at $\rho = 0$ yields using \eqref{22} that
\begin{align}
{\go}^{kl} \gtop{m}_{kl} =
0,\quad\text{ for all $m$ if $n$ is odd and for $m\le \frac{n}{2}$ if $n$ is even.} \label{trace}
\end{align}

\bigskip

\textit{Step 4:}
We apply $\partial_{\rho}^{m-1}$, for $m\le \frac{n}{2}-1$ if $n$ is even,  to the second Fefferman-Graham equation \eqref{fge2} and evaluate at $\rho = 0$. Using \eqref{trace} and rewriting $\nabla$ in terms of $\no$ and $C$, the result is
\begin{align}
0 &= {\go}^{kl}\no_k \gtop{m}_{il} + c_{u,v,w} \, {\gtop{u}}^{kl} \left( \partial_{\rho}^v (C^h_{~ki}) \gtop{w}_{hl} + \partial_{\rho}^v (C^h_{~kl}) \gtop{w}_{ih} -\partial_{\rho}^v (C^h_{~ik}) \gtop{w}_{hl} -\partial_{\rho}^v (C^h_{~il}) \gtop{w}_{kh}  \right)_{\rho = 0} \label{divfor}
\end{align}
for certain integer coefficients $c_{u,v,w}$, where $u+v+w = m$ and $1 \leq w \leq m-1$. Using $C_{~ij}^k = C_{~ji}^k$ as well as \eqref{22}, the bracket reduces to
\begin{align}
 {\gtop{u}}^{kl} \left( \partial_{\rho}^v (C^h_{~kl}) \gtop{w}_{ih} \right)_{\rho = 0} - \left(\partial_{\rho}^v (C^h_{~il})\right)_{\rho = 0} {\gtop{w}}^{~l}_{h}. \label{summ} 
\end{align}
In order for the second term in \eqref{summ} to be nonzero, we must necessarily have that $l \in \{1,...,p\}$. In this situation, we can insert \eqref{fur} for the $C$-term and it follows using \eqref{22} immediately that the resulting term vanishes. It remains to analyze the first term in \eqref{summ}. Unwinding the definitions, it is  given by
\begin{align}
{\gtop{u}}^{kl} \partial_{\rho}^v \left( g^{hj} \left(\no_j g_{kl} - \no_k g_{jl} - \no_l g_{kj} \right) \right)_{\rho = 0} \gtop{w}_{ih}. \label{formula}
\end{align}
If the $\rho$-derivative falls on $g^{hj}$, then the resulting contraction with $\gtop{w}_{ih}$ is zero by \eqref{22}. Thus $g^{hj}$ in \eqref{formula} can be replaced by ${\go}^{hj}$. But then \eqref{formula} involves a factor ${\gtop{w}}^j_{~i}$, which can only be nonzero if $j \in \{1,...,p\}$, and \eqref{formula} then reduces to 
\begin{align}
{\gtop{u}}^{kl} \no_j \gtop{v}_{kl}  {\gtop{w}}^j_{i} = 0.
\end{align}
Thus, every term in \eqref{divfor} except for the first one vanishes and we obtain $\no_k {\gtop{m}}^k_{~i} = 0$, which establishes \eqref{sta2}. 

\bigskip

\textit{Step 5:}
In order to prove \eqref{sta1}, we apply $\partial_{\rho}^{m-1}$ to the first Fefferman-Graham equation \eqref{fge1}, assume that $i \in \{1,...,n-p\}$ and evaluate at $\rho = 0$. Using the induction hypothesis and \eqref{22} applied to the first-fifth term in the Fefferman-Graham equation \eqref{fge1}, \eqref{trace} applied to the fifth term, as well as 
(\ref{fg1a}), 
we obtain that at $\rho = 0$ and for 
 $i \in \{1,...,n-p\}$ that
\begin{align}
\left(m-\frac{n}{2} \right) \gtop{m}_{ij} + \partial^{m-1}_{\rho}R_{ij}|_{\rho=0}= \left(m-\frac{n}{2} \right) \gtop{m}_{ij}  = 
\partial_{\rho}^{m-1}(
Ric_{ij}(\tg)|_{\rho=0}
.\label{fg1}
\end{align}
If $n$ is odd,
$\partial_{\rho}^{m-1}(
Ric_{ij}(\tg)|_{\rho=0}=0$ for all $m$ and hence
 equation (\ref{fg1})   shows that $\gtop{m}_{ij}=0$ for $i=1, \ldots n-p$ completing the induction and establishing (\ref{sta1}) for all $m$.

If $n$ is even, 
$\partial_{\rho}^{m-1}(
Ric_{ij}(\tg)|_{\rho=0}=0$ for all $m\le \frac{n}{2}-1$, and hence 
equation (\ref{fg1}) shows that $\gtop{m}_{ij}=0$ for $i=1, \ldots n-p$ for all $m\le \frac{n}{2}-1$. But by taking $m=\frac{n}{2}$, it also gives a formula for the obstruction tensor $\cal O_{ij}
$, in which $c_n$ is a non-zero constant:
\[\cal O_{ij}=c_n \del_\rho^{\frac{n}{2}-1}\widetilde{Ric}_{ij}|_{\rho=0}
= 
c_n \partial^{\frac{n}{2}-1}_{\rho}R_{ij} |_{\rho=0}=0,\]
 if $i\in \{1 ,\ldots n-p\}$ by (\ref{fg1a}). 
This verifies the statement about the obstruction tensor.

\bigskip

Finally, in the case that $n$ is even, the terms $\gtop{m}_{ij} $, for $m\ge\frac{n}{2}$,  in an ambient metric   are not subject to any equation and we can choose them to be divergence free and with image in $\cal N$. 
This completes the proof of the Theorem.
\eprf

\section{Towards linear Fefferman-Graham equations}
\label{towards}
In this and the fo
In this section we will compute the Ricci tensor for metrics of the form  
\[\widetilde{\gg}=2\der(\rho t)\der t+t^2(
\ggo+\h),\] where $\h=\h(\rho)$ is a $\rho$-dependent family of symmetric bilinear forms with $\h|_{\rho=0}=0$ and moreover with the property that
\[\Im(\h)\subset \cal N,\]
for a totally null distribution $\cal N$. 
This is motivated by our aim to find the ambient metrics for metrics with two-step nilpotent Schouten tensor $\P$. As we have seen in Lemma \ref{alglem}, if the Schouten tensor $\P$  is two-step nilpotent, 
its image  is contained in a totally null vector distribution $\cal N$.
On the other hand  from  \cite{fefferman-graham07} we know that   $\dot \h |_{\rho=0}=2\P$, which  
leads to our ansatz 
$\Im(\h(\rho))\subset \cal N$ for all $\rho$. We will then  
 successively   impose further conditions on $\cal N$ and on $\h$ so that the Fefferman-Graham equations become at most quadratic and eventually linear in $\h$.

\subsection{Conventions}
\label{conventions}
 In this  and
 in the following sections we work with specific (co)-frames. Hence we will distinguish between tensors  (written in boldface letters)  $\gg$, $\ggo$,  $\h$ and their components $g_{ij}$, $\go_{ij}$, $h_{ij}$  in a {\em specific} (co)-frame that is adapted to $\cal N$  and later on satisfies additional properties. Some of the statements in the next sections  will only hold for the components $h_{ij}$ of $\h$ in such a basis.

Let $\ggo$ be a semi-Riemannian metric and $\cal N$ be a  vector distribution that is totally null and of rank $p>1$.
 We fix a local frame  
 \be\label{frame}
 \e_1, \ldots ,\e_n\quad\text{such that $\e_1, \ldots \e_p$ span $\cal N$ and $\e_1, \ldots , \e_{n-p}$ span $\cal K=\cal N^\perp$.}
 \ee
 We will use the following index conventions:
\be\label{indices}
\begin{array}{rcl}
i,j,k, \ldots &\in &\{1, \ldots, n\}
\\
a,b,c, \ldots &\in &\{1, \ldots , p\}
\\
A,B,C, \ldots &\in &\{p+1, \ldots ,n-p\}
\\
\aa,\bb,\cc \ldots &\in &\{n-p+1, \ldots, n\}.
\end{array}
\ee
We use the indices $i,j,k,\ldots$ as abstract indices (or with respect to an arbitrary frame), whereas indices $\aa, B, \cc$ will refer to components in a frame $\e_a,\e_B,\e\cc$,  such that 
\be\label{index}
\begin{array}{rcl}
\ggo(\e_\aa,\e_b)\ =\ \go (\e_b,\e_\aa)&=&\go_{\aa b}\ =\ \go_{b \aa }\text{ constant and non degenerate,}
\\
\ggo(\e_A,\e_B)\ =\ \go(\e_B,\e_A)&=&\go_{AB}\ =\ \go_{BA}\text{ constant and non degenerate,}
\\
\ggo(\e_i,\e_j)&=&0\text{ otherwise.}
\end{array}
\ee
In other words, if $\Theta^1, \ldots , \Theta^n$ denote the algebraic duals to the $\e_i$'s, i.e.
\[\Theta^i(\e_j)=\delta^i_{~j}
\]
then the metric is
 \be\label{gnilp}
\ggo\ =\ 
\go_{ij}\Theta^i \Theta^j\ =\ 2
\go_{a \cc} \Theta^a\Theta^\cc+\go_{AB}\Theta^A  \Theta^B.
\ee
Note that the inverse $g^{ij}$ of the matrix $\go_{ij}$ is given by $g^{a\bb}=g^{\bb a}$ and $g^{AB}$ satisfying
\[
\go_{a\bb}g^{\bb c}=\delta_{a}^{~c},\qquad
\go_{\aa b}g^{b \cc}=\delta_{\aa}^{~\cc},
\qquad
\go_{AB}g^{BC}=\delta_{A}^{~C}.
\]
This relates the algebraic duals $\Theta^i$ to the metric duals $\ggo(\e_i,.)$ of $\e_i$ as follows
\[
\Theta^a= \go^{a\cc}\ggo(\e_\cc,.),\qquad
\Theta^\aa= \go^{\aa c}\ggo(\e_c,.),\qquad
\Theta^A=\go^{AB}\ggo(\e_B,.)
\]
Now we consider a  symmetric bilinear form $\h$ (possibly depending on a parameter $\rho$) that satisfies  
\[\mathrm{Im}(\h^\sharp)\subset \cal N,\]
where $\h^\sharp$ denotes the metric dual to $\h$, $\h(X,Y)=\ggo(\h^\sharp(X),Y)$.
This is equivalent to $\h$ being of the form
\be\label{hnilp}
\h:= h_{\aa\cc}\, \Theta^\aa \Theta^\cc = h_{ij}\Theta^i\circ \Theta^j,
\ee
i.e., $h_{ij}=0$ unless $i,j=\aa, \cc$, 
for smooth functions $h_{\aa \cc }=h_{\aa \cc}(\rho,x)$ with $h_{\aa\cc}=h_{\cc\aa}$,
 The corresponding $(1,1)$ tensor $\h^\sharp$ has components
\[h_\aa^{~b}=g^{b\cc}h_{\aa\cc}\]
and all others zero,
i.e.
\[\h^\sharp= h_\aa^{~b}\Theta^\aa\otimes \e_b.
\]
and satisfies
\[(\h^\sharp)^2=0,\ \text{ i.e. } h_\aa^{~k}h_{k}^{~b}=0.\]
It holds that 
\[ \cal K=\cal N^\perp \subset \mathrm{ker(\h^\sharp}).\]
Finally, we obtain the $(2,0)$-tensor defined 
by
$h^{ij}=g^{ik}g^{jl}h_{kl}$, i.e.,
with
\[h^{bd}=\go^{b\aa}g^{d \cc}h_{\aa\cc}\]
and all other components zero.
From now on the components of all the tensor are given in the frame (\ref{frame}) with the index conventions as in (\ref{index}).
We have
\blem \label{lem1}
For $\h$ as in (\ref{hnilp})
denote by $\h^{(r)}=(h^{(r)}_{ij})$ the tensor whose components are given by the $r$-th $\del_\rho$-derivative of the components of $h_{ij}$, i.e.,
 $\h^{(r)}:=\del^r_\rho(h_{ij})\Theta^i\circ \Theta^j$. 
Then 
\be\label{hprime}
\go^{ij}h^{(r)}_{ij}=0\ \text{ and }\
h^{(r)}_{ik}h^{(s) k}_{~\ j} =0\text{ for all }0\le r,s.
\ee
Moreover, if $\no$ is the Levi-Civita connection of $\ggo$, then
\be\label{nabh}\no_kh^{(r)}_{ij}=0,\ \text{ unless $i=\aa$ or $j=\aa$},\ee
as well as 
\be\label{gnabh}
\go^{kl}\no_ih^{(r)}_{kl}=0,
\ee
and
\be
\label{hnabh}
h_{~i}^{(r)\, l}\nabla_kh^{(s)}_{jl}=-h_{~j}^{(s)\, l}\nabla_kh^{(r)}_{il}
\ee
for all $r,s=0, 1,\ldots$. 
\elem
\bprf Equations (\ref{hprime}) follows from the fact that $h_i^{~j}$ squares to zero and is trace free.
Equation
 (\ref{nabh}) follows from
\[
\no_X\h(\e_i,\e_j)
\ =
\ X(\h(\e_i,\e_j))-\h(\no_X\e_i,\e_j)
-\h(\e_i,\no_X\e_j)\\
\ = \ 0
\]
unless $\e_i$ or $\e_j$ is equal to $\e_\aa$.

The last equation
(\ref{hnabh})
follows from (\ref{hprime}), 
\[
0=
\nabla_k\Big(
h_{~i}^{(r)\, l}h^{(s)}_{jl}\Big)
=
h_{~i}^{(r)\, l}\nabla_kh^{(s)}_{jl}+h_{~j}^{(s)\, l}\nabla_kh^{(r)}_{il}.
\]
by the  Leibniz rule.
\eprf

\subsection{The Ricci tensor of a  $2$-step nilpotent pertubation}

In the following, for a semi-Riemannian metric  $\ggo$ we will consider perturbations by a $2$-step nilpotent, symmetric bilinear form $\h$ depending on a parameter $\rho$.
By the results in the previous section we can write this perturbation as
\be\label{gnilp1} \gg=\ggo+\h,\quad\text{where}\ \ \ 
\h= h_{\aa\cc} \Theta^\aa\circ \Theta^\cc \ \ \text{ and }\ \ 
\ggo=\go_{a\bb}\Theta^a\Theta^\bb+\go_{AB}\Theta^A\Theta^B
\ee 
where we use the conventions in 
Section \ref{conventions} and with smooth functions $h_{\aa \cc }=h_{\aa \cc}(\rho,x)$ with $h_{\aa\cc}=h_{\cc\aa}$.
The metric coefficients of $\gg$ are 
 $g_{ij}(\rho,x):=\go_{ij}(x)+h_{ij}(\rho,x)$.
The perturbed metric $\gg$ has the property that the inverse of $\gg$ is linear in the perturbation $\h$, i.e., if $g^{ij}$ are the coefficient of the inverse of $g_{ij}$ then 
\begin{equation}\label{invmetric}g^{ij}=\go^{ij}-h^{ij}.\end{equation}
In the following we will raise the indices with $\go_{ij}$.
First we observe:
\begin{proposition}
\label{fglem}
Let $\ggo$ be a semi-Riemannan metric and $\h$ a $\rho$-dependent, $2$-step nilpotent symmetric bilinear form.
Then for the  metric
  \begin{equation}\label{preamb}\widetilde{\gg}=2\der(\rho t)\der t+t^2(\ggo+\h)\end{equation}
  the possibly non-vanishing components of the Ricci tensor are given by
  \begin{eqnarray}
\go^{kl}\no_k\dot h_{il}&\text{ and }&
\label{fg1nilp}
\rho \ddot h_{ij}-(\frac{n}{2}-1)\dot h_{ij}+R_{ij}.
\end{eqnarray}
Here the dots denote the $\rho$ derivatives of the $h_{ij}$'s and $R_{ij }$ are the components of the Ricci tensor of $\gg=\ggo+\h$. 
\end{proposition}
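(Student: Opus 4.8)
The plan is to substitute $g_{ij}=\go_{ij}+h_{ij}$ into the explicit formulas for the components of $Ric(\widetilde{\gg})$. Because $\widetilde{\gg}$ has the ambient normal form (\ref{fgmetric}), the only (possibly) non-vanishing components of $Ric(\widetilde{\gg})$ are the three families given by the left-hand sides of (\ref{fge1}), (\ref{fge2}), (\ref{fge3}) (with $R_{ij}$ the Ricci tensor of $g=\go+h$), so it suffices to simplify those three expressions using Lemma~\ref{lem1}. The structural inputs are: $\dot g_{ij}=\dot h_{ij}$ and $\ddot g_{ij}=\ddot h_{ij}$ since $\ggo$ is $\rho$-independent; $g^{ij}=\go^{ij}-h^{ij}$ by (\ref{invmetric}); by (\ref{hnilp}) and the index conventions (\ref{index}), each $h^{(r)}_{ij}$ is supported on barred index pairs while $h^{ij}$ is supported on unbarred ones, so any contraction of an upper $h$-index against a lower $h$-index vanishes; and the Lemma~\ref{lem1} identities $\go^{kl}h^{(r)}_{kl}=0$, $h^{(r)}_{ik}h^{(s)k}_{~\ j}=0$, $\no_k h^{(r)}_{ij}=0$ unless $i$ or $j$ is barred, and $\go^{kl}\no_i h^{(r)}_{kl}=0$.

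With these, the scalar equation (\ref{fge3}) and the ``middle'' terms of (\ref{fge1}) are killed by direct bookkeeping: $g^{kl}\ddot h_{kl}=0$, $g^{kl}g^{pq}\dot h_{pk}\dot h_{ql}$ collapses (after replacing the inverse metrics by $\go$) to $\dot h^{q}_{~k}\dot h_{q}^{~k}=0$, and $g^{kl}\dot h_{ik}\dot h_{jl}=0$ and $g^{kl}\dot h_{kl}=0$; hence the left-hand side of (\ref{fge3}) vanishes identically while (\ref{fge1}) reduces to $\rho\ddot h_{ij}-(\tfrac n2-1)\dot h_{ij}+R_{ij}$. In (\ref{fge2}) the term $g^{kl}\nabla_i\dot h_{kl}=\nabla_i(g^{kl}\dot h_{kl})$ vanishes since $\nabla$ is metric for $g$ and $g^{kl}\dot h_{kl}=0$.

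The one delicate step is showing that the surviving term of (\ref{fge2}), $g^{kl}\nabla_k\dot h_{il}$, equals $\go^{kl}\no_k\dot h_{il}$. I would rewrite it as a divergence, $g^{kl}\nabla_k\dot h_{il}=\nabla_k(g^{kl}\dot h_{il})=\nabla_k(\dot h_i^{~k})$ (using $\nabla_m g^{kl}=0$ and $g^{kl}\dot h_{il}=\go^{kl}\dot h_{il}$), and then expand $\nabla=\no+C$ with $C$ as in (\ref{c}). The trace $C^k_{~km}=-\tfrac12 g^{kl}\no_m h_{kl}$ vanishes --- its $\go$-part by (\ref{gnabh}), its $h^{kl}$-part by the support property via (\ref{nabh}) --- so $\nabla_k\dot h_i^{~k}=\no_k\dot h_i^{~k}-C^m_{~ki}\dot h_m^{~k}$, and the remaining correction $C^m_{~ki}\dot h_m^{~k}$ vanishes too: since $\dot h_m^{~k}$ is supported on a barred $m$ and an unbarred $k$, only the components of $C^m_{~ki}$ with $m$ barred and $k$ unbarred enter, for those the third term of (\ref{c}) drops out by (\ref{nabh}), and what remains becomes (after raising indices) $\tfrac12\,\dot h^{nk}(\no_n h_{ki}-\no_k h_{in})$, a contraction of the symmetric $\dot h^{nk}$ against a tensor antisymmetric in $n,k$. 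Since $\no_k\dot h_i^{~k}=\go^{kl}\no_k\dot h_{il}$, this yields the stated mixed component. I expect the index bookkeeping in this last step --- tracking which ranges force components of $C$ or derivatives of $h$ to drop out --- to be the only real obstacle, and it is routine once the support statements of Lemma~\ref{lem1} are applied systematically; combining the three simplified components completes the proof.
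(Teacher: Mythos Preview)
Your proof is correct and follows the same approach as the paper: substitute $g=\go+h$ into the three Fefferman--Graham components (\ref{fge1})--(\ref{fge3}), then use Lemma~\ref{lem1} together with the expansion $\nabla=\no+C$ to kill the nonlinear correction terms. Your treatment of (\ref{fge2}) is organised slightly more efficiently than the paper's --- you first pull $g^{kl}$ inside $\nabla_k$ and dispose of the $\nabla_i$-term as $\nabla_i(g^{kl}\dot h_{kl})=0$, then handle the divergence $\nabla_k\dot h_i^{~k}$ as a single (1,1)-tensor, whereas the paper expands $\nabla=\no+C$ on the (0,2)-tensor $\dot h_{il}$ from the outset and tracks more terms --- but the underlying computations are the same.
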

\bprf
The components of the Ricci tensor of  $\widetilde{\gg} $ are  given by the left-hand sides of the  Fefferman-Graham equations (\ref{fge1}, \ref{fge2}, \ref{fge3}).
 Lemma \ref{lem1} shows that the term in the third Fefferman-Graham equation (\ref{fge3}) is zero.

In order to analyse the term in the second Fefferman-Graham equation (\ref{fge2}), we use formula (\ref{nab})  for  expressing $\nabla$ in terms of $\no$ and the tensor $C^k_{ij}=C^k_{ji}$, i.e.,
\begin{eqnarray}
\nonumber
g^{kl}\left(\nabla_k\dot g_{il} -\nabla_i\dot g_{kl}\right)
\nonumber
&=&(\go^{kl}-h^{kl})(\no_k\dot h_{il}-\no_i\dot h_{kl}+C^p_{~kl}\dot h_{ip}-C^p_{~il}\dot h_{pk})
\\
\label{hc}&=&
\go^{kl}(\no_k\dot h_{il}+C^p_{~kl}\dot h_{ip}-C^p_{~il}\dot h_{pk})
-h^{kl}\no_k\dot h_{il}
-h^{kl}C^p_{~kl}\dot h_{ip}
\end{eqnarray}
because $\h$ is trace free and because of Lemma \ref{lem1}. For $C^k_{~ij}$, 
the formula  (\ref{c}) reduces to 
\be\label{ch}
C^k_{~ij}
=
\frac{1}{2}(\go^{kl}-h^{kl})(\no_lh_{ij}-\no_ih_{jl}-\no_jh_{il})
\ee  
again by Lemma \ref{lem1}. Hence
\[
\dot h_{kp}C^p_{~ij}
=
\frac{1}{2}\dot h_{k}^{~l}(\no_lh_{ij}-\no_ih_{jl}-\no_jh_{il})
\]
Therefore the last term in (\ref{hc}) becomes
\begin{eqnarray*}
2h^{kl}\dot h_{ip}C^p_{~kl}
&=&
h^{kl}\dot h_i^{~p}(\nabla_ph_{kl}-\nabla_kh_{pl}- \nabla_lh_{pk})
\\
&=&
-h^{kl}( h_{kl}
\nabla_p\dot h_i^{~p}
-
 h_{pl}\nabla_k\dot h_i^{~p}- h_{pk}\nabla_k\dot h_k^{~p})\ 
=\ 0
\end{eqnarray*}
because of (\ref{hnabh}) in Lemma \ref{lem1}. Similarly, the remaining term in (\ref{hc}) is
\[
\go^{kl}(C^p_{~kl}\dot h_{ip}-C^p_{~il}\dot h_{pk})
-h^{kl}\no_k\dot h_{il}
\ =\ 
-\dot h_i^{~l}\no_kh_l^{~k}
-h^{kl}\no_k\dot h_{il}+\tfrac{1}{2}
\dot h_{kl}\nabla_ih^{kl}
=0.
\]
This verifies the formula for the terms in the second Fefferman-Graham equation.
The term in the first Fefferman-Graham equation (\ref{fge1}) is seen to be  equal to the second term in (\ref{fg1nilp}) by using Lemma \ref{lem1}.
\eprf
The lemma shows that, apart from the Ricci tensor of $\gg$, the Fefferman-Graham equations contain only  terms that are linear in $\h$. Thus, 
we now determine the Ricci tensor of a metric $\gg=\ggo+\h$ in terms of the Ricci tensor of $\ggo$ and of $\h$ using formula (\ref{ric}) and apply this to a  metric $\widetilde{\gg}=2\der(\rho t)\der t+t^2(\ggo+\h)$.
For this  we note that for a metric as in (\ref{gnilp1}) with inverse (\ref{invmetric}) 
the formula \eqref{ric} for the  Ricci tensor of $\gg$ contains terms up to fourth order in $\h$. 
Hence we observe:
\begin{proposition}\label{ricprop} Let $\ggo$ be a semi-Riemannian metric and $\h$ be a $2$-step nilpotent symmetric bilinear form.
The Ricci tensor $R_{ij}$ of $\gg=\ggo+\h$  is given by
\be\label{fullric}
R_{ij}=\Ro_{ij}+  \no^k\no_{(i}h_{j)k} - \frac{1}{2} \no^k\no_kh_{ij}
\ +Q^{(2)}_{ij}(\h)+Q^{(3)}_{ij}(\h)+Q^{(4)}_{ij}(\h)
\ee
in which we raise the indices with $\go_{ij}$ and where the $Q^{(r)}_{ij}(\h)$ are symmetric tensors that are of order $r=2,3,4$ in $h_{ij}$, and which are given explicitly in \eqref{q2}, \eqref{q3} and \eqref{q4} below. 
\end{proposition}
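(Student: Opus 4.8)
The plan is to take formula \eqref{ric} for the Ricci tensor of $\gg$ in terms of $\no$, $\Ro$ and the difference tensor $C^k_{~ij}$, insert the explicit expression \eqref{ch} for $C^k_{~ij}$ adapted to the $2$-step nilpotent perturbation, and expand everything as a polynomial in $h_{ij}$, keeping track of the order in $\h$ of each term. From Lemma~\ref{lem1} we know $C^k_{~ij}$ already contains a linear piece $\tfrac12\go^{kl}(\no_lh_{ij}-\no_ih_{jl}-\no_jh_{il})$ and a quadratic piece $-\tfrac12 h^{kl}(\no_lh_{ij}-\no_ih_{jl}-\no_jh_{il})$; since \eqref{ric} involves $\no C$ and $C\cdot C$, a priori one gets contributions of orders $1,2$ (from $\no C$) and $2,3,4$ (from $C\cdot C$). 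So the first step is purely bookkeeping: organize the expansion of \eqref{ric} into homogeneous pieces $R_{ij}=\Ro_{ij}+R^{(1)}_{ij}(\h)+Q^{(2)}_{ij}(\h)+Q^{(3)}_{ij}(\h)+Q^{(4)}_{ij}(\h)$.

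Next I would identify the linear term. From $\no_i C^k_{~kj}$ and $\no_k C^k_{~ij}$ with the linear part of $C$, and using the trace-freeness $\go^{kl}h_{kl}=0$ together with $\no^k h_{kl}$ appearing symmetrically, the linear-in-$\h$ contribution should collapse to $\no^k\no_{(i}h_{j)k}-\tfrac12\no^k\no_k h_{ij}$ exactly as in \eqref{fullric}; this is the standard linearisation of Ricci curvature (Lichnerowicz operator) and the computation is routine once one uses \eqref{nabh} and \eqref{gnabh} from Lemma~\ref{lem1} to discard the divergence/trace terms that would normally appear. I would then collect the remaining terms by order: the genuinely quadratic terms $Q^{(2)}$ come from (a) $\no$ hitting the linear part of $C$ that carries an $h^{kl}$-type contraction implicitly via index raising with $\go$ — actually no, raising is with $\go$, so the only quadratic sources are $\no$ applied to the quadratic part of $C$ and the product of two linear $C$'s; $Q^{(3)}$ comes from one linear times one quadratic $C$; and $Q^{(4)}$ from the product of two quadratic $C$'s. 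In each case I would invoke Lemma~\ref{lem1} repeatedly — in particular \eqref{hprime} ($h^{(r)}_{ik}h^{(s)k}_{~~j}=0$), \eqref{nabh}, and \eqref{hnabh} — to simplify and to confirm many would-be terms vanish, and record the surviving expressions as the formulas \eqref{q2}, \eqref{q3}, \eqref{q4} that the proposition refers to.

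The main obstacle I expect is the combinatorics and sign/index management in the quartic term $Q^{(4)}$: it comes from $C^q_{~ij}C^k_{~kq}-C^q_{~jk}C^k_{~iq}$ with both $C$'s replaced by their quadratic parts, producing a product of four $h$'s with two covariant derivatives distributed among them, and one must use the nilpotency identities and \eqref{hnabh} to see which contractions actually survive rather than vanish. A secondary subtlety is making sure the symmetrisation in $i,j$ is handled correctly (the raw expansion of \eqref{ric} is not manifestly symmetric term by term, though the total is), and that index raising is consistently done with $\go$ and not $\gg$, which is exactly what makes $g^{ij}=\go^{ij}-h^{ij}$ truncate so cleanly via \eqref{invmetric}. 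Once the expansion is organized and the vanishing lemmas applied, writing down \eqref{q2}--\eqref{q4} is mechanical; the proof of the proposition then amounts to asserting that \eqref{fullric} is precisely this reorganization of \eqref{ric}, with the explicit $Q^{(r)}$ displayed below.
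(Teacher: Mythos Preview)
Your approach is correct and essentially the same as the paper's: expand \eqref{ric} using \eqref{ch} and \eqref{invmetric}, then sort by order in $\h$ while invoking Lemma~\ref{lem1} to kill terms. The one simplification you miss, which the paper uses at the very first step, is that the trace $C^k_{~ki}=-\tfrac12(\go^{kl}-h^{kl})\no_ih_{kl}$ vanishes identically by \eqref{gnabh} and \eqref{hnabh}; this eliminates the $\no_iC^k_{~kj}$ and $C^p_{~ij}C^k_{~kp}$ terms in \eqref{ric} outright, so the whole computation reduces to expanding $-\no_kC^k_{~ij}-C^p_{~jk}C^k_{~ip}$, and in particular your ``main obstacle'' $Q^{(4)}$ comes only from the single product $-C^p_{~jk}C^k_{~ip}$ with both factors quadratic.
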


Now we are going to compute the $Q_{ij}^{(k)}(\h)$'s
by using equation \eqref{ric} for the Ricci tensor of $\gg=\ggo+\h$. 
First we not that 
the formula  \eqref{ch} for  $C^k_{~ij}$  and  Lemma \ref{lem1} implies
\[
C^k_{~ki}=-\tfrac{1}{2}(\go^{kl}-h^{kl})\no_ih_{kl}=0.
\]
Hence \eqref{ric} simplifies to
\begin{equation}\label{ricnilp}
R_{ij}=\Ro_{ij} - \no_{k}C_{~ij}^k  - C_{~jk}^pC_{~ip}^k.
\end{equation}
We start with the terms of fourth order in $\h$:
by \eqref{hnabh} in Lemma \ref{lem1} we get
\be
\begin{array}{rcl}
Q^{(4)}_{ij}(\h)\label{q4}&=&
-\tfrac{1}{4}h^{pq}h^{kl}(\no_qh_{jk}-\no_jh_{kq}-\no_kh_{jq})(\no_lh_{ip}-\no_ih_{lp}-\no_ph_{il})\\
&=& 
-\tfrac{1}{4}h^{pq}h^{kl}(\no_qh_{jk}-\no_kh_{jq})(\no_lh_{ip}-\no_ph_{il})\\
&=& 
\tfrac{1}{4}h^{ab}h^{cd}(\no_ch_{jb}-\no_bh_{jc})(\no_\der h_{ia}-\no_ah_{id})\\
&=&
\tfrac{1}{4}h^{ab}h^{cd}(\h( \e_j, [\e_c,\e_b]))
(\h( \e_i, [\e_d,\e_a])),
\end{array}
\ee
where, for the last equality, we have written the summation in terms of the frame field $\e_i$ and used that $h_{ia}=0$.
Note that $Q^{(4)}_{ij}(\h)=0$  if  $[\e_a,\e_b]\in 
 \mathcal K$.

Now we compute the third order terms and  because of \eqref{nabh} in Lemma \ref{lem1} we obtain
\begin{equation}
\begin{array}{rcl}
Q^{(3)}_{ij}(\h)\label{q3}
&=& 
-\tfrac{1}{2}h^{kl}\go^{pq}\left(
\no_ih_{kp}\no_jh_{lq}
-(\no_ph_{ik} -\no_kh_{ip})(\no_qh_{jl} -\no_lh_{jq})
\right)
\\
&=&
-\tfrac{1}{2}h^{ab}\go^{\cc d}\left(
\no_ih_{a\cc}\no_jh_{bd}
-(\no_\cc h_{ia} -\no_ah_{i\cc })(\no_\der h_{jb} -\no_bh_{jd})
\right)
\\
&&-\tfrac{1}{2}h^{ab}\go^{C D}\left(
\no_ih_{aC}\no_jh_{bD}
-(\no_C h_{ia} -\no_ah_{iC })(\no_Dh_{jb} -\no_bh_{jD})
\right)
\\
&=&
\tfrac{1}{2}h^{ab}\go^{\cc d}
(\no_\cc h_{ia} -\no_ah_{i\cc })(\no_\der h_{jb} -\no_bh_{jd})
\\
&&+
\tfrac{1}{2}h^{ab}\go^{C D}\left(
(\no_C h_{ia} -\no_ah_{iC })(\no_Dh_{jb} -\no_bh_{jD})
\right)
\\
&=&
\tfrac{1}{2}h^{ab}\left( 
\go^{\cc d}
(\no_\cc h_{ia} -\no_ah_{i\cc })
\h( \e_j, [\e_d,\e_b])\right)
\\&&{}
+\tfrac{1}{2}h^{ab}\left( 
\go^{C D}(
(\no_C h_{ia} -\no_ah_{iC })(\h(\e_j,[\e_b,\e_D])
\right).
\end{array}\end{equation}
Clearly, this  vanishes if $\left[ \e_a,\e_b \right] \in  \mathcal K$ and $\left[\e_a,\e_B\right]\in  \mathcal K$, and in particular if $ \mathcal K$ is involutive.

Finally, we turn to the second order terms. They are given as
\be
\begin{array}{rcl}
\label{q2}
Q^{(2)}_{ij}(\h)&=&
 \no_k  h^{kl}
\left( \tfrac{1}{2}\no_lh_{ij}
-
\no_{(i}h_{j)l}\right)
+
 h^{kl}\left( \tfrac{1}{2} \no_k\no_lh_{ij}
- \no_k\no_{(i}h_{j)l}\right)
\\
&&{}
-
\tfrac{1}{4} \no_ih^{kl}\no_jh_{kl}
-
\tfrac{1}{4}\left(
\no^kh_{i}^{~l}-\no^lh_{i}^{~k}\right)
\left(
\no_lh_{jk}-\no_kh_{jl}\right).
\end{array}\ee
First we rewrite the last term as
\[
\tfrac{1}{4}\left(
\no^kh_{i}^{~l}-\no^lh_{i}^{~k}\right)
\left(
\no_lh_{jk}-\no_kh_{jl}\right)
=
\no_{[k}h_{l]i}\no^kh_j^{\ l}
=
\no_{[k}h_{l]j}\no^kh_i^{\ l}.
\]
Next, we analyse the  
 term $ h^{kl}
 \no_k\no_{(i}h_{j)l}$ using the divergence of $\h$, Lemma \ref{lem1}, the curvature and the fact that $\h$ is $2$-step nilpotent:
 \begin{eqnarray*}
 h^{kl}
 \no_k\no_{i}h_{jl}
 &=& 
 - h_{jl}\no_k\no_ih^{kl} -\no_kh_{lj}\no_ih^{kl} -\no_kh^{kl}\no_ih_{jl}
 \\
  &=& 
 - h_{jl}\left(\no_i\no_kh^{kl} +h^{pl} \Ro_{ki\ p}^{\ \ k} + h^{kp} \Ro_{ki\ p}^{\ \  l}\right)
  -\no_kh_{lj}\no_ih^{kl} -\no_kh^{kl}\no_ih_{jl}
  \\
  &=& 
 - h_{jl}\no_i\no_kh^{kl}   - h_{j}^{\ l} h^{kp} \Ro_{kilp}
  -\no_kh_{lj}\no_ih^{kl} -\no_kh^{kl}\no_ih_{jl}
 \end{eqnarray*}
 Hence, we obtain 
\begin{eqnarray*} \begin{array}{rcl}
Q^{(2)}_{ij}(\h)&=&
 \tfrac{1}{2} \no_k  h^{kl}
\no_lh_{ij}
+
h_{l(i}\no_{j)}\no_kh^{kl}
+
\tfrac{1}{2} h^{kl}  \no_k\no_lh_{ij}
- h^{kp} h_{\ (i}^{l}  \Ro_{j)klp}
  +\no_kh_{l(i}\no_{j)}h^{kl}
\\
&&{}
-
\tfrac{1}{4} \no_ih^{kl}\no_jh_{kl}
-\no_{[k}h_{l]i}\no^kh_j^{\ l}
\end{array}\end{eqnarray*}
Therefore, if $\h$ is divergence free, i.e. $\nabla_kh^{kl}=0$, we get formula \eqref{q2a} for 
$Q^{(2)}_{ij}(\h)$.

\begin{proposition}\label{linprop0}
Let $\ggo$ be a semi-Riemannian metric 
and $\h$ be a $2$-step nilpotent symmetric bilinear form such that there is a totally null distribution $\cal N$ with  $\Im(\h)\subset \cal N$ and $\cal K=\cal N^\perp$ involutive.
Then
the Ricci tensor $R_{ij}$ of $\gg=\ggo+\h$ is at most quadratic in $\h$, i.e., the terms $Q^{(3)}_{ij}(\h)$ and $Q^{(4)}_{ij}(\h)$ in \eqref{fullric} vanish.
If we assume in addition that $\h$ is divergence free, then 
   \begin{equation}
\begin{array}{rcl}
\label{q2a}
Q^{(2)}_{ij}(\h)=
\tfrac{1}{2} h^{kl}  \no_k\no_lh_{ij}
- h^{kp} h_{\ (i}^{l}  \Ro_{j)klp}
  +\no_kh_{l(i}\no_{j)}h^{kl}
-
\tfrac{1}{4} \no_ih^{kl}\no_jh_{kl}
-
\no_{[k}h_{l]i}\no^kh_j^{\ l}.
\end{array}\end{equation}
\end{proposition}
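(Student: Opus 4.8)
The plan is to prove Proposition~\ref{linprop0} by combining the explicit formulas for $Q^{(3)}_{ij}(\h)$ and $Q^{(4)}_{ij}(\h)$ derived in \eqref{q3} and \eqref{q4} with the hypotheses on $\cal N$ and $\h$. First I would recall that, by the conventions of Section~\ref{conventions} and Lemma~\ref{lem1}, working in the adapted frame \eqref{frame} we have $h_{ia}=0$ (indices $a,b,c,\dots$ label directions in $\cal N$, $\aa,\bb,\cc,\dots$ its complement in $\cal K=\cal N^\perp$), and every occurrence of $h^{ab}$ is contracted against factors of the form $\h(\e_\bullet,[\e_\bullet,\e_\bullet])$ where the first slot of $\h$ takes a frame vector spanning $\cal N$. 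Since $\h$ vanishes unless both its arguments lie in $\cal N$, such a term is non-zero only if the relevant bracket $[\e_a,\e_b]$ or $[\e_a,\e_B]$ has a component lying outside $\cal K$.

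Next I would invoke involutivity of $\cal K=\cal N^\perp$. The assumption $\cal N\subset\cal K$ together with $[\Gamma(\cal K),\Gamma(\cal K)]\subset\Gamma(\cal K)$ immediately gives $[\e_a,\e_b]\in\cal K$ for $a,b\le p$ and $[\e_a,\e_B]\in\cal K$ for $a\le p$, $p+1\le B\le n-p$. But $\h$ has image in $\cal N$, so $\h(\e_i, Y)=\ggo(\h^\sharp\e_i,Y)$ with $\h^\sharp\e_i\in\cal N$; and $\cal N$ is totally null and contained in $\cal K=\cal N^\perp$, so $\ggo(\cal N,\cal K)=0$. Hence $\h(\e_j,[\e_c,\e_b])=\ggo(\h^\sharp\e_j,[\e_c,\e_b])=0$ because $[\e_c,\e_b]\in\cal K$ and $\h^\sharp\e_j\in\cal N\subset\cal N^\perp$, and similarly $\h(\e_j,[\e_b,\e_D])=0$. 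Substituting these vanishings into the final lines of \eqref{q3} and \eqref{q4} shows $Q^{(3)}_{ij}(\h)=Q^{(4)}_{ij}(\h)=0$, which is precisely the claim that $R_{ij}$ is at most quadratic in $\h$. (Equivalently, one can argue directly that every term of \eqref{q3} and \eqref{q4} carries a factor $\no_ch_{jb}-\no_bh_{jc}$ or $\no_Ch_{ia}-\no_ah_{iC}$ which, rewritten as $\h(\e_j,[\e_c,\e_b])$ via the torsion-free condition and $h_{ia}=0$ as in the derivation preceding the proposition, vanishes by involutivity.)

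For the formula \eqref{q2a} I would simply point out that this has already been established in the computation immediately preceding the statement: starting from the general expression \eqref{q2} for $Q^{(2)}_{ij}(\h)$, rewriting the antisymmetrised term, and commuting covariant derivatives through the curvature identity $\no_k\no_i h^{kl}=\no_i\no_k h^{kl}+h^{pl}\Ro_{ki\ p}^{\ \ k}+h^{kp}\Ro_{ki\ p}^{\ \ l}$ while discarding the term $h_{jl}h^{kp}\Ro_{ki\ p}^{\ \ k}$ (which vanishes since $\h$ is $2$-step nilpotent, $h_{jl}h^{pl}=0$). Imposing $\no_kh^{kl}=0$ kills the first two terms of the resulting expression, leaving exactly \eqref{q2a}. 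So the proof of the proposition is essentially bookkeeping: collect the already-derived identities and feed in the two new hypotheses.

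The main obstacle, such as it is, is purely notational: keeping the index ranges straight in the adapted frame and making sure that every bracket appearing in \eqref{q3}, \eqref{q4} genuinely has both "feet" in a configuration where involutivity of $\cal K$ applies --- in particular checking that no term survives with a bracket $[\e_A,\e_B]$ of two middle-range vectors (which involutivity of $\cal K$ would not obviously control) by observing from \eqref{q3}--\eqref{q4} that all brackets that occur are of the form $[\e_a,\e_b]$ or $[\e_a,\e_B]$, i.e. always have at least one leg in $\cal N$, and for those $\cal N\subset\cal K$ plus involutivity of $\cal K$ suffices. Once this is verified, no genuine analytic difficulty remains.
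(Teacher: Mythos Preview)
Your proposal is correct and follows essentially the same approach as the paper: the computations of $Q^{(4)}$, $Q^{(3)}$, and $Q^{(2)}$ are carried out in the text immediately preceding the proposition, and the proof consists of reading off from \eqref{q3}--\eqref{q4} that the surviving bracket factors $\h(\e_j,[\e_c,\e_b])$ and $\h(\e_j,[\e_b,\e_D])$ vanish once $\cal K$ is involutive, together with the already-derived simplification of \eqref{q2} under the divergence-free assumption. One small notational slip: the barred indices $\aa,\bb,\cc$ label the directions \emph{transverse} to $\cal K$ (range $n-p+1,\ldots,n$), not a complement of $\cal N$ inside $\cal K$; the middle range is $A,B,C$.
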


We can apply these results to the  metric $\widetilde{\gg}=2\der(\rho t)\der t+t^2\gg$ as defined in \eqref{preamb}: Under the assumption that $\cal K$ is involutive and that $\h$ is divergence free we can apply Proposition~\ref{fglem}. Since $\dot \h$ is divergence free if $\h$ is divergence free, it   implies that $\widetilde{\gg}$ is Ricci-flat if and only if
\begin{eqnarray}
\label{fg1nilp2}
\rho \ddot h_{ij}-(\frac{n}{2}-1)\dot h_{ij}
+  \no^k\no_{(i}h_{j)k} - \frac{1}{2} \no^k\no_kh_{ij}
+
\Ro_{ij}+Q_{ij}^{(2)}(\h)&=&0,
\end{eqnarray}
where 
$Q_{ij}^{(2)}(\h)$ is given as in \eqref{q2a}.
Moreover,  that $\h$ is divergence free also  allows us to simplify 
the term $\no^k\no_{(i}h_{j)k}$. In fact,  if $\no^k h_{ik}=0$  we get
 \begin{equation}\label{nknihkj}
 \no^k\no_{i}h_{jk}\ =\ \Ro^{k\ \ l}_{~ij~}h_{kl}+ \Ro^{k\ \ l}_{~ik~}h_{jl}+\no_i\no^k h_{jk}
\ =\ 
 \Ro^{k\ \ l}_{~ij~}h_{kl}+ \Ro^{\ l}_{i}h_{jl}.
 \end{equation}
 This shows that we can eliminate all $\no_i$ derivatives from this term to obtain
 \begin{corollary}
 \label{lincor}
 Let $\ggo$ be a semi-Riemannian metric 
and $\h$ be a $2$-step nilpotent symmetric bilinear form such that there is an involutive distribution $\cal K$ such that $\Im(\h)\subset \cal N=\cal K^\perp\subset \cal K$.
  Then 
the metric $\widetilde{\gg}=2\der(\rho t)\der t+t^2(\ggo+\h)$ is Ricci-flat if the 
perturbation 
 $\h$ is divergence free and 
\begin{eqnarray}
\label{fg1nilp3}
\rho \ddot h_{ij}-(\frac{n}{2}-1)\dot h_{ij} - \frac{1}{2}\Bo h_{ij}
+  
 \Ro^{k\ \ l}_{~ij~}h_{kl}+ \Ro^{k}_{~(i}h_{j)k}
+
\Ro_{ij}+Q_{ij}^{(2)}(\h)
&=&0,
\end{eqnarray}
where $Q_{ij}^{(2)}(\h)$ is given in \eqref{q2a} and $\Bo h_{ij}=\no^k\no_kh_{ij}$.

 \end{corollary}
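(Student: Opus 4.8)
I would derive the corollary by assembling Proposition~\ref{fglem}, Proposition~\ref{linprop0}, and the divergence identity~\eqref{nknihkj}; essentially no new computation is needed. The plan is: first, reduce Ricci-flatness of $\widetilde{\gg}$ to a single tensorial equation; second, substitute the expansion of the Ricci tensor of $\gg=\ggo+\h$ furnished by Proposition~\ref{linprop0}; third, eliminate the remaining non-Laplacian second covariant derivatives using~\eqref{nknihkj}.

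For the first step I would invoke Proposition~\ref{fglem}: since $\Im(\h)\subset\cal N=\cal K^\perp$ is totally null and $\h$ is $2$-step nilpotent, the only possibly non-vanishing components of $Ric(\widetilde{\gg})$ are $\go^{kl}\no_k\dot h_{il}$ and $\rho\ddot h_{ij}-(\tfrac{n}{2}-1)\dot h_{ij}+R_{ij}$, with $R_{ij}$ the Ricci tensor of $\gg=\ggo+\h$. Since $\no$ and the contractions are those of the $\rho$-independent metric $\ggo$, the operator $\del_\rho$ commutes with the divergence $X\mapsto\no^kX_{ik}$, so the hypothesis $\no^kh_{ik}=0$ gives $\go^{kl}\no_k\dot h_{il}=\del_\rho(\no^kh_{ik})=0$. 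Hence $\widetilde{\gg}$ is Ricci-flat if and only if $\rho\ddot h_{ij}-(\tfrac{n}{2}-1)\dot h_{ij}+R_{ij}=0$.

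For the second step I note that the hypotheses that $\cal K=\cal N^\perp$ be involutive and that $\h$ be divergence free are exactly those of Proposition~\ref{linprop0}, which therefore gives $Q^{(3)}_{ij}(\h)=Q^{(4)}_{ij}(\h)=0$ and $R_{ij}=\Ro_{ij}+\no^k\no_{(i}h_{j)k}-\tfrac{1}{2}\Bo h_{ij}+Q^{(2)}_{ij}(\h)$ with $Q^{(2)}_{ij}(\h)$ as in~\eqref{q2a} and $\Bo h_{ij}=\no^k\no_kh_{ij}$. Substituting this into the equation from the first step reproduces~\eqref{fg1nilp2}. Then, for the third step, using $\no^kh_{ik}=0$ in~\eqref{nknihkj} gives $\no^k\no_ih_{jk}=\Ro^{k\ \ l}_{~ij~}h_{kl}+\Ro^{\ l}_{i}h_{jl}$; as the curvature term $\Ro^{k\ \ l}_{~ij~}h_{kl}$ is already symmetric in $i,j$ by the pair symmetry of $\Ro$ together with the symmetry of $\h$, symmetrising over $i,j$ yields $\no^k\no_{(i}h_{j)k}=\Ro^{k\ \ l}_{~ij~}h_{kl}+\Ro^{k}_{~(i}h_{j)k}$, and inserting this into~\eqref{fg1nilp2} turns it into~\eqref{fg1nilp3}. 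Since under the standing assumption $\no^kh_{ik}=0$ each of these steps is an equivalence, this argument would in fact also show that~\eqref{fg1nilp3} is necessary, not merely sufficient, for Ricci-flatness.

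I do not anticipate any real obstacle. The only points I expect to require care are the interchange of $\del_\rho$ with the fixed connection $\no$ and the fixed $\ggo$-contractions in the first step (which is what makes the component of $Ric(\widetilde{\gg})$ coming from the second Fefferman--Graham equation~\eqref{fge2} disappear), and the elementary symmetrisation bookkeeping at the end needed to verify that the mixed second-derivative term $\no^k\no_{(i}h_{j)k}$ contributes precisely the two curvature terms appearing in~\eqref{fg1nilp3}.
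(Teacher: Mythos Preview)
Your proposal is correct and follows essentially the same route as the paper: invoke Proposition~\ref{fglem}, kill the divergence component via $\no^k\dot h_{ik}=\del_\rho(\no^k h_{ik})=0$, plug in the expansion of $R_{ij}$ from Proposition~\ref{linprop0} to reach~\eqref{fg1nilp2}, and then replace $\no^k\no_{(i}h_{j)k}$ by the curvature terms using~\eqref{nknihkj}. Your extra remark that under the divergence-free hypothesis the condition~\eqref{fg1nilp3} is in fact equivalent to Ricci-flatness is also in line with the paper's ``if and only if'' phrasing just before the corollary.
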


Now we are looking for  geometric conditions such that $Q^{(2)}_{ij}(\h)$ simplifies further and perhaps vanishes. 
 In fact we show:

\begin{theorem}\label{linprop}
Let $\ggo$ be a semi-Riemannian metric 
and $\h$ be a divergence free, $2$-step nilpotent symmetric bilinear form. 
{If there is an involutive distribution $\cal K$ with $\Im(\h)\subset\cal N= \cal K^\perp\subset \cal K$  and  
\begin{eqnarray}
\no_ZY&\in& \cal K^\perp,\qquad\text{ for all }Y,Z\in \cal K^\perp
\label{nabab}
\\
\label{nabxb}
\no_XY&\in& \cal K,\qquad\text{ for all }X\in TM,Y\in \cal K^\perp,\end{eqnarray}
then, 
\begin{equation}
\begin{array}{rcl}
\label{q2aa}
Q^{(2)}_{ij}(\h)=
\tfrac{1}{2} h^{kl}  \no_k\no_lh_{ij}
-
\no_{[k}h_{l]i}\no^kh_j^{\ l}.
\end{array}\end{equation}
Moreover, if  in addition 
\begin{equation}
\label{liehm}
\cal L_Y\h=0,\qquad \text{ for all }Y\in \cal K^\perp,
\end{equation}}
then
 $Q^{(2)}_{ij}$ is zero, i.e.,  the Ricci tensor of $\gg=\ggo+\h$ is linear in the perturbation $\h$,
 \be\label{fullric1}
R_{ij}=\Ro_{ij}+  \no^k\no_{(i}h_{j)k} - \frac{1}{2} \no^k\no_kh_{ij}.
\ee
\end{theorem}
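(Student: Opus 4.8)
Since $\cal K$ is involutive, $\h$ is $2$-step nilpotent with $\Im(\h)\subset\cal N=\cal K^\perp$, and $\h$ is divergence free, Proposition~\ref{linprop0} already gives $Q^{(3)}_{ij}(\h)=Q^{(4)}_{ij}(\h)=0$ together with the formula \eqref{q2a} for $Q^{(2)}_{ij}(\h)$, so the whole argument amounts to discarding terms in \eqref{q2a} by means of \eqref{nabab} and \eqref{nabxb}. I would work throughout in the adapted frame of Section~\ref{conventions}, where $h_{ij}=0$ unless $i,j\in\{n-p+1,\dots,n\}$, the nonzero components of $h^{ij}$ have both indices in $\{1,\dots,p\}$, and the nonzero components of the $(1,1)$-tensor $h_i^{\ l}$ have $l\in\{1,\dots,p\}$ (the block spanning $\cal N$).

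The plan is to first record three consequences of the hypotheses. \emph{(i)} A sharpening of \eqref{nabh}: $\no_kh_{li}=0$ as soon as $l\in\{1,\dots,p\}$. Indeed, writing $\e_l=\e_a\in\cal N$ and using $\h(\e_a,\cdot)=0$, one gets $(\no_{\e_k}\h)(\e_a,\e_i)=-\h(\no_k\e_a,\e_i)-\h(\e_a,\no_k\e_i)$, and both terms vanish because $\no_k\e_a\in\cal K=\ker\h^\sharp$ by \eqref{nabxb} and $\e_a\in\cal N\subset\ker\h^\sharp$. \emph{(ii)} A curvature identity: $\Ro_{jklp}=0$ whenever $k,l,p\in\{1,\dots,p\}$. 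First I would note the auxiliary inclusion $\no_ZW\in\cal K$ for $Z\in\cal N$, $W\in\cal K$, which follows from $\go(\no_ZW,\e_a)=Z(\go(W,\e_a))-\go(W,\no_Z\e_a)=0$ for $\e_a\in\cal N$, using $W\in\cal K=\cal N^\perp$ and $\no_Z\e_a\in\cal N$ from \eqref{nabab}. Then, for $k,l\in\{1,\dots,p\}$ and arbitrary $j$, each summand of $\Ro(\e_j,\e_k)\e_l=\no_j\no_k\e_l-\no_k\no_j\e_l-\no_{[\e_j,\e_k]}\e_l$ lies in $\cal K$: $\no_k\e_l\in\cal N$ by \eqref{nabab}, hence $\no_j(\no_k\e_l)\in\cal K$ by \eqref{nabxb}; $\no_j\e_l\in\cal K$ by \eqref{nabxb}, hence $\no_k(\no_j\e_l)\in\cal K$ by the auxiliary inclusion; and $\no_{[\e_j,\e_k]}\e_l\in\cal K$ again by \eqref{nabxb}. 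Since $\Ro_{jklp}$ is, up to a sign, $\go(\Ro(\e_j,\e_k)\e_l,\e_p)$ and $\e_p\in\cal N$ annihilates $\cal K$, the identity follows. \emph{(iii)} If in addition \eqref{liehm} holds, then $\no_Z\h=0$ for every $Z\in\cal N$; this is immediate from $(\cal L_{\e_a}\h)(Y,W)=(\no_{\e_a}\h)(Y,W)+\h(\no_Y\e_a,W)+\h(Y,\no_W\e_a)$ together with $\no_{\,\cdot\,}\e_a\in\cal K=\ker\h^\sharp$ from \eqref{nabxb}.

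With \emph{(i)}--\emph{(iii)} in hand the conclusions come out of inspecting \eqref{q2a}. For \eqref{q2aa}: the factors $h^{kl}$ and $h^{kp}$ force $k,l$ (resp.\ $k,p$) into $\{1,\dots,p\}$, and $h_{\ (i}^{l}$ forces $l$ into $\{1,\dots,p\}$, so $\no_ih^{kl}\no_jh_{kl}=0$ and $\no_kh_{l(i}\no_{j)}h^{kl}=0$ by \emph{(i)}, while $h^{kp}h_{\ (i}^{l}\Ro_{j)klp}=0$ by \emph{(ii)}; only the first and last terms of \eqref{q2a} survive, which is exactly \eqref{q2aa}. For the final statement assume also \eqref{liehm}. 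Then $h^{kl}\no_k\no_lh_{ij}=h^{ab}\no_a\no_bh_{ij}$, and viewing $\no_a\no_bh_{ij}$ as a component of $\no\no\h$ every term vanishes: $\no_{\e_b}\h=0$ by \emph{(iii)}, and $\no_{\e_a}\e_b\in\cal N$ by \eqref{nabab}, so $(\no\h)(\no_{\e_a}\e_b,\,\cdot\,,\,\cdot\,)=0$ as well. Finally, in $\no_{[k}h_{l]i}\no^kh_j^{\ l}$ the index $l$ is forced into $\{1,\dots,p\}$ by $h_j^{\ l}$, so $\no_kh_{li}=0$ by \emph{(i)} and $\no_lh_{ki}=0$ by \emph{(iii)}, and this term also vanishes. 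Hence $Q^{(2)}_{ij}(\h)=0$, and \eqref{fullric} collapses to \eqref{fullric1}, which is linear in $\h$.

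The genuinely delicate point is \emph{(ii)}: it is not a purely structural statement, and one has to chase all three terms of $\Ro(\e_j,\e_k)\e_l$ through \eqref{nabab}, \eqref{nabxb} and the derived inclusion $\no_Z\cal K\subset\cal K$ for $Z\in\cal N$, and moreover check that the relevant contraction in \eqref{q2a} only ever exposes curvature components with three slots in $\cal N$. Everything else reduces to routine bookkeeping with the support properties of $h_{ij}$, $h^{ij}$ and $h_i^{\ l}$ in the adapted frame.
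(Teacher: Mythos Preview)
Your proof is correct and follows essentially the same approach as the paper: both arguments work in the adapted frame, extract from \eqref{nabxb} the vanishing $\no_k h_{al}=0$, kill the curvature term in \eqref{q2a} using \eqref{nabab} and \eqref{nabxb}, and then identify $\no_Y\h=\cal L_Y\h$ for $Y\in\cal N$ to dispose of the remaining quadratic terms. The only notable difference is in the verification of the curvature identity (your point \emph{(ii)}): the paper computes $\Ro(\e_i,\e_a,\e_b,\e_c)$ directly as $-\go(\no_{\e_a}\e_b,\no_{\e_i}\e_c)+\go(\no_{\e_i}\e_b,\no_{\e_a}\e_c)$ and reads off the vanishing, whereas you chase each summand of $\Ro(\e_j,\e_k)\e_l$ through $\cal K$ via an auxiliary inclusion $\no_Z\cal K\subset\cal K$ for $Z\in\cal N$; both are valid and equally short.
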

\bprf We work in a basis $(\e_a,\e_A,\e_\aa)$ and use the conventions as in Section \ref{conventions}.
First note that assumption \eqref{nabxb} implies that 
terms of the form
$\no_kh_{al}$ or $\no_kh_{AB}$ are zero (where we use our index convention). 
This implies that
in formula \eqref{q2a} for $Q_{ij}^{(2)}(\h)$
the terms $\no_kh_{li}\no_{j}h^{kl}$ and 
$
 \no_ih^{kl}\no_jh_{kl}$ vanish.

 Next we look at the curvature term in formula \eqref{q2a} for $Q_{ij}^{(2)}(\h)$. Again by assumption \eqref{nabxb} we have
\[
\Ro(\e_i,\e_a,\e_b,\e_c)
=
-\go(\no_{\e_a}\e_b,\no_{\e_i}\e_c)+ \go(\no_{\e_i}\e_b,\no_{\e_a}\e_c)\]
which vanishes because of \eqref{nabab} and \eqref{nabxb}.
This proves the first statement.

To prove the second point, assumption \eqref{nabxb} gives
\begin{equation}\label{lie}
\no_{[k}h_{l]i}\no^kh_j^{\ l}=
-\tfrac{1}{2}\go^{\aa b}\go^{\cc d}\no_\der h_{\aa i}\no_b h_{\cc j}
+\tfrac{1}{2}\go^{AB}\go^{CD}( \h([\e_A,\e_C],\e_i)\no_Bh_{Dj}.
\end{equation}
Note that the last term in this formula is zero since $\cal K$ is involutive. 
On the other hand, we observe that for $Y\in \cal K^\perp$ 
\[
\no_Y\h=
\cal L_{Y}\h,\]
 because of  \eqref{nabxb}. This also shows that  in our situation $\cal L_{Y}\h$ is tensorial in $Y\in \cal K^\perp$. 
If we now assume that $\cal L_{Y}\h=0$ for all $Y\in \cal K^\perp$, then 
$\nabla_Y\h=0$ for all $Y\in \cal K^\perp$ and thus the remaining term
in \eqref{lie} vanishes, as well as the term $h^{kl}\no_k\no_lh_{ij}$. Consequently, $Q_{ij}^{(2)}(\h)$ is zero.
\eprf

Theorem \ref{linprop} gives another
\begin{corollary}
Let $\ggo$ be a semi-Riemannian metric 
and $\h$ be a $2$-step nilpotent symmetric bilinear form. If there is a totally null distribution $\cal N$ such that $\Im(\h)\subset \cal N$, $\cal K=\cal N^\perp$ is involutive and 
conditions \eqref{nabab} and \eqref{nabxb} of Theorem \ref{linprop} are satisfied,
then the metric $\widetilde{\gg}=2\der(\rho t)\der t+t^2(\ggo+\h)$ is Ricci-flat if the following system of linear PDEs on $\h=(h_{ij})$ is satisified:
\begin{eqnarray}
\label{divh}
\mathrm{div}(\h)&=&0,
\\
\label{lieha}
\cal L_Y\h&=&0,\ \forall\ Y\in \cal K^\perp,
\\
\label{fg1nilp3a}
\rho \ddot h_{ij}-(\frac{n}{2}-1)\dot h_{ij}
- \frac{1}{2} \Bo h_{ij}
+  
 \Ro^{k\ \ l}_{~ij~}h_{kl}+ \Ro^{k}_{~(i}h_{j)k}
 +
\Ro_{ij}
&=&0.
\end{eqnarray}
\end{corollary}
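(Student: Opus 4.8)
\medskip

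\noindent\emph{Proof plan.} This corollary is, in effect, the combination of Corollary~\ref{lincor} and Theorem~\ref{linprop}, so the plan is mainly to check that all hypotheses line up and then to chain the cited statements. First I would recall, via Proposition~\ref{fglem}, that for a $2$-step nilpotent $\h$ with $\Im(\h)\subset\cal N$ the possibly non-vanishing components of $Ric(\widetilde{\gg})$ are $\go^{kl}\no_k\dot h_{il}$ and $\rho\ddot h_{ij}-(\tfrac{n}{2}-1)\dot h_{ij}+R_{ij}$, where $R_{ij}$ is the Ricci tensor of $\gg=\ggo+\h$. Since $\no$ and the raising of indices by $\ggo$ do not depend on $\rho$, equation~(\ref{divh}) gives $\go^{kl}\no_k\dot h_{il}=\del_\rho\big(\go^{kl}\no_k h_{il}\big)=0$ (for such $\h$ the $\ggo$- and $\gg$-divergences agree, as they differ by $h^{kl}\no_k h_{il}$, which vanishes by the index structure in Lemma~\ref{lem1}); hence $\widetilde{\gg}$ is Ricci-flat if and only if $\rho\ddot h_{ij}-(\tfrac{n}{2}-1)\dot h_{ij}+R_{ij}=0$.

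It then remains to identify $R_{ij}$. Here I would verify that the hypotheses of Theorem~\ref{linprop} hold under the assumptions of the corollary: $\h$ is divergence free by~(\ref{divh}) and $2$-step nilpotent by assumption; since $\cal N$ is totally null we have $\cal N\subset\cal N^\perp=\cal K$, and since the metric is nondegenerate $\cal K^\perp=(\cal N^\perp)^\perp=\cal N$, so indeed $\Im(\h)\subset\cal N=\cal K^\perp\subset\cal K$ with $\cal K$ involutive; conditions~(\ref{nabab}) and~(\ref{nabxb}) are assumed; and~(\ref{lieha}) is precisely the extra hypothesis $\cal L_Y\h=0$ for $Y\in\cal K^\perp=\cal N$ needed in the last part of Theorem~\ref{linprop}. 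Therefore $Q^{(2)}_{ij}(\h)=0$ and, by~(\ref{fullric1}), $R_{ij}=\Ro_{ij}+\no^k\no_{(i}h_{j)k}-\tfrac12\Bo h_{ij}$. Using the divergence-freeness of $\h$ once more, I would rewrite the term $\no^k\no_{(i}h_{j)k}$ by means of~(\ref{nknihkj}) as $\Ro^{k\ \ l}_{~ij~}h_{kl}+\Ro^{k}_{~(i}h_{j)k}$ (the curvature term $\Ro^{k\ \ l}_{~ij~}h_{kl}$ being symmetric in $i,j$ by the pair symmetry of the curvature tensor, so that it carries no symmetrisation bracket). Substituting this expression for $R_{ij}$ into the reduced equation $\rho\ddot h_{ij}-(\tfrac{n}{2}-1)\dot h_{ij}+R_{ij}=0$ produces exactly~(\ref{fg1nilp3a}); equivalently, one feeds $Q^{(2)}_{ij}(\h)=0$ into equation~(\ref{fg1nilp3}) of Corollary~\ref{lincor}.

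Since every step is a direct application of a result proved above, there is no genuine analytic obstacle; the only points requiring care are bookkeeping ones: that $\del_\rho$ commutes with $\no$ and with raising indices by $\ggo$ (so divergence-freeness of $\h$ passes to $\dot\h$), that $\cal N=\cal K^\perp$ is forced by $\cal N$ being totally null together with nondegeneracy of the metric, and that the index symmetrisations appearing in~(\ref{nknihkj}) match those in~(\ref{fg1nilp3a}) when $R_{ij}$ is substituted.
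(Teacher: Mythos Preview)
Your proposal is correct and follows exactly the route the paper intends: the corollary is stated immediately after Theorem~\ref{linprop} with the phrase ``Theorem~\ref{linprop} gives another'', so its proof is simply to feed $Q^{(2)}_{ij}(\h)=0$ from Theorem~\ref{linprop} into equation~(\ref{fg1nilp3}) of Corollary~\ref{lincor}, which is precisely what you do. Your additional bookkeeping (that $\cal K^\perp=\cal N$, that $\del_\rho$ commutes with $\no$ so divergence-freeness passes to $\dot\h$, and the symmetry check on the curvature term) is accurate and makes explicit what the paper leaves implicit.
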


The examples of conformal structures in \cite{leistner-nurowski09,AndersonLeistnerNurowski15} satisfy the assumptions of Theorem \ref{linprop} and the corollary, which enabled us to use the ansatz to find Ricci-flat ambient metrics.

Note that the assumptions of Theorem \ref{linprop} imply that $\no_X\e_a\in \mathcal K$ but not that $\mathcal K$ or $\mathcal K^\perp=\mathrm{span}(\e_1, \ldots , \e_p)$ are parallel  distributions. Indeed, the terms
\[
2\ggo(\no_i\e_a,\e_A)\ =\ \go( [\e_i,\e_a ], \e_A) +\go( [\e_A,\e_a],\e_i) +\go ( [\e_A, \e_i], \e_a)
\]
might be non-zero for $i=B$ or $i=\cc$.

\section{Ambient metrics for null Ricci Walker metrics}

\label{walkersec}

In this section we apply the results of the previous section to conformal classes given by a null Ricci Walker metric $\gg$ as defined in the introduction. First we review some results about Walker metrics, then focus on the Ricci tensor, and finally draw the conclusions from the previous sections about the ambient metric of null Ricci Walker-manifolds. Note that in Section \ref{walker}
we drop the suffix $0$ on $\ggo$ for brevity, and use it again in Section~\ref{pwalkeramb} when we need to distinguish between $\ggo$ and the $\rho$-dependent family $\gg$.

\subsection{Walker manifolds} 
\label{walker}
A pseudo-Riemannian manifold $(M,\gg)$ is  
a {\em Walker manifold} if   there is a vector distribution $\cal N\subset TM$ of rank $p>0$ that 
is a totally null with respect to $g$ and 
 invariant under parallel transport with respect to the Levi-Civita connection of $\gg$. The most comprehensive study of Walker manifolds can be found in \cite{walkerbook}. In the following we will derive  a description that is useful for our purpose and allows us to construct examples.
 \begin{proposition}\label{walkerprop}
 Let $(M,\gg)$ be a pseudo-Riemannian manifold of dimension $n$. Then the following conditions are equivalent
\begin{enumerate}
\item\label{walkerprop1}
$(M,\gg)$ is a {\em Walker manifold} with parallel null distribution $\cal N$.
\item \label{walkerprop2}
There exists local coordinates $(x^1, \ldots , x^n)$,  so-called {\em Walker coordinates},
such that
\[
\gg
\
=
\
2 \der x^\aa( \delta_{\aa b} \der x^b + F_{\aa B}\der x^B+ H_{\aa \bb}\der x^\bb)+ G_{AB}\der x^A\der x^B,
\]
where the $F_{\aa B}$ and $G_{AB}$ independent of the $x^a$'s.
Here we use the same index conventions as in \eqref{indices} as well as $\delta_{\aa b}=1$ if $\aa=n-p+b$ and zero otherwise. In these coordinates, the parallel null distribution $\cal N$  is given by the span of the $\partial_a=\frac{\partial}{\partial x^a}$'s.
\item\label{walkerprop3}
There is a 
 frame 
$(\e_1, \ldots,  \e_n)$ with dual frame $(\Theta^1,\ldots , \Theta^n)$  such that \[\gg\ =\ 
2g_{a \cc} \Theta^a\circ \Theta^\cc+g_{AB}\Theta^A\circ \Theta^B,\]
 with constants $g_{a \cc}$ and $g_{AB}$ 
and such that 
\begin{equation}\begin{array}{rcl}
\label{kint}
\mathcal K&=&\mathrm{span} (\e_1, \ldots \e_{n-p})\text{ is involutive,}
\\
\left[ \e_a,\e_b\right]& = &
\left[\e_a,\e_B\right]\ =\ 0
\\
\left[\e_a,\e_\cc\right]&\in & \mathcal K^\perp,\quad
\left[\e_B,\e_\cc\right]\ \in \  \mathcal K,
\ \text{ and }\ 
\left[\e_\aa,\e_\cc\right]\ \in \  \mathcal K^\perp
\end{array}
\end{equation}
In this frame $\cal N=\cal K^\perp=\mathrm{span}(\e_1, \ldots ,\e_p)$.
 \end{enumerate}
 
 \end{proposition}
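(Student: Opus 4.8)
The plan is to prove the three equivalences in a cycle, say \eqref{walkerprop1}$\Rightarrow$\eqref{walkerprop2}$\Rightarrow$\eqref{walkerprop3}$\Rightarrow$\eqref{walkerprop1}, since each implication is the natural direction. The implication \eqref{walkerprop1}$\Rightarrow$\eqref{walkerprop2} is the classical theorem of Walker \cite{walker50I}: given a parallel totally null distribution $\cal N$ of rank $p$, one chooses coordinates adapted to the foliation integrating $\cal N$ (which is integrable, being parallel) and its orthogonal complement $\cal K=\cal N^\perp$ (also integrable, containing $\cal N$), and the parallelity forces the metric coefficients into the stated normal form, with the $x^a$-independence of $F_{\aa B}$ and $G_{AB}$ being exactly the content of $\no\cal N=0$. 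I would either cite \cite{walkerbook,walker50I} for this step or, if a self-contained argument is wanted, sketch it: pick the leaves of $\cal N$ as $\{x^A=x^\aa=\text{const}\}$, extend to a full coordinate system, compute the Christoffel symbols, and read off that $\Gamma$ preserving $\Span(\partial_a)$ kills the relevant derivatives.

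For \eqref{walkerprop2}$\Rightarrow$\eqref{walkerprop3}, the idea is to manufacture the frame directly from the Walker coordinates. One sets $\e_a=\partial_a$ (so $\cal N=\Span(\partial_a)$), then defines $\e_B$ and $\e_\aa$ as suitable coordinate vector fields corrected by multiples of the $\partial_a$ so that $\gg(\e_i,\e_j)$ becomes constant: concretely $\e_B=\partial_B-F_{\aa B}g^{\aa c}\partial_c+\ldots$ and $\e_\aa=g^{\aa b}\partial_b'$-type combinations, chosen to diagonalise the "middle" block and to make $\gg$ take the stated constant-coefficient shape $2g_{a\cc}\Theta^a\Theta^\cc+g_{AB}\Theta^A\Theta^B$. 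The bracket relations in \eqref{kint} are then a direct computation: $[\e_a,\e_b]=[\partial_a,\partial_b]=0$, $[\e_a,\e_B]=0$ because $F,G$ are $x^a$-independent, and the memberships $[\e_a,\e_\cc]\in\cal K^\perp$, $[\e_B,\e_\cc]\in\cal K$, $[\e_\aa,\e_\cc]\in\cal K^\perp$ follow by tracking which coordinate directions can appear once the correction terms are in place; involutivity of $\cal K=\Span(\e_1,\dots,\e_{n-p})$ likewise drops out. This is the step with the most bookkeeping, but it is routine once the correction terms are written down carefully.

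For \eqref{walkerprop3}$\Rightarrow$\eqref{walkerprop1}, I would show $\cal N=\cal K^\perp=\Span(\e_1,\dots,\e_p)$ is parallel using only the bracket relations and the constancy of the metric coefficients. Since $\gg(\e_i,\e_j)$ is constant, the Koszul formula reduces to
\[
2\gg(\no_X Y,Z)=\gg([X,Y],Z)-\gg([Y,Z],X)+\gg([Z,X],Y).
\]
To prove $\no_X\e_a\in\cal N$ for all $X$ and all $a\le p$, it suffices to check $\gg(\no_{\e_i}\e_a,\e_j)=0$ whenever $\e_j\notin\cal N$ — equivalently, using that $\cal N$ is totally null and $\cal N^\perp=\cal K$, it suffices to show $\no_{\e_i}\e_a\perp\cal K$, i.e. $\gg(\no_{\e_i}\e_a,\e_b)=\gg(\no_{\e_i}\e_a,\e_B)=0$. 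Each of these expands via Koszul into three bracket pairings, and the relations \eqref{kint} — especially $[\e_a,\e_b]=[\e_a,\e_B]=0$, $[\e_a,\e_\cc]\in\cal K^\perp$, and involutivity of $\cal K$ — force every surviving term to vanish because $\cal N=\cal K^\perp$ is totally null and pairs trivially with $\cal K^\perp$, while the remaining terms land in $\cal K$ paired against $\cal K$. Thus $\no\cal N\subset\cal N$, i.e. $\cal N$ is parallel and totally null, which is \eqref{walkerprop1}.

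The main obstacle is the middle implication \eqref{walkerprop2}$\Rightarrow$\eqref{walkerprop3}: writing down explicitly the frame corrections that simultaneously (i) make all metric components constant and (ii) produce exactly the bracket memberships in \eqref{kint} requires care, since a naive choice will leave extra $x^\aa$-dependent terms in some brackets. I expect one needs to use the $x^a$-independence of $F$ and $G$ crucially to absorb those, and possibly to further adjust the $\e_\aa$ by $x^\aa$-dependent multiples of $\e_B$. Everything else is either a citation (Walker's theorem) or a short Koszul-formula computation.
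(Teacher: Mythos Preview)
Your proposal is correct and follows essentially the same route as the paper's proof: the paper cites Walker for \eqref{walkerprop1}$\Leftrightarrow$\eqref{walkerprop2}, constructs the frame for \eqref{walkerprop2}$\Rightarrow$\eqref{walkerprop3} by setting $\e_a=\partial_a$, $\e_A=C_A^{~B}(\partial_B-F_{\aa B}\delta^{\aa b}\partial_b)$ with $C$ diagonalising $G$, and $\e_\cc=\partial_\cc-H_{\aa\cc}\delta^{\aa b}\partial_b$, and then verifies the brackets directly; for \eqref{walkerprop3}$\Rightarrow$\eqref{walkerprop1} it uses exactly the Koszul-formula argument you outline. Your anticipated ``obstacle'' of needing further $\e_B$-corrections in $\e_\aa$ does not in fact arise---the simple $\partial_a$-correction already suffices, since the resulting brackets $[\e_\aa,\e_\cc]$ land in $\Span(\partial_a)=\cal K^\perp$ automatically.
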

 
 \bprf 
 The equivalence of items \eqref{walkerprop1} and  \eqref{walkerprop2} is due to Walker \cite{walker50I}. In order to show that \eqref{walkerprop2} implies \eqref{walkerprop3}, we fix some 
 Walker coordinates
$(x^1, \ldots , x^n)$ such that
\[
\gg
\
=
\
2\der x^\aa( \delta_{\aa \bb} \der x^b + F_{\aa B}\der x^B+ H_{\aa \bb}\der x^\bb)+ G_{AB}\der x^A\der x^B 
\]
with $F_{\aa B}$ and $G_{AB}$ independent of the $x^a$'s. Then we set
\[
\e_a:=\partial_a, \qquad
\e_A:=
C_{A}^{~B}\left( \del_B- F_{\aa B} \delta^{\aa\bb }\del_b\right),
\qquad
\e_\cc:=
\partial_\cc- H_{\aa\cc}\delta^{\aa\bb}\del_b,
\]
where $C_A^{~B}$ is a matrix such that $C_A^{~B}G_{BE}C_D^{~E}=\delta_{AD}$. Note that, since $G_{AB}$ does not depend on the $x^a$'s, also $C_A^{~B}$ does not depend on the $x^a$'s. We claim that this frame satisfies all the conditions \eqref{kint}. Clearly, the metric in this frame has the right form and $[\e_a,\e_b]=0$. But also the other commutator relations are satisfied:
\begin{eqnarray*}
\left[\e_a,\e_\cc\right]&=&\left[  \del_a, \del_\cc- H_{\bb\cc}\delta^{\bb\bar{e}}\del_e
\right]
\ =\ 
-
\der H_{\bb\cc }( \del_a) \delta^{\bb\bar{e}}\del_e \ \in \  {\mathcal K}^\perp
\\
\left[\e_a,\e_A\right]&=&\left[  \del_a , C_{A}^{~B}\left( \del_B- F_{\cc B} \delta^{\cc\dd }\del_d\right)
\right]
\ =\ 
0
\\
\left[\e_\cc,\e_A\right]&=&\left[  \partial_\cc- H_{\aa\cc}\delta^{\aa\bb}\del_b
,
  C_{A}^{~B}\left( \del_B- F_{\bar{e} B} \delta^{\bar{e}\dd }\del_d\right)
\right]
\
\in \
\mathcal K
\end{eqnarray*}
This shows that all the conditions \eqref{kint}  are satisfied. 

Conversely, we have to show that the bracket relations \eqref{kint} imply that $\nabla_X\e_a\in \cal N=\cal K^\perp$. For this we use the Koszul formula
\[2\gg(\nabla_{\e_i}\e_a,\e_j)= \gg([\e_i,\e_a],\e_j)+\gg([\e_j,\e_a],\e_i)+\gg([\e_j,\e_i],\e_a).\]
From \eqref{kint} is follows that this is zero for all $j=a$ and $j=B$. Hence, $\nabla_X\e_a\in \cal N=\cal K^\perp=\mathrm{span}(\e_1, \ldots, \e_p)$.
 \eprf

Next we record formulas for the curvature of a Walker metric.

\blem\label{walkerlemma2}
Let $(M,\gg)$ be a Walker manifold and let $(\e_1, \ldots ,\e_n)$ be a frame as in \eqref{walkerprop3} of Proposition \ref{walkerprop} such that $\gg$ is given as in \eqref{gnilp}.
\begin{enumerate}
\item
Let $\Gamma_{~~ij}^k$ the connection components with respect to the frame $(\e_1, \ldots , \e_n)$, i.e., defined by
$\nabla_i\e_j=\Gamma_{~~ij}^k\e_k$.
Then 
\begin{equation}\label{christoffelwalker}
\begin{array}{rcl}
\Gamma_{~ab}^k\ =\ \Gamma_{~ba}^k\ = \
\Gamma_{~Ab}^k\ =\ \Gamma_{~bA}^k&=&0,
\\[1mm]
\Gamma_{~ai}^B\ =\ \Gamma_{~ia}^B
&=&0,
\\[1mm]
\Gamma_{~ai}^\cc\ =\ \Gamma_{~ia}^\cc
\ =\ 
\Gamma_{~Ai}^\cc\ =\ \Gamma_{~iA}^\cc
&=&0.
\end{array}
\end{equation}
\item
The curvature tensor and and the Ricci tensor of $\gg$ satisfy
\begin{equation}
\label{curvwalker}
 R_{ijab}\ =\ R_{ijaB}\ =\ 0,
 \end{equation}
 and 
 \begin{equation}
\label{ricwalker0}
R_{a b} = R_{a B} =0,
\end{equation}
for all $a,b=1, \ldots , p$, $B=p+1, \ldots n-p$ and $i,j=1, \ldots n$.
\end{enumerate}
\elem

\bprf
The properties of the connection components are a direct consequence of $\mathcal K$ and $\mathcal K^\perp $ being  parallel distributions and of the Koszul formula
\[
\Gamma_{~~ij}^k=\tfrac{1}{2}g^{kl}\left( \gg( [ \e_i,\e_j],\e_l) + \gg( [ \e_l,\e_j],\e_i) + \gg( [ \e_l,\e_i],\e_j)\right)
.\]
As $\mathcal K$ and $\mathcal K^\perp $ are parallel distributions, in the given frame,  the curvature tensor of  a Walker manifold satisfies equations (\ref{curvwalker}).
Indeed, we have for example
\[
R_{b i A d}=\gg(\Ro(\e_b,\e_i)\e_A,\e_d)=0\]
since $\mathcal K$ is parallel and thus $R(\e_b,\e_i)\e_A\in \mathcal K$.
 This implies that the components of the Ricci tensor
\[
R_{ai}
=
g^{b\cc} (R_{bai\cc}+R_{\cc aib})+g^{AB}R_{AaiB}
=
g^{b\cc}R_{\cc aib}
\]
are zero unless $i=\dd$.
\eprf

This shows that the terms of the Ricci tensor that could prevent a Walker metric from being null Ricci Walker are the following
\begin{equation}
\label{ricwalker}
\begin{array}{rcl}
R_{\aa b}&=&g^{\cc d}R_{d\aa b\cc}
\\[1mm]
R_{AB}&=&
g^{CD}R_{CABD}
\\[1mm]
R_{\aa B}&=&g^{\cc d}R_{\cc B\aa d}+g^{AC}R_{AB\aa C}.
\end{array}
\end{equation}
We will now give conditions for these terms to vanish.
The following results will also
provide a method of constructing examples of null Ricci Walker metrics in Section \ref{examples}, in particular for 
the examples  of Lie groups with left-invariant metric.

\begin{proposition}\label{ricpropwalker}
Let $\gg$ be a metric as in \eqref{gnilp} and assume that the frame $(\e_1, \ldots , \e_n)$ satisfies the following bracket relations
\[
\left[\e_i,\e_j\right]=  r_{ij}^k \e_k
\]
with smooth functions $r_{ij}^k$ satisfying the relations
\begin{equation}
\label{w1} r_{ab}^k \ =\
r_{aB}^k
\ =\ r_{AB}^\cc
\ =\ 
r_{a\cc}^\bb\ =\ r_{a\cc}^B\ =
\
r_{B\cc}^\aa\ =\ 
r_{\aa\cc}^{\bar{b}}
\ =\ 
r_{\aa\cc}^B
\ =\ 
0,
\end{equation}
(these are just the conditions in Proposition \ref{walkerprop}). If we assume in addition that
\beq
\label{w5}
r_{AB}^C&=&0,
\eeq 
and 
\beq
 \label{dr1}
\der r_{b\cc}^d(\e_A)&=&0,\\
\label{dr2}
\der r_{BC}^d(\e_A)\  =\ \der r_{ B \cc}^D(\e_A)&=&0
,\eeq
then $\gg$ is a Walker metric whose curvature satisfies in addition 
\[
R_{ABCi}=R_{\aa b D \cc}=0,\qquad R_{Ai}=0,\]
and
\[
R_{ \aa b \cc d}=g_{f(\aa } dr^{f}_{\cc)d}(\e_b)
.\]
Moreover, $\gg$ is null Ricci Walker, if and only if
\begin{equation}
\label{ricinr}
R_{b\cc}=
\tfrac{1}{2} \left( g_{f\cc}g^{\aa d}\der r^f_{\aa d}(\e_b)+\der r^d_{\cc d}(\e_b)\right)=0
.
\end{equation}
\end{proposition}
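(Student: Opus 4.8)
The plan is to prove Proposition~\ref{ricpropwalker} by a direct computation of the curvature and Ricci tensor in the frame $(\e_1,\ldots,\e_n)$, systematically exploiting the structure equations $[\e_i,\e_j]=r_{ij}^k\e_k$ together with the algebraic constraints \eqref{w1}, \eqref{w5}, \eqref{dr1} and \eqref{dr2}. First I would observe that conditions \eqref{w1} are exactly the bracket relations in Proposition~\ref{walkerprop}\eqref{walkerprop3}, so $\gg$ is already a Walker metric and Lemma~\ref{walkerlemma2} applies; in particular the Christoffel symbols satisfy \eqref{christoffelwalker}, the curvature satisfies \eqref{curvwalker} and \eqref{ricwalker0}, and the only potentially nonzero Ricci components are those listed in \eqref{ricwalker}. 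The task then reduces to (i) computing the remaining Christoffel symbols $\Gamma^k_{\ ij}$ from the Koszul formula $2\Gamma^k_{\ ij}=g^{kl}(\gg([\e_i,\e_j],\e_l)+\gg([\e_l,\e_j],\e_i)+\gg([\e_l,\e_i],\e_j))$, using the constancy of the metric coefficients so that $\e_l(g_{\cdot\cdot})=0$; (ii) feeding these into the frame curvature formula $R^l_{\ kij}=\e_i(\Gamma^l_{\ jk})-\e_j(\Gamma^l_{\ ik})+\Gamma^l_{\ im}\Gamma^m_{\ jk}-\Gamma^l_{\ jm}\Gamma^m_{\ ik}-r_{ij}^m\Gamma^l_{\ mk}$; and (iii) contracting to get the Ricci components in \eqref{ricwalker}.

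The key steps in order: \textbf{Step 1.} Compute all Christoffel symbols. The nonzero ones will be built from $r^\aa_{b\cc}$, $r^C_{B\cc}$, $r^\cc_{B\dd}$ (and their metric-index-raised relatives); the assumptions \eqref{w5}, \eqref{dr1}, \eqref{dr2} are precisely what kills the $\e_A$-derivatives and the $r^C_{AB}$ terms. \textbf{Step 2.} Using \eqref{dr1}, \eqref{dr2} and \eqref{w5}, show that $\Gamma^k_{\ Ai}$ involving derivatives along $\e_A$ vanish, which forces $R_{ABCi}=0$ and $R_{\aa b D\cc}=0$ by the curvature formula --- here the point is that the only surviving terms in these curvature components are $\e_A$-derivatives of Christoffel symbols (which vanish by \eqref{dr1}--\eqref{dr2}) or quadratic Christoffel terms that vanish by the Walker bracket relations. \textbf{Step 3.} Deduce $R_{Ai}=0$: from $R_{ABCi}=0$ and the already-known vanishing results, the contraction $R_{Ai}=g^{b\cc}(R_{bAi\cc}+R_{\cc Aib})+g^{AB}R_{ABiB}$ collapses; the terms $R_{\aa b D\cc}=0$ handle the mixed piece and \eqref{w5} handles $g^{AB}R_{\cdot}$. \textbf{Step 4.} Compute $R_{\aa b\cc d}$ explicitly: the surviving contribution is $\e_b(\Gamma^f_{\ \cc d})$-type terms, and with $\Gamma^f_{\ \cc d}\sim g_{f(\cdot}r^{\cdot}_{\cc)d}$ one obtains $R_{\aa b\cc d}=g_{f(\aa}\,\dr r^f_{\cc)d}(\e_b)$ after symmetrisation (using the pair symmetry $R_{\aa b\cc d}=R_{\cc d\aa b}$). \textbf{Step 5.} Contract to get $R_{b\cc}=g^{\aa d}R_{\aa b\cc d}$ (the other pieces of \eqref{ricwalker} for $R_{\aa b}$ vanish by Steps 2--3), plug in the Step 4 formula, and relate the term $\der r^d_{\cc d}(\e_b)$ to a trace of $dr^f_{\aa d}$; symmetrising carefully over $(\aa,d)$ against $g^{\aa d}$ gives $R_{b\cc}=\tfrac12(g_{f\cc}g^{\aa d}\dr r^f_{\aa d}(\e_b)+\dr r^d_{\cc d}(\e_b))$. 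The null Ricci Walker condition is then exactly the vanishing of this expression together with the already-established $R_{AB}=R_{\aa B}=0$ (which follow from Steps 2--3).

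I expect the main obstacle to be \textbf{Step 4 and Step 5}: bookkeeping the index symmetrisations correctly. The curvature formula in a non-coordinate frame has the extra $-r_{ij}^m\Gamma^l_{\ mk}$ term, and there are several Christoffel symbols of comparable complexity; distinguishing which $\e_A$-derivatives are killed by \eqref{dr1} versus \eqref{dr2}, and tracking the raised-index conventions $r^d_{\cc d}$ versus $g^{\aa d}r^f_{\aa d}$, requires care. The symmetrisation that produces the factor $\tfrac12$ and the two-term structure in \eqref{ricinr} is the delicate endpoint: one must use $R_{\aa b\cc d}=R_{\cc d\aa b}$ to rewrite $g^{\aa d}R_{\aa b\cc d}$ symmetrically and then split the derivative of $r^f$ into its ``trace'' part and ``traceless'' part relative to the null block. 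Everything else is a mechanical but lengthy application of the Koszul formula and the frame curvature identity, streamlined considerably by first invoking Lemma~\ref{walkerlemma2} to discard the vanishing components wholesale.
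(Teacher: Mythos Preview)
Your proposal is correct and follows essentially the same route as the paper's proof: both invoke Lemma~\ref{walkerlemma2} to reduce to the nontrivial curvature components, compute Christoffel symbols via the Koszul formula \eqref{christoffel}, use \eqref{w5} to obtain $\Gamma^C_{~AB}=0$ and hence $R_{ABCD}=0$, use \eqref{dr1}--\eqref{dr2} to kill $R_{\aa B\cc d}$ and $R_{ABD\cc}$, and then compute $R_{b\aa d\cc}=g_{f(\aa}\,\dr r^f_{\cc)d}(\e_b)$ directly. One small remark: in Step~5 you anticipate needing to ``split the derivative of $r^f$ into its trace and traceless parts'', but in fact the contraction $g^{\aa d}R_{\aa b\cc d}=g^{\aa d}g_{f(\aa}\,\dr r^f_{\cc)d}(\e_b)$ gives \eqref{ricinr} immediately once you expand the symmetrisation and use $g^{\aa d}g_{f\aa}=\delta^d_f$; no further manipulation is required.
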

\bprf

First we compute  the curvature components $R_{bijd}$. Because of the previous lemma  we only have to compute $R_{b\aa\cc d}$ as all other are zero. 
In terms of the $r_{ij}^k$'s the connection coefficients  $\Gamma_{~~ij}^k$ write as
\be\label{christoffel}
\Gamma^k_{ij}=\frac{1}{2}r^k_{ij}+g^{kl}r_{l(i}^mg_{j)m}
=
\frac{1}{2}r^k_{ij}-g^{kl}g_{m(i}r_{j)l}^m
.
\ee
After imposing the condition on the frame to define a Walker metric, i.e., after imposing equations \eqref{w1}, Lemma \ref{walkerlemma2} 
leaves us  with the only possibly non-vanishing  connection coefficients $\Gamma_{~a\cc}^b$, $\Gamma_{~AB}^b$, $\Gamma_{~AB}^C$, $\Gamma_{~A\cc}^b$, $\Gamma_{~A\cc}^B$ and $\Gamma_{~\aa\cc}^k$. 
Imposing the additional condition \eqref{w5}, $r_{AB}^C=0$, implies 
\[
\Gamma_{~AB}^C=-g^{CD}g_{E(A}r_{B)D}^E=0,\]
This together with $\Gamma_{AB}^\cc=0$, implies that $\nabla_{\e_A}\e_B\in \cal K^\perp$ and hence, with $\cal K^\perp$ being parallel,  that 
\[R_{ABCD}=0,\]
and therefore by (\ref{ricwalker}) that
\[R_{AB}=0.\]

Next, we look at the curvature terms in 
$R_{\cc B}=g^{\aa d}R_{\aa B\cc d}+g^{AC}R_{AB\cc C}$ and compute
\[
R_{\aa B\cc d}
=
-g_{\bb d} \der\Gamma^{\bb}_{~\aa\cc}(\e_B)
=
\tfrac{1}{2}g_{b(\aa}dr_{\cc)d}^b(\e_B).
\]
This vanishes because of condition (\ref{dr1}). Moreover, 
\[
R_{ABD\cc}
=
g_{b\cc}\left(\der\Gamma^b_{~BD}(\e_A)-\der\Gamma^b_{~AD}(\e_B)\right)
=
-
g_{b\cc} \der r^b_{D(A}(\e_{B)}) -\der r^E_{D\cc}(\e_{[A})g_{B]E}
+
g_{ED}\der r^E_{\cc[A}(\e_{B]}),
\]
vanishes because of  condition (\ref{dr2}). Hence we have 
\[ R_{ABD\cc}\ =\ R_{\aa B\cc d}\ =\ 0\] and therefore
$R_{Ai}=0$.
Furthermore, because of $\Gamma_{~ab}^k=0$ and $\left[\e_a,\e_\cc\right]=  r_{a\cc}^b\e_b$ we obtain
\[
R_{b\aa d \cc }
=
\gg(\nabla_b\nabla_\aa\e_d,\e_\cc)
=
\left(
\der\Gamma^f_{~\aa d}(\e_b) + \Gamma^k_{~\aa d}\Gamma_{~bk}^f \right) g_{f\cc}
=
\der\Gamma^f_{~\aa d}(\e_b)g_{f\cc}
=
g_{f(\aa} \der r^f_{\cc)d}(\e_b).
\]
which implies the formula (\ref{ricinr}) for the Ricci components $R_{b\cc}$. The metric is null Ricci Walker if and only if these components vanish. This proves the statement.
\eprf

\begin{remark}
Of course, when constructing examples, the $r_{ij}^k$'s in this proposition cannot be chosen freely as they have to obey Jacobi's identity. However  in some situations, such as $\cal N=\cal N^\perp$, i.e., $n=2p$, or when constructing examples of left-invariant metrics, i.e., when the $r_{ij}^k$'s are constant,  the conditions (\ref{w5}), (\ref{dr1}) and (\ref{dr2}) can be imposed without yielding a contradiction.
\end{remark}
\begin{remark}\label{ricrem}
In view of the examples we will construct in Section \ref{examples}, note that in general the remaining Ricci components do not vanish, even if all the $r_{ij}^k$'s are constant:
\beqn
R _{\aa\cc}
&=&
2g ^{b\dd}R _{b(\aa\cc)\dd}+g ^{BD}R _{B(\aa\cc)D}
\\
&=&
2\left(
\der\Gamma_{~(\aa\cc)}^b(\e_b) - \der\Gamma_{~b(\aa}^b(\e_{\cc)})
+ \Gamma_{~b(\aa}^\der r_{\cc)d}^b
- \Gamma_{~b(\aa}^\der\Gamma_{~\cc)d}^b 
+ \Gamma_{~b\dd}^b \Gamma_{~(\aa\cc)}^\dd
\right.
\\
&& +
\left. \der\Gamma^A_{~(\aa\cc)}(\e_A) -\der\Gamma_{~A(\aa}^A(\e_{\cc)}) +\Gamma_{~B(\aa}^Ar^B_{\cc)A}
 +\Gamma_{~B(\aa}^A\Gamma^B_{\cc)A}
+\Gamma^\dd_{~(\aa\cc)}\Gamma_{~A\dd}^A\right).
\eeqn\end{remark}


\subsection{The Fefferman-Graham equations for null Ricci Walker metrics}
\label{pwalkeramb}
Here we apply our results of Theorems \ref{linprop} and \ref{theo2} to null Ricci Walker metrics.  The following theorem will imply Theorem \ref{theo3intro} and consequently Theorem~\ref{theo1} from the introduction.

\begin{theorem}\label{fgwalkerthm}
Let $(M,\ggo)$ be a null Ricci Walker-manifold with parallel totally null distribution $\cal N$ such that $\Im(\P)\subset \cal N$.
Then an ambient  metric $\widetilde{\gg}=2\der t\der(\rho t)+t^2\gg(\rho)$ for $[\ggo]$ in the sense of Definition \ref{ambientdef}  is given by $\gg=\ggo+\h$, where 
 $\h=\h(\rho)$ is divergence free bilinear form  with $\Im(\h)\subset \cal N$ that satisfies  the 
  the PDE
\begin{equation}
\label{fgwalker}
\rho \ddot h_{ij}-\tfrac{n-2}{2}\dot h_{ij} - \tfrac{1}{2}\Bo h_{ij}
+  
 \Ro_{kijl}h^{kl}
+
\Ro_{ij}+
\tfrac{1}{2} \left(h^{kl}  \no_k\no_lh_{ij}
+
\no_{k}h_{li}\no^lh_{~j}^{k}\right)
=O(\rho^m),
\end{equation}
for $m=\infty$ if $n$ is odd and $m=\frac{n-2}{2}$ when $n$ is even. Here 
 $\h=(h_{ij})$, $\Ro_{ijkl}$ denotes the curvature tensor, $\Ro_{ij}$ the Ricci tensor and $\Bo h_{ij}=\no^k\no_kh_{ij}$, all with respect to $\ggo$.
\end{theorem}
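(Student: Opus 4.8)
The plan is to simply combine the structural results already established. First I would record that a null Ricci Walker metric $\ggo$ with parallel null distribution $\cal N$ satisfies all the hypotheses of the results in Section~\ref{towards}: by definition $\Im(\P)\subset\cal N$, so (Lemma~\ref{alglem}) $\P$ is two-step nilpotent and $\ggo$ is scalar-flat; since $\cal N$ is parallel, $\cal K=\cal N^\perp$ is involutive, and conditions \eqref{nabab} and \eqref{nabxb} of Theorem~\ref{linprop} hold automatically (parallelity of $\cal N$ gives $\no_X\e_a\in\cal N\subset\cal K$ for all $X$, and $\no_Z\e_a\in\cal N$ for $Z\in\cal K$). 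Next I would make the ansatz $\gg=\ggo+\h$ with $\h=\h(\rho)$ a $\rho$-dependent, divergence-free symmetric bilinear form with $\Im(\h)\subset\cal N$, so that $\h$ is two-step nilpotent with respect to $\ggo$ in the sense of Section~\ref{conventions}; note $\h|_{\rho=0}=0$ is required for $\widetilde\gg$ to be in normal form.

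Then I would apply Proposition~\ref{fglem}: for $\widetilde\gg=2\der(\rho t)\der t+t^2(\ggo+\h)$ the only possibly non-vanishing components of $Ric(\widetilde\gg)$ are $\go^{kl}\no_k\dot h_{il}$ and $\rho\ddot h_{ij}-\tfrac{n-2}{2}\dot h_{ij}+R_{ij}$, where $R_{ij}$ is the Ricci tensor of $\gg=\ggo+\h$. Since $\dot\h$ is divergence-free whenever $\h$ is (differentiation in $\rho$ commutes with $\no$), the first family of components vanishes identically, so $\widetilde\gg$ satisfies the Fefferman--Graham equations \eqref{fg-eqs} if and only if $\rho\ddot h_{ij}-\tfrac{n-2}{2}\dot h_{ij}+R_{ij}=O(\rho^m)$. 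It remains to expand $R_{ij}$. By Proposition~\ref{ricprop} we have $R_{ij}=\Ro_{ij}+\no^k\no_{(i}h_{j)k}-\tfrac12\Bo h_{ij}+Q^{(2)}_{ij}(\h)+Q^{(3)}_{ij}(\h)+Q^{(4)}_{ij}(\h)$; Theorem~\ref{linprop} (more precisely its first two conclusions, for which the $\cal L_Y\h=0$ hypothesis is \emph{not} needed) tells us that under \eqref{nabab}, \eqref{nabxb} the cubic and quartic terms vanish and $Q^{(2)}_{ij}(\h)$ reduces to $\tfrac12 h^{kl}\no_k\no_lh_{ij}-\no_{[k}h_{l]i}\no^kh_j^{\ l}$, which after the sign rearrangement already noted in \eqref{q2a} equals $\tfrac12\bigl(h^{kl}\no_k\no_lh_{ij}+\no_kh_{li}\no^lh^{\ \,k}_j\bigr)$.

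Finally I would handle the term $\no^k\no_{(i}h_{j)k}$ using formula \eqref{nknihkj}, valid since $\h$ is divergence-free: $\no^k\no_ih_{jk}=\Ro^{k\ \ l}_{~ij~}h_{kl}+\Ro^{\ l}_ih_{jl}$. Here the second piece vanishes because $\Im(\Ro)\subset\cal N=\cal K^\perp\subset\Ker(\h^\sharp)$, i.e. $\Ro^{\ l}_ih_{jl}=0$ for a null Ricci Walker metric. Symmetrizing in $i,j$ then replaces $\no^k\no_{(i}h_{j)k}$ by $\Ro_{kijl}h^{kl}$ (using the curvature symmetries to write $\Ro^{k\ \ l}_{~ij~}h_{kl}=\Ro_{kijl}h^{kl}$). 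Substituting all of this into $\rho\ddot h_{ij}-\tfrac{n-2}{2}\dot h_{ij}+R_{ij}=O(\rho^m)$ yields exactly \eqref{fgwalker}, and the even/odd value of $m$ is inherited from Definition~\ref{ambientdef}. The only mildly delicate point is the bookkeeping in even dimensions: one must check that the vanishing of the first component family and the reductions of $R_{ij}$ are valid modulo $O(\rho^m)$ rather than exactly, but since each manipulation is algebraic or differential in $\h$ and its $\rho$-derivatives with smooth coefficients, the error orders are preserved; I expect this to be the main (though routine) obstacle, together with confirming that no hypothesis of Theorem~\ref{linprop} beyond \eqref{nabab}--\eqref{nabxb} is secretly used.
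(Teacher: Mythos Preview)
Your plan is correct and matches the paper's own proof almost step for step: the paper too reduces to Corollary~\ref{lincor} (which packages Propositions~\ref{fglem} and~\ref{ricprop}), invokes the first part of Theorem~\ref{linprop} to simplify $Q^{(2)}$ to \eqref{q2aa}, and then kills $\Ro^{k}_{~(i}h_{j)k}$ using $\Im(\P),\Im(\h)\subset\cal N$. One small correction: the identity $-\no_{[k}h_{l]i}\no^kh_j^{\ l}=\tfrac12\no_kh_{li}\no^lh_j^{\ k}$ is not a ``sign rearrangement already noted in \eqref{q2a}'' but genuinely uses that $\cal K$ is \emph{parallel} (so that $\no_kh_{li}\no^kh_j^{\ l}=0$ in the adapted frame, since the contraction forces $l$ into the $a$-range while $\no_kh_{ai}=0$); the paper makes this explicit, and you should too.
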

\bprf
Let $\widetilde{\gg}=2\der t\der(\rho t)+t^2\gg(\rho)$  be  an ambient metric 
 for the conformal class of $\ggo$
in the sense of Definition \ref{ambientdef}. Then, from Theorem \ref{theo2} we know that there is a $\h=\gg-\ggo$  that is  divergence free and its image is contained in $\cal N$. Then $\h$ and $\cal K=\cal N^\perp$ satisfy the assumptions of Corollary~\ref{lincor}. Hence, the  term quadratic in $\h$ in the Ricci tensor of $\gg=\ggo+\h$ is given by equation \eqref{q2aa}. Note that, since $\cal K$ is parallel, the second term in \eqref{q2aa} simplifies to 
\[\no_{[k}h_{l]i}\no^kh_j^{\ l}=-\tfrac{1}{2}\no_{k}h_{li}\no^lh_{~j}^{k}.\]
Moreover, since $\Im(\P)\subset \cal N$ and $\Im(\h)\subset \cal N$, in \eqref{fg1nilp3} the product of $\h$ with the Ricci tensor of $\go$ vanishes,
\[
\Ro^{k}_{~i}h_{jk}=\tfrac{1}{n-2}\P^{k}_{~i}h_{jk}=0.\]
This proves the statement.
\eprf

This theorem shows  for a null Ricci Walker metric, that the terms in the Fefferman-Graham equations that are non-linear in $\h$ vanish whenever the components $h_{\bb\dd}$ of $\h$ do not depend on the coordinates $x^a$ in Proposition \ref{walkerprop}  corresponding to the total null plane, i.e., if \[\cal L_{\del_a}\h_{\bb\dd}=\del_a(h_{\bb\dd})=0.\] 
In the following we will present two situations in which this assumption is satisfied.

\subsection{Null Ricci Walker metrics with linear Fefferman-Graham equations}
We have seen that the condition (\ref{lieh}), i.e, that 
\[\cal L_X\h=0\quad\text{ for all $X\in\cal \cal N$}\] is crucial for the Fefferman-Graham equations to linearise. We will now see  special classes of null Ricci Walker metrics for which this is the case. 
it turns out that the relation between property (\ref{lieh})  and the the curvature when applied to $\cal N$ is crucial. First we observe:
\begin{lemma}
Let $\ggo$ be a null Ricci Walker metric with parallel null distribution $\cal N$ and Schouten tensor $\P$. Assume furthermore that 
\begin{equation}
\label{Rcond}
X\hook\Ro=0,\qquad\text{ for all }X\in \cal N,
\end{equation}
where $\Ro$ is the curvature tensor of $\ggo$. Then $\cal L_X\P=0$  for all $X\in \cal N$. 
\end{lemma}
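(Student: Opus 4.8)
The plan is to show $\cal L_X\P = 0$ for $X \in \cal N$ by first upgrading the hypothesis $X \hook \Ro = 0$ to a statement about covariant derivatives of $\P$, and then relating $\nabla_X\P$ to $\cal L_X\P$ using that $\cal N$ is parallel. Since $\ggo$ is null Ricci Walker, $\P$ is a constant multiple of $\Ro_{ij}$ (scalar curvature vanishes), so it suffices to work with the Ricci tensor, or equivalently with $\P$ directly. The key classical input is the \emph{contracted second Bianchi identity}, $\no^k\Ro_{ki} = \tfrac12 \no_i \mathrm{Scal} = 0$ (again using scalar-flatness), together with the differential Bianchi identity relating $\no\P$ to the Cotton tensor $A_{ijk} = \no_j\P_{ki} - \no_k\P_{ji}$, which in turn satisfies $\no^k A_{ijk} = \P^{kl}W_{kijl}$ via the Bach tensor formula recalled in the excerpt.

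First I would show $\nabla_X \P = 0$ for $X \in \cal N$. Writing $X = X^a\e_a$ in a Walker frame as in Proposition \ref{walkerprop}, we need $X^a\no_a\P_{ij} = 0$, i.e. $\no_a\P_{ij} = 0$ for the frame directions $\e_a$ spanning $\cal N$. Because $\Im(\P) \subset \cal N$ and $\cal N$ is parallel, $\P_{ij} = 0$ unless $i,j \in \{\aa,\cc,\dots\}$, and $\no_k\P_{ij} = 0$ unless $i$ or $j$ is a null index (this is exactly the reasoning behind \eqref{nabh} in Lemma \ref{lem1}, applied to $\P$ in place of $\h$). So the only potentially nonzero components are $\no_a\P_{\bb\dd}$. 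To kill these I would use the contracted Bianchi identity $\no^k\Ro_{ki} = 0$ (valid since $\mathrm{Scal} = 0$), which for the relevant index gives a relation $\go^{k\ell}\no_k\P_{\ell \aa} = 0$; combined with the curvature hypothesis \eqref{Rcond}, which forces the curvature terms arising when commuting $\no_a$ with the other derivatives to vanish, one extracts $\no_a\P_{\bb\dd} = 0$. Concretely, differentiating $\P_{\bb\dd}$ and using that $[\e_a,\cdot]$ and the connection coefficients $\Gamma^k_{~ai}$ are severely restricted (Lemma \ref{walkerlemma2}, \eqref{christoffelwalker}), together with $X\hook\Ro = 0$, should collapse everything.

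Finally, with $\nabla_X\P = 0$ for $X \in \cal N$ in hand, I would pass to the Lie derivative. For a symmetric $(0,2)$-tensor, $(\cal L_X\P)(Y,Z) = (\nabla_X\P)(Y,Z) + \P(\nabla_Y X, Z) + \P(Y, \nabla_Z X)$. Since $\cal N$ is parallel, $\nabla_Y X \in \cal N$ and $\nabla_Z X \in \cal N$ for all $Y,Z$; and since $\Im(\P) \subset \cal N$ with $\cal N$ totally null, $\P(\cal N, \cdot) = 0$. Hence the last two terms vanish, and $\cal L_X\P = \nabla_X\P = 0$. The main obstacle is the middle step: carefully organizing the curvature and connection-coefficient bookkeeping in the Walker frame to show that the hypothesis $X\hook\Ro = 0$ together with the Bianchi identity really does force the remaining components $\no_a\P_{\bb\dd}$ to vanish — the index structure is delicate, but all the needed vanishing patterns are already available from Lemma \ref{walkerlemma2} and the proof of Theorem \ref{theo2} (cf. the commuting-derivatives argument there, which is of exactly the same flavor).
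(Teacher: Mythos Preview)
Your overall architecture is correct and matches the paper: first prove $\no_X\P = 0$ for $X \in \cal N$, then observe that for a null Ricci Walker metric $\cal L_X\P = \no_X\P$ (your final paragraph is exactly right and is what the paper uses). The gap is in the middle step.

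The tool you explicitly invoke, the \emph{twice}-contracted Bianchi identity $\no^k\Ro_{ki}=0$, is too weak. Unwinding it in the Walker frame gives only $\go^{a\bb}\no_a\P_{\bb\dd}=0$, a single trace for each $\dd$; when $\mathrm{rank}\,\cal N>1$ this does not force each $\no_a\P_{\bb\dd}$ to vanish. Your ``commute $\no_a$ through other derivatives'' idea does not close this gap: commuting produces curvature terms you can kill with~\eqref{Rcond}, but you are still left with a divergence-type statement, not the pointwise vanishing you need.

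The paper's argument uses the \emph{full} second Bianchi identity. From $\no_{[a}\Ro_{ij]kl}=0$ the two cyclic terms $\no_i\Ro_{jakl}$ and $\no_j\Ro_{aikl}$ involve curvature components with an $\cal N$-index; since~\eqref{Rcond} says these components vanish identically and $\cal N$ is parallel, their covariant derivatives vanish too. Hence $\no_a\Ro_{ijkl}=0$, and contracting gives $\no_a\Ro_{ij}=0$, so $\no_a\P_{ij}=0$. (Equivalently, the once-contracted Bianchi identity $\no_a\Ro_{jk}-\no_j\Ro_{ak}=\no^l\Ro_{ajkl}$ does the job: the right-hand side vanishes by~\eqref{Rcond} plus parallelness, and $\no_j\Ro_{ak}=0$ since $\Ro_{ak}=0$ and $\cal N$ is parallel.) This is the step you should replace your contracted-Bianchi/commutation argument with; the Cotton--Bach formula you mention is not needed.
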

\begin{proof}
For Walker manifold, the differential Bianchi identity ensures that  condition (\ref{Rcond}) also implies that $\cal N\hook\no\Ro=0$. This on the other hand implies that 
 $\no_a\P_{ij}=0$, which  for a null Ricci Walker metrics this is equivalent to $\cal L_{\e_a}\P=0$. 
\end{proof}
Next we prove a result that strengthens Theorem \ref{theo2intro} for this class:
\begin{proposition}\label{liehprop}
Let $\ggo$ be a null Ricci Walker metric with parallel null distribution $\cal N$ and Schouten tensor $\P$ satisfying condition (\ref{Rcond}) for its curvature. 

Then an ambient  metric $\widetilde{\gg}=2\der t\der(\rho t)+t^2\gg(\rho)$ for $[\ggo]$ in the sense of Definition \ref{ambientdef}  is given by $\gg=\ggo+\h$, where 
 $\h=\h(\rho)$ 
 satisfies 
 $\Im(\h)\subset \cal N$,  $\cal L_X\h=0$ for all $X\in \cal N$ and solves the linear PDE
 \begin{equation}
\label{fgwalkerh1}
\rho \ddot h_{ij}-\tfrac{n-2}{2}\dot h_{ij} - \tfrac{1}{2}\Bo h_{ij}
+
\Ro_{ij}=O(\rho^m),
\end{equation}
 for all $m$ if $n$ is odd and  for $m\le \frac{n}{2}-1$ if $n$ is even. When $n$ is even, 
 the obstruction tensor is given by
 \[\cal O_{ij}= c_n\  \Bo^m R_{ij},\]
 where $c_n$ is a non-zero constant depending on $n$ and $\Bo^m$ is the $m$-th power of the tensor Laplacian of $\ggo$.
 In particular, 
 \[
 \Im(\cal O)\subset \cal N, \qquad
 \cal L_X\cal O=0, \quad \text{for all $X\in \cal N$.}\]
\end{proposition}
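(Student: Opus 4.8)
The plan is to combine Theorem~\ref{theo2} with the linearisation result in Theorem~\ref{linprop} (via Corollary~\ref{lincor} and Theorem~\ref{fgwalkerthm}), and then to exploit the extra hypothesis (\ref{Rcond}) together with the preceding Lemma to kill the remaining quadratic terms. First I would invoke Theorem~\ref{theo2}: since $\ggo$ is null Ricci Walker with parallel $\cal N$, any ambient metric in normal form has $\h=\gg-\ggo$ satisfying $\operatorname{div}(\h)=O(\rho^m)$ and $\Im(\h)\subset\cal N\bmod O(\rho^m)$, with $m=\infty$ in the odd case and $m=\frac n2$ in the even case. In particular the hypotheses of Corollary~\ref{lincor} hold order by order, so the Fefferman--Graham equations reduce to (\ref{fg1nilp3}), and as in the proof of Theorem~\ref{fgwalkerthm} the term $\Ro^k_{~(i}h_{j)k}$ vanishes because $\Im(\h),\Im(\P)\subset\cal N$ and $\cal N$ is totally null. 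What remains to be shown is that the quadratic piece $Q^{(2)}_{ij}(\h)$ vanishes, i.e.\ that (\ref{fgwalker}) degenerates to (\ref{fgwalkerh1}).

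The key step is to upgrade the pointwise $\Im(\h)\subset\cal N$ to the stronger $\cal L_X\h=0$ for all $X\in\cal N$, equivalently $\del_a h_{\bb\dd}=0$ in Walker coordinates. I would argue inductively on the Taylor coefficients $\gtop{m}$ of $\h$ in $\rho$. The base case is $\del_a h_{ij}|_{\rho=0}=\del_a\cdot 0=0$ for the zeroth coefficient, and $\dot\h|_{\rho=0}=2\P$, whose $\cal N$-derivative vanishes by the Lemma preceding the Proposition (condition (\ref{Rcond}) forces $\no_a\P=0$, hence $\cal L_{\e_a}\P=0$). For the inductive step I would differentiate the reduced Fefferman--Graham equation (\ref{fg1nilp3}) in the $x^a$-direction: the recursion $(m-\tfrac n2)\gtop{m}_{ij}=\dots$ expresses $\gtop{m}$ through $x^a$-derivatives of lower-order coefficients, the curvature term $\Ro_{kijl}h^{kl}$, the tensor Laplacian $\Bo\gtop{m-2}$, and the quadratic terms built from lower $\gtop{b}$'s. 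Using the Walker structure (Lemma~\ref{walkerlemma2}, and that $\cal N$ being $\no$-parallel together with (\ref{Rcond}) gives $\cal L_{\e_a}\Ro = 0$ on the relevant components), every term on the right is annihilated by $\cal L_{\e_a}$ once the lower-order coefficients are, so $\cal L_{\e_a}\gtop{m}=0$ as well. With $\cal L_X\h=0$ for all $X\in\cal N$ in hand, I apply Theorem~\ref{linprop}: the hypotheses (\ref{nabab}), (\ref{nabxb}) hold because $\cal N$ is parallel (so $\cal K=\cal N^\perp$ is parallel too, hence involutive and $\no_XY\in\cal K$, $\no_ZY\in\cal K^\perp$), and (\ref{liehm}) is exactly $\cal L_Y\h=0$; therefore $Q^{(2)}_{ij}(\h)\equiv0$ and (\ref{fgwalker}) collapses to the linear PDE~(\ref{fgwalkerh1}).

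It remains to identify the obstruction tensor. Here I would solve (\ref{fgwalkerh1}) recursively: writing $\h=\sum_{m\ge1}\tfrac1{m!}\gtop m\rho^m$ and matching powers of $\rho$, the coefficient of $\rho^{m-1}$ gives, for $i\in\{1,\dots,n-p\}$ or in general after using $\Ro_{ij}$ has image in $\cal N$, a relation of the form $(m-\tfrac n2)\gtop m_{ij}=\tfrac12\Bo\gtop{m-2}_{ij}-\Ro_k{}^{\ l}_{ij}\gtop{m-1}_{kl}+\dots$; but since $\cal N\hook\Ro=0$ the curvature term drops out on the $\cal N$-valued part, and an easy induction (the $m=0,1$ data being $\gtop0=0$, $\gtop1=2\P=\tfrac2{n-2}\Ro$) shows $\gtop m = c'_m\,\Bo^{\,(m-1)}\Ro$ for explicit nonzero constants $c'_m$, as long as $m<\tfrac n2$. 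When $n$ is even the recursion breaks at $m=\tfrac n2$: by (\ref{defobstr}) (equivalently the computation in Step~5 of the proof of Theorem~\ref{theo2}) the obstruction is $\cal O_{ij}=c_n\,\del_\rho^{\frac n2-1}R_{ij}(\widetilde\gg)|_{\rho=0}$, which by the same argument equals $c_n\,\Bo^{m}R_{ij}$ with $m=\tfrac{n-2}2$ and $R_{ij}=\Ro_{ij}$ the Ricci tensor of $\ggo$. Finally $\Im(\cal O)\subset\cal N$ is immediate since $\Im(\Ro)\subset\cal N$ and $\cal N$ is parallel (so $\Bo$ preserves $\cal N$-valued tensors), and $\cal L_X\cal O=0$ for $X\in\cal N$ follows because $\cal L_{\e_a}$ commutes with $\Bo$ on the relevant components and annihilates $\Ro_{ij}=\tfrac{n-2}{?}\P_{ij}$ by the Lemma. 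The main obstacle I anticipate is the inductive propagation of $\cal L_{\e_a}\gtop m=0$: one must check carefully that differentiating the recursion in the $x^a$-direction does not produce terms involving $\cal L_{\e_a}$ of curvature quantities that fail to vanish — this uses both that $\cal N$ is parallel \emph{and} condition (\ref{Rcond}), and the Walker-frame identities of Lemma~\ref{walkerlemma2} to control which components of $\no\Ro$ actually enter.
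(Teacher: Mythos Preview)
Your proposal is correct and follows essentially the same route as the paper: invoke Theorem~\ref{theo2} to get $\Im(\h)\subset\cal N$ and divergence-freeness, reduce to the quadratic equation~(\ref{fgwalker}) via Theorem~\ref{fgwalkerthm}, then prove $\no_a\gtop{m}=0$ by induction on $m$ by applying $\no_a$ to the equation, differentiating $(m-1)$ times in $\rho$, and using (\ref{Rcond}) (plus Bianchi) to commute $\no_a$ through and annihilate the curvature term; finally read off the obstruction tensor from the resulting linear recursion. One small slip: in your recursion the Laplacian term is $\tfrac12\Bo\gtop{m-1}$, not $\Bo\gtop{m-2}$, but this does not affect the argument.
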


\begin{proof}
From Theorem \ref{theo2} we know that $\h$ in the ambient metric satisfies (or, if $n$ is even, can be chosen such) that $\Im(\h)\subset \cal N$. The remaining properties of 
$\h= \sum_{m \geq 1} \frac{1}{m!} \gtop{m}\, \rho^m$  are proved in a similar way by induction over $m$ as in  the proof of Theorem \ref{theo2}. But now the computations are simplified, as we can use equations (\ref{fgwalker}) in Theorem \ref{fgwalkerthm}, which are equivalent to the Fefferman-Graham equations: 

Applying $\no_a$ to equation (\ref{fgwalker}), differentiating it $(m-1)$ times with respect to $\rho$, for $m\le \frac{n}{2}-1$ when $n$ is even,  and using the induction hypothesis yields
\[
0
=
(m-\frac{n}{2})\no_a\gtop{m}_{ij}
-\frac{1}{2} \go^{kl}\no_a\no_k\no_l \gtop{m-1}_{ij} 
+
\no_a\Ro_{kijl}\gtop{m-1}^{kl}
=
(m-\frac{n}{2})\no_a\gtop{m}_{ij},
\]
Here we use the Bianchi identity and that  (\ref{Rcond}) allows to commute $\no_a$ with $\no_k$. This equation shows $\no_ah_{ij}=\cal O(\rho^m)$ for all $m$ when $n$ is odd and for $m=\frac{n}{2}$ when $n$ is even. 
Moreover, when $n$ is even, the terms $\gtop{m}$ for $m\ge\frac{n}{2}$ are not determined by the Fefferman-Graham equations. So we can choose them in a way that $\cal L_{\e_a}\gtop{m}_{ij}=\partial_a(h_{ij})=0$ which is equivalent to $\no_ah_{ij}$.
With this and the assumption $\Ro_{aijk}=0$, equation (\ref{fgwalker}) reduces to equation (\ref{fgwalkerh1}). Note also that such a $\h$ is divergence free.

In order to obtain the formula for the obstruction tenser when $n$ is even, we write equation (\ref{fgwalkerh1}) in terms of the 
$\gtop{m}_{ij}$ and obtain
\[
m \gtop{1}_{ij}=\Ro_{ij},\qquad 2(k-m)\gtop{k+1}_{ij}=\Bo \gtop{k}_{ij},\quad\text{ for $k=1, \ldots, m-1$.}\]
This shows that the term of order $\rho$ in (\ref{fgwalkerh1}), which is the obstruction tensor, is equal to $c_n\ \Bo^m R_{ij}$ with a nonzero constant $c_n$.
\end{proof}
Note that for $\h=h_{\aa\cc}\Theta^\aa\Theta^\cc$ with  $\no_ah_{ij}=0$ the term $\Bo h_{ij}$, i.e. the wave operator of $\ggo$ applied to the tensor $\h$   in \eqref{fgwalkerh1}, simplifies to
\[\Lo(h_{\bb\dd})=\go^{AC}\no_A\no_C (h_{\bb\dd}),\]
which is the wave operator for the metric $g_{AC}\Theta^A\Theta^C$ in $n-2p$ dimensions applied to the component functions $h_{\aa\cc}$ of $\h$.
Finally, the vanishing of the curvature terms $\Ro_{aijk}$ 
implies that the system \eqref{fgwalkercomp}, in addition to becoming linear, decouples to $\frac{p+1}{2}$ single equations on the  $\frac{p+1}{2}$  components $h_{\bb\dd}$. These equation only differ in  their inhomogeneity:

\begin{corollary}\label{walkercorol}
Let $(M,\ggo)$ be a null Ricci Walker-manifold with parallel totally null distribution $\cal N$ and $\Im(\P)\subset \cal N$ and such that
 that 
\[
X\hook\Ro=0,\qquad\text{ for all }X\in \cal N,
\]
where $\Ro$ is the curvature tensor of $\ggo$. 
Then, the  an ambient metric metric $\widetilde{\gg}=2\der(\rho t)\der t+t^2(\ggo+\h)$  for $[\ggo]$ is given by 
$\h$ whose components $h_{\bb\dd}$ of $\h$ in a basis as in Proposition \ref{walkerprop} satisfy the following inhomogeneous linear PDE
 \begin{eqnarray}
\label{fg1neutral2}
\Delta_-(h_{\bb\cc})+2
\Ro_{\bb\dd}
&=&O(\rho^m),
\end{eqnarray}
where \oe \ and where
 $\Delta_-$ is the linear second order differential operator defined by
 \begin{eqnarray}
\label{fg1neutral2do}
\Delta_-(f)&=&
2\rho \ddot f+(2-n)\dot f
-\Lo(f)
\end{eqnarray}
 for the function $f=f(x^{p+1}, \ldots ,x^n,\rho)$ and with
 $\Lo(f)=\go^{AC}\no_A\no_C (f)=\go^{AC}\e_A(\e_C(f))$.
\end{corollary}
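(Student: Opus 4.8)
The plan is to apply Proposition \ref{liehprop} directly: under the hypothesis $X\hook\Ro=0$ for all $X\in\cal N$, the proposition already guarantees that there is an ambient metric $\widetilde{\gg}=2\der(\rho t)\der t+t^2(\ggo+\h)$ with $\Im(\h)\subset\cal N$, $\cal L_X\h=0$ for all $X\in\cal N$, and $\h$ solving the linear PDE \eqref{fgwalkerh1}, namely $\rho\ddot h_{ij}-\tfrac{n-2}{2}\dot h_{ij}-\tfrac12\Bo h_{ij}+\Ro_{ij}=O(\rho^m)$. So the real content of the corollary is purely a translation of this tensorial equation into a scalar equation on the component functions $h_{\bb\cc}$, using the frame of Proposition \ref{walkerprop}. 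First I would record that, by Lemma \ref{alglem} / the conventions in Section \ref{conventions}, writing $\h=h_{\aa\cc}\Theta^\aa\Theta^\cc$ the only free components are the $\tfrac{p(p+1)}{2}$ functions $h_{\bb\cc}=h_{\cc\bb}$, and $\cal L_X\h=0$ for $X\in\cal N=\mathrm{span}(\e_1,\dots,\e_p)$ is exactly $\e_a(h_{\bb\cc})=\del_a(h_{\bb\cc})=0$, so these functions depend only on $x^{p+1},\dots,x^n$ and $\rho$.

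The key computational step is to simplify $\Bo h_{ij}=\no^k\no_k h_{ij}$ on the relevant components $(i,j)=(\bb,\cc)$. Here I would invoke the remark immediately preceding the corollary: for $\h$ with $\no_a h_{ij}=0$ (equivalently $\cal L_{\e_a}\h=0$), the Walker structure of the connection (Lemma \ref{walkerlemma2}, in particular the vanishing Christoffel symbols $\Gamma_{~a\cc}^\bb$ contracted against the null directions and the fact that $\cal K$, $\cal N$ are parallel) forces the $\go^{a\cc}$-part of the trace $\no^k\no_k$ to drop out, leaving only the $\go^{AC}$-part, so that $\Bo h_{\bb\cc}=\Lo(h_{\bb\cc})$ where $\Lo(f)=\go^{AC}\no_A\no_C(f)=\go^{AC}\e_A(\e_C(f))$ is the wave operator of the $(n-2p)$-dimensional metric $g_{AC}\Theta^A\Theta^C$ applied to the scalar $h_{\bb\cc}$. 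The last equality $\go^{AC}\no_A\no_C(f)=\go^{AC}\e_A(\e_C(f))$ on a scalar $f$ with $\e_a(f)=0$ uses $\Gamma_{~AC}^\bb f$-terms vanishing because $f$ is annihilated by $\e_a$ and $\Gamma_{~AC}^C=0$ under the extra curvature hypotheses (this is where condition \eqref{w5}-type vanishing from Proposition \ref{ricpropwalker} would be cited if needed, but here it follows just from $\cal K^\perp$ parallel and the frame relations \eqref{kint}).

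Next I would substitute into \eqref{fgwalkerh1}: multiplying by $-2$ (so that the coefficient of $\ddot h$ is $2\rho$ and of $\Lo(f)$ is $-1$, matching the definition of $\Delta_-$ in \eqref{fg1neutral2do}), the $(\bb,\cc)$-component becomes
\[
2\rho\,\ddot h_{\bb\cc}+(2-n)\dot h_{\bb\cc}-\Lo(h_{\bb\cc})+2\Ro_{\bb\cc}=O(\rho^m),
\]
which is precisely $\Delta_-(h_{\bb\cc})+2\Ro_{\bb\cc}=O(\rho^m)$. Finally I would note that because the curvature term $\Ro_{aijk}$ vanishes and $\Ro_{\bb\cc}$ depends only on $x^{p+1},\dots,x^n$ (the only surviving Ricci components by \eqref{ricwalker}, and by Proposition \ref{ricpropwalker} / the hypothesis these are the ones allowed), the system \eqref{fgwalkerh1} decouples: there is no coupling between distinct components $h_{\bb\cc}$, so we obtain $\tfrac{p(p+1)}{2}$ independent copies of the same scalar equation $\Delta_-(f)=-2\Ro_{\bb\cc}+O(\rho^m)$, differing only in the inhomogeneity. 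The ambient metric is then reconstructed as $\widetilde{\gg}=2\der(\rho t)\der t+t^2(\ggo+\h)$ with $\h=h_{\aa\cc}\Theta^\aa\Theta^\cc$, completing the proof.

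The main obstacle — really the only non-bookkeeping point — is justifying the reduction $\Bo h_{\bb\cc}=\Lo(h_{\bb\cc})$ cleanly: one must check that all Christoffel contributions mixing in the $\e_a$- and $\e_\cc$-directions either vanish by Lemma \ref{walkerlemma2} or are killed by $\e_a(h_{\bb\cc})=0$, and that the tensor Laplacian acting on the $(0,2)$-tensor $\h$ reduces, on these components, to the scalar Laplacian of the transverse metric acting componentwise. This is routine given Lemma \ref{walkerlemma2} but is the one place where care is needed; everything else is substitution and invoking Proposition \ref{liehprop}.
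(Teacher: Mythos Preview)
Your proposal is correct and follows essentially the same approach as the paper: the paper's ``proof'' of the corollary is precisely the discussion paragraph preceding it, which invokes Proposition~\ref{liehprop} to get the linear equation~\eqref{fgwalkerh1}, observes that $\Bo h_{\bb\dd}$ reduces to $\Lo(h_{\bb\dd})=\go^{AC}\no_A\no_C(h_{\bb\dd})$ when $\no_a h_{ij}=0$, and notes that the vanishing $\Ro_{aijk}=0$ decouples the system into $\tfrac{p(p+1)}{2}$ scalar equations---exactly your outline. One trivial slip: you write ``multiplying by $-2$'' but then (correctly) describe the result of multiplying~\eqref{fgwalkerh1} by $+2$.
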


%
A special case of this situation is  when  the parallel null distribution has rank one, i.e.~$ p=1$ and $\cal N=\mathbb{R}\cdot \e_1$. Here the property $\cal L_{\e_1}\h=0$  is directly by $\h=h(\Theta^n)^2$ being divergence free. Indeed, we have 
\[\mathrm{div}(\h)=
\no_kh^k_{~i}
=
\cal L_{\e_1}\h
=
\del_1(h).\]
Moreover, if the rank of $\cal N$ is one, also  the curvature terms $\Ro_{iklj}$ that occur in equation (\ref{fgwalker}) have to vanish:
\begin{lemma}\label{curvricwalker}
If $\gg$ is a null Ricci Walker metric
and if the null parallel distribution $\cal N$ has rank one, then  $R_{\aa b d\cc}=0$.
\end{lemma}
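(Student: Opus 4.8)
The plan is to identify $R_{\aa b d\cc}$ with an off-diagonal Ricci component up to a nonzero factor and then to kill that Ricci component using the null Ricci hypothesis. First I would fix, as in Proposition \ref{walkerprop}, a frame $(\e_1,\dots,\e_n)$ adapted to $\cal N$. Since $p=1$, the barred indices $\aa,\bb,\cc,\dd$ take only the value $n$ while $a,b,c,d$ take only the value $1$; thus $\cal N=\mathrm{span}(\e_1)$ is spanned by the single null vector $\e_1$, $\cal K=\cal N^\perp=\mathrm{span}(\e_1,\dots,\e_{n-1})$, and $R_{\aa b d\cc}=R_{n11n}$.

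Next I would use the Walker-metric formula \eqref{ricwalker} for the Ricci tensor. With $p=1$ the sum defining $R_{\aa b}$ degenerates to the single term $R_{\aa b}=g^{\cc d}R_{d\aa b\cc}=g^{n1}R_{1n1n}$. By the antisymmetry of the curvature tensor in its first pair of indices, $R_{1n1n}=-R_{n11n}=-R_{\aa b d\cc}$, so $R_{\aa b}=-g^{n1}R_{\aa b d\cc}$. Because $\cal N$ is totally null while $\cal K=\cal N^\perp$ has codimension $1$, the pairing $g_{1n}=\gg(\e_1,\e_n)$ is nonzero, hence so is its reciprocal $g^{n1}$; therefore proving $R_{\aa b d\cc}=0$ is equivalent to proving $R_{\aa b}=R_{n1}=0$.

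Finally I would read $R_{n1}=0$ off the null Ricci hypothesis. Writing $\Ric^\sharp$ for the Ricci endomorphism, the assumption that $\gg$ is null Ricci Walker means $\Im(\Ric^\sharp)\subset\cal N$. Hence $R_{n1}=\gg(\Ric^\sharp\e_n,\e_1)$ lies in $\gg(\cal N,\cal N)$, which is $\{0\}$ since $\e_1\in\cal N$ and $\cal N$ is totally null. Combining with the previous step gives $R_{\aa b d\cc}=0$.

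I do not expect a real obstacle here: the content is essentially bookkeeping, and the only things to watch are that the index ranges genuinely collapse when $p=1$, that the single surviving term of \eqref{ricwalker} really is $R_{\aa b d\cc}$ up to the nonzero factor $g^{n1}$ and a curvature symmetry sign, and that $g^{n1}\neq0$. As a cross-check one can also argue without \eqref{ricwalker}: since $\cal N$ is parallel and one-dimensional, $R(\e_n,\e_1)\e_1=\omega\,\e_1$ for some scalar $\omega$; taking the trace of $X\mapsto R(X,\e_n)\e_1$ in the frame $(\e_i)$ shows $\Ric(\e_n,\e_1)=-\omega$, while $R_{\aa b d\cc}=\gg(R(\e_n,\e_1)\e_1,\e_n)=\omega\,g_{1n}$; together with $\Ric(\e_n,\e_1)=0$ this again forces $R_{\aa b d\cc}=0$.
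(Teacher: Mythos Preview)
Your proposal is correct and follows essentially the same route as the paper: both arguments collapse the Ricci component $R_{\aa b}=R_{a\cc}$ to a single curvature term $g^{\bb d}R_{\bb a\cc d}$ when $p=1$, then use the null Ricci hypothesis to conclude. The paper re-derives that reduction from the definition of $\Ric$ and \eqref{curvwalker}, whereas you quote it from \eqref{ricwalker}; the content is the same, and your cross-check via $R(\e_n,\e_1)\e_1\in\cal N$ is a valid alternative phrasing (the sign you get for $\Ric(\e_n,\e_1)$ depends on conventions but is irrelevant for the vanishing).
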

\begin{proof}
This is an immediate consequence of equations (\ref{ricwalker}):
\[
0
=
R_{a\cc}
=
g^{\bb d}( R_{\bb a\cc d}+ R_{d a\cc \bb})+ g^{AB}R_{A a\cc B}
=
g^{\bb d} R_{\bb a\cc d},
\] because of equation (\ref{curvwalker}).
\end{proof}

Hence,  we obtain: 
\begin{corollary}\label{linecor}
Let $(M,\ggo)$ be a null Ricci Walker manifold with a parallel null line $\cal N=\mathbb{R}\cdot \e_1$, a  
a  frame $\e_1=\del_1,\e_B,\e_n$   with a dual frame $\Theta^1,\Theta^B,\Theta^n$ as in Proposition \ref{walkerprop} and such that 
 $\Im(\P)\subset \cal N$, i.e., $\Ric = f (\Theta^n)^2$, for a function $f$ with $\del_1(f)=0$.
Then an ambient  metric $\widetilde{\gg}=2\der t\der(\rho t)+t^2\gg(\rho)$ for $[\ggo]$ in the sense of Definition \ref{ambientdef}  is given by $\gg=\ggo+\h$, where 
 $\h=h(\rho,x^i)(\Theta^n)^2$ 
that satisfies  $\del_1(h)=0 $ and the following linear PDE
\begin{equation}
\label{fgwalkercomp}
\begin{array}{rcl}
\Delta_-(h)+2
f
&=&O(\rho^m),
\end{array}
\end{equation}
where $\Delta_-$ was defined in (\ref{fg1neutral2do}).
\end{corollary}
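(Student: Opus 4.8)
The plan is to derive Corollary~\ref{linecor} as the rank-one specialisation of Corollary~\ref{walkercorol}, checking that in this case the additional hypothesis $X\hook\Ro=0$ for all $X\in\cal N$ is automatic, so that no extra assumption beyond $\Im(\P)\subset\cal N$ is needed. First I would note that the frame $\e_1=\del_1,\e_B,\e_n$ with dual frame $\Theta^1,\Theta^B,\Theta^n$ is precisely the Walker frame of Proposition~\ref{walkerprop} in the special case $p=1$, so the index ranges collapse: the only "barred'' index is $\aa=n$, and $\Im(\P)\subset\cal N$ forces $\Ric=\P\cdot(\text{const})$ to be a multiple of $(\Theta^n)^2$, say $\Ric=f(\Theta^n)^2$; the scalar-flatness from the Corollary after Lemma~\ref{alglem} makes $f$ proportional to the single Ricci component. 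The divergence-free condition on $\h=h\,(\Theta^n)^2$ is, as the text already observes, equivalent to $\del_1(h)=0$, because $\mathrm{div}(\h)=\no_kh^k_{~i}=\cal L_{\e_1}\h=\del_1(h)$; and since $\Ric$ is a constant multiple of $\P$ for a null Ricci Walker metric, $\del_1(f)=0$ likewise follows from $\no_1\P=0$ (equivalently from Lemma~\ref{curvricwalker}).

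The key step is to verify $X\hook\Ro=0$ for $X\in\cal N=\mathbb{R}\cdot\e_1$. By Lemma~\ref{walkerlemma2} we already have $R_{ijab}=R_{ijaB}=0$, which in the rank-one case means $R_{ij1k}=0$ whenever $k\in\{1\}\cup\{B\text{-range}\}$, so the only potentially nonzero component with a $\cal N$-index is $R_{\aa 1 \cc d}=R_{n1nd}$-type, i.e.\ $R_{a\cc b\dd}$ with the $p=1$ ranges, and Lemma~\ref{curvricwalker} says exactly that this vanishes: $R_{\aa b d\cc}=0$. Hence every component of $\Ro$ with a leg along $\e_1$ vanishes, i.e.\ $\e_1\hook\Ro=0$, which is the hypothesis of Corollary~\ref{walkercorol} (and of Proposition~\ref{liehprop}). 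Therefore that hypothesis is not an extra assumption here but a consequence of the rank-one null Ricci Walker structure, and I would state this explicitly.

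It then remains only to transcribe the conclusion. Applying Corollary~\ref{walkercorol} with $p=1$: the range of the "$A,B,C$'' indices is $\{2,\dots,n-1\}$, there is a single component $h=h_{nn}$, and the single inhomogeneous equation \eqref{fg1neutral2} becomes $\Delta_-(h)+2\Ro_{nn}=O(\rho^m)$ with $\Ro_{nn}$ the unique surviving Ricci component, which we have written as $f$; so the system reduces to $\Delta_-(h)+2f=O(\rho^m)$ with $\Delta_-$ the operator of \eqref{fg1neutral2do}, namely $\Delta_-(h)=2\rho\ddot h+(2-n)\dot h-\Lo(h)$ where $\Lo=\go^{AC}\e_A\e_C$ acts on the component function. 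Since $\del_1 h=0$, this $\h$ is divergence free, and $\Im(\h)\subset\cal N$ is built in, so $\widetilde{\gg}=2\der t\,\der(\rho t)+t^2(\ggo+\h)$ is an ambient metric in the sense of Definition~\ref{ambientdef} by Corollary~\ref{walkercorol}. I expect the only mild subtlety — the "main obstacle,'' such as it is — to be the bookkeeping that confirms the coefficient $2$ in front of $f$ and the precise normalisation identifying $\Ro_{nn}$ with the scalar $f$ in $\Ric=f(\Theta^n)^2$, together with checking that the rank-one collapse of Proposition~\ref{walkerprop}'s frame really does put us under the hypotheses of Corollary~\ref{walkercorol} verbatim; everything else is a direct quotation of the already-established results.
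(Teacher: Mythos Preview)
Your proposal is correct and follows essentially the same route as the paper: the text immediately preceding Corollary~\ref{linecor} observes that for $p=1$ the divergence-free condition reduces to $\del_1(h)=0$, then invokes Lemma~\ref{curvricwalker} (together with the already-known $R_{ijab}=R_{ijaB}=0$ from Lemma~\ref{walkerlemma2}) to conclude $\e_1\hook\Ro=0$, so that Corollary~\ref{walkercorol} applies verbatim with a single component $h_{nn}=h$ and inhomogeneity $\Ro_{nn}=f$. Your identification of Lemma~\ref{curvricwalker} as the key input and your bookkeeping on the index collapse are exactly what the paper does.
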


This corollary and Proposition \ref{liehprop} imply the  
 statements in Corollary \ref{introcorollary}. Note that for null Ricci Walker metrics we have that $\cal L_X\cal O=\nabla_X\cal O=0$ for all $X\in \cal N$. A construction method for  metrics satisfying the assumptions is provided by Proposition~\ref{ricpropwalker}. Explicit examples will be constructed in the next section.


Finally we show an example for which the condition (\ref{Rcond}) is not satisfied and analyse its Fefferman-Graham equations. It turns out that they are not linear in $\h$.
  
\begin{example}
We consider the following Walker metric in signature $(2,2)$ on $M=\mathbb{R}^4\ni(x^1,x^2,y^1,y^2)$:
\[
\gg= 2\der x^1\der y^1 + 2\der x^2\der y^2 + 2(x^1 \der y^1)^2 +2(x^2 \der y^2)^2 -4 x^1x^2 \der y^1 \der y^2\ =\ 
2\left( \Theta^1 \Theta^{\bar{1}}+\Theta^2 \Theta^{\bar{2}}\right)
,\]
where in our notation above we have a co-frame and its dual frame given by
\[
\begin{array}{rclcrcl}
\Theta^1&=&\der x^1 +(x^1)^2 \der y^1 -  2x^1 x^2 \der y^2,
&&
\e_1&=&\tfrac{\partial}{\partial x^1},
\\[2mm]
\Theta^2&=&\der x^2 +(x^2)^2 \der y^2-  2x^1 x^2 \der y^1,
&&\e_2& =& \tfrac{\partial}{\partial x^2},
\\[2mm]
\Theta^{\bar{1}}&=&\der y^1,&&
\quad \e_{\bar{1}}&=&\tfrac{\partial}{\partial y^1} -( x^1)^2
\tfrac{\partial}{\partial x^1}+2x^1 x^2 \tfrac{\partial}{\partial x^2},
\\[2mm]
 \Theta^{\bar{2}}& =& \der y^2, &&
\e_{\bar{2}}&=&\tfrac{\partial}{\partial y^2} -( x^2)^2 \tfrac{\partial}{\partial x^2}
+2x^1 x^2 \tfrac{\partial}{\partial x^1}.
\end{array}\]
This is a Walker metric with parallel null distribution $\cal K=\cal K^\perp=\mathrm{span} (\e_1,\e_2)$. 
Indeed, we have
\[
\nabla \e_1 = 2 (x^1 \der y^1-x^2\der y^2) \otimes \e_1 - 2x^2 dy^1 \otimes \e_2,
\]
and 
\[
\nabla \e_2 = -2 (x^1 \der y^1-x^2\der y^2) \otimes \e_2 - 2x^1 dy^2 \otimes \e_1.
\]
Then by direct computation or using Proposition \ref{ricpropwalker} we see that the 
 Ricci tensor of $\gg$  is given by
\[Ric = -12\left(( x^1 \Theta^{\bar{1}})^2- 4 x^1x^2\Theta^{\bar{1}}\Theta^{\bar{2}}+ ( x^2 \Theta^{\bar{2}})^2 \right),
\]
and hence $g$ is null Ricci Walker. 
The curvature tensor has the following non-vanishing terms
\[R_{1\bar{1}\bar{1}1}= - R_{1\bar{1}\bar{2}2}= R_{2\bar{2}\bar{2}2}=2.\]
Moreover, the Bach tensor, which in dimension $4$ is the obstruction tensor does not vanish,
\[\cal O=-144\left( (x^1\Theta^{\bar{1}})^2- 4x^1 x^2 \Theta^{\bar{1}} \Theta^{\bar{2}}    + (x^2 \Theta^{\bar{2}})^2\right).\]
Hence, there is no smooth Ricci-flat ambient metric and  we can only find an ambient metric  whose Ricci tensor is of first order in $\rho$.
From Theorem \ref{theo2} we know that the ambient metric is of the form 
$\widetilde{\gg} = 2\der t\, \der(\rho t) + t^2 (\gg+\h)$, where 
$\h=\h(x^1,x^2,y^1,y^2,\rho)$ is of the form
\[\h =A (x^1,x^2,y^1,y^2,\rho) (\Theta^{\bar{1}})^2 - 2 B(x^1,x^2,y^1,y^2,\rho) \Theta^{\bar{1}}\Theta^{\bar{2}}) + C(x^1,x^2,y^1,y^2,\rho)  (\Theta^{\bar{12}})^2,\]
with 
$A_{\rho=0}=B_{\rho=0}=C_{\rho=0}=0$ and 
\[0\ =\ \mathrm{div}(\h)
\ =\
\left(\tfrac{\partial A}{\partial x^1} - \tfrac{\partial B}{\partial x^2}\right)\Theta^{\bar{1}}
+ \left(\tfrac{\partial B}{\partial x^1} - \tfrac{\partial C}{\partial x^2}\right)\Theta^{\bar{2}}.
\]
A direct computation
shows that the Fefferman-Graham equations for this example remain non-linear. For example, the $\bar{1}\bar{1}$-component of  equation~(\ref{fgwalker}) is
\[
\rho \ddot{A} -\dot A - 2 A - 
+\tfrac{1}{2}A \tfrac{\partial^2 A}{(\partial x^1)^2}
-
B \tfrac{\partial^2 A}{\partial x^2\partial x^1}
+\tfrac{1}{2}C    \tfrac{\partial^2 A}{(\partial x^2)^2}
+\tfrac{1}{2}\left( \tfrac{\partial A}{\partial x^1}\right)^2
-
\tfrac{\partial A}{\partial x^2}\tfrac{\partial B}{\partial x^1}
+
\tfrac{1}{2}\left( \tfrac{\partial B}{\partial x^1}\right)^2
\]
\[+
(x^1)^2\tfrac{\partial^2 A}{(\partial x^1)^2}
-
4 x^2 x^1 \tfrac{\partial^2 A}{\partial x^2\partial x^1}
-
 \tfrac{\partial^2 A}{\partial y^1\partial x^1}
+
4 x^2
 \tfrac{\partial A}{\partial x^2}
-4 x^1
 \tfrac{\partial A}{\partial x^1}
 +
 (x^2)^2
 \tfrac{\partial^2 A}{(\partial x^2)^2}
-
\tfrac{\partial^2 A}{\partial y^2\partial x^2}
-4x^1
\tfrac{\partial B}{\partial x^2}
 -12(x^1)^2
 \]
In this example
our ansatz (\ref{lieh}), i.e., that
\begin{equation}\label{liehex}\cal L_{\e_1} \h=\cal L_{\e_2}\h=0,\end{equation}
  does not yield a solution to the Fefferman-Graham equations, i.e., to $Ric(\tilde{g})= O(\rho)$. Indeed, the ansatz (\ref{liehex}) is equivalent to the components  $A$, $B$ and $C$ being independent of $x^1$ and $x^2$, and hence  the Ricci tensor of $\tilde{g}$ has the components
\begin{align*}
&\left( \rho \ddot{A}(y^1,y^2,\rho) -\dot A(y^1,y^2,\rho) - 2 A(y^1,y^2,\rho)  -12(x^1)^2\right)(\Theta^{\bar{1}})^2
\\
+& 2\left( \rho \ddot{B}(y^1,y^2,\rho) -\dot B(y^1,y^2,\rho) - 2 B(y^1,y^2,\rho)  -12x^1x^2\right)\Theta^{\bar{1}} \Theta^{\bar{2}}
\\
+&\left( \rho \ddot{C}(y^1,y^2,\rho) -\dot C(y^1,y^2,\rho) - 2 C(y^1,y^2,\rho)  -12(x^2)^2\right)(\Theta^{\bar{2}})^2,
\end{align*}
which cannot be of the form $\rho Q$ for $Q$ a tensor on $M$.
Instead, a solution is for example given by
\[\h =-12 \rho\left((x^1 \Theta^{\bar{1}})^2 -4x^1x^2\Theta^{\bar{1}}\Theta^{\bar{2}} + (x^2 \Theta^{\bar{2}})^2\right),\]
which is divergence free but does not satisfy the ansatz (\ref{lieh}). 
With this $\h$ the  ambient metric $\widetilde{\gg}=2\der(\rho t)\der t+t^2(\ggo+\h)$ has Ricci tensor
\[Ric(\widetilde{\gg})=
-144\rho( 3\rho-1)  \left((x^1\Theta^{\bar{1}})^2
-4 x^1 x^2 \Theta^{\bar{1}}\Theta^{\bar{2}}) +(x^2\Theta^{\bar{2}})^2
\right)
=
\rho( 3\rho-1) \cal O
.\]

\end{example}

\section{Examples with explicit ambient metrics}
\label{examples}
In this section we will provide examples of conformal classes of null Ricci Walker metrics  for which we find  explicit solutions to equation  \eqref{fg1neutral2} obtaining explicit examples of Ricci-flat ambient metrics.
\subsection{Solving the homogeneous equation}

Equation \eqref{fg1neutral2} is a linear, inhomogeneous PDE for each of the functions $h_{\aa\cc}$ given by the linear differential operator
\[\Delta_-=
2\rho \partial^2_\rho +(2-n)\partial_\rho
-\Lo.
\]
In the section we will find metrics for which we get an explicit solution of \eqref{fg1neutral2}. Before this, we start by providing the solution to the homogeneous equation.
\begin{lemma}\label{sol-lemma}
Let $M$ be a smooth manifold of dimension $n$ and $\cal D$ some linear differential operator on $M$. For a function $F\in C^\infty (M)$ we define the functions $F_\pm\in C^\infty(M\times (-\epsilon,\epsilon))$ as
\[F_\pm 
:=\sum_{k=1}^{\infty} \frac{{\cal D}^k (F)}{k!\prod_{i=1}^k(2i\pm n)}\rho^k,
\]
where $F_-$ is only defined when $n$ is odd or $\cal D^{\frac{n}{2}}(F)=0$.
Moreover, define the following linear differential operators on $ C^\infty(M\times (-\epsilon,\epsilon))$
\[\mathcal D_{\pm}:= 2\rho\del^2_\rho +(2\pm n)\del_\rho -\cal D.\]
Then, for any $F\in C^\infty (M)$ and $f\in  C^\infty(M\times (-\epsilon,\epsilon))$ we have
\beq
\label{d+-}
\mathcal D_\pm(F_\pm) &=& \cal D (F)
\\
\label{d-}
\mathcal D_-(\rho^{\frac{n}{2}} f) & = &\rho^{\frac{n}{2}} \mathcal D_+(f).
\\
\label{d-+}
\mathcal D_-(\rho^{\frac{n}{2}} F_+) \ =\ \rho^{\frac{n}{2}} \mathcal D_+(F_+)&=& \rho^{\frac{n}{2}}  \cal D(F).
\\
\label{solhom}
\cal D_-(\rho^{\frac{n}{2}}(F+F_+))&=&0.
\eeq
In particular, for each $F\in C^\infty(M) $, the function $f=\rho^{\frac{n}{2}}(F+F_+)$ is a solution to the homogeneous equation $\cal D_- (f)=0$.
\end{lemma}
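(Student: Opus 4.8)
The plan is to verify the four displayed identities \eqref{d+-}, \eqref{d-}, \eqref{d-+}, \eqref{solhom} directly, in an order in which each uses the previous ones; the final ``in particular'' assertion is then just \eqref{solhom} read as $\mathcal D_-(f)=0$ for $f=\rho^{\frac n2}(F+F_+)$. Everything is checked coefficient-by-coefficient as power series in $\rho$ with coefficients that are smooth functions on $M$, the only structural input being that $\cal D$ differentiates in the $M$-directions and therefore commutes with multiplication by powers of $\rho$ and with $\del_\rho$. I would start with \eqref{d-}: expanding $\del_\rho(\rho^{\frac n2}f)$ and $\del_\rho^2(\rho^{\frac n2}f)$ by the Leibniz rule and substituting into $\mathcal D_-(\rho^{\frac n2}f)=2\rho\,\del_\rho^2(\rho^{\frac n2}f)+(2-n)\del_\rho(\rho^{\frac n2}f)-\cal D(\rho^{\frac n2}f)$, the terms proportional to $\rho^{\frac n2-1}f$ occur with total coefficient $n\bigl(\tfrac n2-1\bigr)+(2-n)\tfrac n2=0$ and cancel, while the remaining terms assemble into $\rho^{\frac n2}\bigl(2\rho\,\del_\rho^2 f+(n+2)\del_\rho f-\cal D f\bigr)=\rho^{\frac n2}\mathcal D_+(f)$.

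Next I would prove \eqref{d+-}. Writing $F_\pm=\sum_{k\ge1}a_k^\pm\rho^k$ with $a_k^\pm=\frac{\cal D^k(F)}{k!\,\prod_{i=1}^k(2i\pm n)}$, one computes the coefficient of $\rho^k$ in $\mathcal D_\pm(F_\pm)$: applying $2\rho\,\del_\rho^2+(2\pm n)\del_\rho$ to the monomial $a_{k+1}^\pm\rho^{k+1}$ gives $(k+1)\bigl(2(k+1)\pm n\bigr)a_{k+1}^\pm\rho^k$, and because $\prod_{i=1}^{k+1}(2i\pm n)=\bigl(2(k+1)\pm n\bigr)\prod_{i=1}^{k}(2i\pm n)$ the defining formula gives $(k+1)\bigl(2(k+1)\pm n\bigr)a_{k+1}^\pm=\cal D(a_k^\pm)$, which exactly cancels the $-\cal D(a_k^\pm)\rho^k$ produced by $-\cal D$ in $\mathcal D_\pm$. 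Thus all coefficients of $\rho^{\ge1}$ vanish, and the only surviving term is the order-$\rho^0$ term $(2\pm n)a_1^\pm=\cal D(F)$, coming from $(2\pm n)\del_\rho$ acting on $a_1^\pm\rho$; hence $\mathcal D_\pm(F_\pm)=\cal D(F)$. The only case needing separate attention is $n$ even with the lower sign: then $\cal D^{\frac n2}(F)=0$ by hypothesis, $F_-$ is the finite sum up to $k=\tfrac n2-1$, and the top-order coefficient of $\mathcal D_-(F_-)$ still vanishes because it equals $\cal D(a_{\frac n2-1}^-)$, which is proportional to $\cal D^{\frac n2}(F)=0$.

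Finally, \eqref{d-+} is immediate by applying \eqref{d-} with $f=F_+$ and then \eqref{d+-} with the upper sign, and \eqref{solhom} follows by splitting $\mathcal D_-\bigl(\rho^{\frac n2}(F+F_+)\bigr)=\mathcal D_-(\rho^{\frac n2}F)+\mathcal D_-(\rho^{\frac n2}F_+)$, using \eqref{d-} on the $\rho$-independent function $F$ to get $\mathcal D_-(\rho^{\frac n2}F)=\rho^{\frac n2}\mathcal D_+(F)=-\rho^{\frac n2}\cal D(F)$, using \eqref{d-+} to get $\mathcal D_-(\rho^{\frac n2}F_+)=\rho^{\frac n2}\cal D(F)$, and noting the two cancel. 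I do not anticipate any real obstacle: the argument is elementary power-series bookkeeping, and the only points where one must be slightly attentive are the index shift $\prod_{i=1}^{k+1}=\bigl(2(k+1)\pm n\bigr)\prod_{i=1}^{k}$ in \eqref{d+-} and the degenerate even-dimensional endpoint for $F_-$; convergence of the series defining $F_\pm$ is automatic when $F$ is analytic or $\cal D^k(F)$ eventually vanishes and in any case plays no role in the identities or in the paper's applications.
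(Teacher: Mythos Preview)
Your proof is correct and follows the same approach as the paper's: verify \eqref{d+-} and \eqref{d-} by direct computation, then deduce \eqref{d-+} and \eqref{solhom} from them. The paper's proof is a single sentence declaring \eqref{d+-} and \eqref{d-} to be ``a straightforward computation'' and noting that together they imply the rest; you have simply supplied the details the paper omits.
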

\bprf
To verify equations \eqref{d+-} and \eqref{d-} is a straightforward computation. Both together imply \eqref{d-+} which yields \eqref{solhom}.
\eprf

\subsection{Extensions of nilpotent Lie algebras} Let $\lak$ be a two-step nilpotent Lie algebra of dimension $q$ and let $\laz$ be its centre of dimension $p<q$.
We fix a complement $\lam$ of $\z$,
\[\lak=\z\+\lam\] Then $[\lam,\lam]\subset \laz$ and we can fix a basis $(\e_a)_{a=1, \ldots ,p}$ of $\laz$ and $(\e_A)_{A=p+1, \ldots ,q}$ of $\lam$ such that 
\[
\left[\e_a,\e_b\right] =0,\qquad
\left[\e_a,\e_B\right] =0,\qquad
\left[\e_A,\e_B\right]=r^c_{AB}\e_c,
\]
where $r_{AB}^c$ denote the structure constants of $\lak$. Note that there are no further conditions on these numbers other than $r_{AB}^c=-r_{BA}^c$. 
Denote by $\mathfrak{der}(\lak)$ the derivations of $\lak$  which comes with a canonical Lie algebra structure induced from $\gla (\lak)$. Note that  derivations leave the centre invariant.

Furthermore, let $H$ be a Lie group with Lie algebra $\lah$ and of  dimension $p=\dim (\laz)$ and $\phi:\lah\to \mathfrak{der}(\lak)$ a Lie algebra homomorphism from $\lah$ to the derivations of $\lak$. By fixing a basis $(\e_{\aa})_{\aa=q+1, \ldots , p+q}$ of $\lah$, we can write $\phi$ as
\[
\phi(\e_\aa)\e_b=r_{b\aa}^d\e_d,\qquad
\phi(\e_\aa)\e_B=r_{B\aa}^d\e_d+ r_{B\aa}^E\e_E,
\]
with some constants $r_{b\aa}^d$,  $r_{B\aa}^d$ and $ r_{B\aa}^E$. Finally, with respect to this basis denote the structure constants of $\lah$ by $r_{\aa\bb}^\cc$, i.e., 
\[\left[\e_\aa,\e_\bb\right]\ =\ r_{\aa\bb}^\cc\e_\cc.\]
Now we define the Lie algebra $\lag$ to be semi-direct sum 
$\lag=\lah \ltimes_{\phi}\lak$
of $\lah$ and $\lak$ with respect to $\phi$ of dimension $n=p+q$. Clearly, the structure constants of $\lag$ are given by the numbers
\[r_{AB}^c, r_{b\aa}^d, r_{B\aa}^d, r_{B\aa}^E, r_{\aa\bb}^\cc,\]
which are subject to the conditions $r_{ij}^k=-r_{ji}^k$ and 
\[
r_{AB}^er_{e\cc}^d=
-2r_{\cc[A}^Cr_{B]C}^d,
\]
i.e., that $\phi(\e_\cc)$ is a derivation, as well as
\[
r_{\aa\bb}^\cc r_{d\cc}^e=2r_{d[\aa}^c r_{\bb]c}^e,\qquad
r_{\aa\bb}^\cc r_{A\cc}^d=2r_{A [\aa}^c r_{\bb]c}^d + 2r_{A [\aa}^B r_{\bb]B}^d ,\qquad
r_{\aa\bb}^\cc r_{A\cc}^B=2r_{A [\aa}^C r_{\bb]C}^B,
\]
which ensure that $\phi:\lak\to\mathfrak{der}(\lah)$ is a Lie algebra homomorphism.
The frame $\e_1, \ldots , \e_n$ on the Lie group $G$ corresponding to $\lag$ satisfies the bracket relations of Proposition \ref{ricpropwalker} with the parallel distribution $\mathcal K$ given by $\lak$. Now we define a left invariant metric by formula~\eqref{gnilp}
\[
\gg=g_{a \cc} (\Theta^a\otimes\Theta^\cc+\Theta^\cc\otimes \Theta^a)+\go_{AB}\Theta^A\circ \Theta^B
\]
where the $\Theta^i$'s are again the algebraic duals of the $\e_i$'s and the $g_{ij}$ are constants. Now the distribution $\cal K^\perp$ is given by $\laz$. 
Then Proposition \ref{ricpropwalker} implies that $(G,\gg)$ is a null Ricci Walker manifold of dimension $n$, which, in general is not Ricci-flat. Its possibly non vanishing components are given by  constants $R_{\aa\cc}$.

In order to determine the ambient metric for the conformal class given by $\gg$ on $G$, we have to solve equations \eqref{fg1neutral2} in this setting, i.e., find a functions $h\in C^\infty((-\varepsilon,\varepsilon)\times G)$, such that
\be
\label{fg1neutral2const}
2\rho \ddot h+(2-n)\dot h
-\Delta(h)
+C=0,\ \text{ with initial condition }h|_{\rho=0}\equiv 0,
\ee
with $\Delta(h)=g^{AB}\nabla_A\nabla_Bh$, and for constants $C$ that are given by the components of the Ricci tensor $R_{\aa\cc}$.
Equation \eqref{fg1neutral2const}, when taken along $\rho=0$ implies
\[
\dot h|_{\rho=0}\equiv\frac{C}{n-2}.
\]
Clearly, the problem \eqref{fg1neutral2const} has a linear solution
\[
h=\frac{C}{n-2}\rho,
\]
but Lemma \ref{sol-lemma} shows that there are more solutions. 
From Corollary \ref{walkercorol} we obtain Theorem~\ref{biinv-amb-intro} from the introduction. More precisely, we get
\begin{theorem}\label{biinv-amb}
Let $\lak$ be a two-step nilpotent Lie algebra of dimension $q$ with centre $\laz$ of dimension $p\le q$, and let $H$ be a Lie group of   dimension $p$ and with Lie algebra $\lah$. Let $\phi:\lah\to\mathfrak{der}(\lak)$ a Lie algebra homomorphism into the derivations of $\lak$ and $G$ be the $n=q+p$-dimensional Lie group corresponding to the Lie algebra $\lag$ that is given as the  semi-direct sum
\[\lag=\lah \ltimes_{\phi}\lak,\]
of $\lah$ and $\lak$ by $\phi$.
Fix  a basis $(\e_\aa)_{\aa=1, \ldots ,p}$  of $\lah$, a basis $(\e_a)_{a=1, \ldots ,p}$ of $\laz$ and complement it with  $(\e_A)_{A=1, \ldots ,q-p}$ to a basis of  $\lak$. Let $(\Theta^i)_{i=1, \ldots , n}$ is the dual basis to $(\e_i)_{i=1, \ldots , n}$ and 
\[
\gg=2 \,  g _{a \cc}\, \Theta^a\circ\Theta^\cc+ g _{AB}\Theta^A\circ \Theta^B
\]
be the left-invariant pseudo-Riemannian metric $\gg$ on $G$ defined by real numbers
$
 g _{a\cc}$ and $
 g _{AB}$.
Then the conformal class of $\gg$ on $G$
admits  
{Ricci-flat}
 ambient metrics  given by
\[\widetilde{\gg}=2\der(\rho t)\der t+t^2\Big(\gg+ \Big(
\frac{2 \rho}{n-2} R _{\aa\cc}
+\rho^{\frac{n}{2}}\Big( F_{\aa\cc} + \sum_{k=1}^{\infty} \frac{{\Lo}^k (F_{\aa\cc})}{k!\prod_{i=1}^k(2i+ n)}\rho^k\Big)\Big)
\Theta^\aa\Theta^\cc\Big),\]
where  $ R _{\aa\cc}=Ric^\gg(\e_\aa,\e_\cc)$ are the  components of the Ricci tensor of $\gg$ and $F_{\aa\cc}=F_{\cc\aa}$ are  functions on $G$ with $d F_{\aa\cc}(\e_a)=0$. In particular, when $n$ is odd,
 $F_{\aa\cc}\equiv 0$ gives the unique analytic Ricci-flat Fefferman-Graham ambient metric.
\end{theorem}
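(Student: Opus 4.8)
The plan is to verify the hypotheses of Corollary \ref{walkercorol} (equivalently Proposition \ref{liehprop}) for the left-invariant metric $\gg$ on $G$, and then solve the resulting linear decoupled PDE \eqref{fg1neutral2} explicitly using Lemma \ref{sol-lemma}. First I would record that the frame $\e_1,\dots,\e_n$ on $G$ coming from the basis of $\lag=\lah\ltimes_\phi\lak$ satisfies exactly the bracket relations \eqref{w1}, \eqref{w5} of Proposition \ref{ricpropwalker}: the relations $r^k_{ab}=r^k_{aB}=r^\cc_{AB}=r^\bb_{a\cc}=r^B_{a\cc}=r^\aa_{B\cc}=r^{\bar b}_{\aa\cc}=r^B_{\aa\cc}=0$ and $r^C_{AB}=0$ all hold because $\lak$ is two-step nilpotent with $[\lak,\lak]\subset\z$, because $\z$ is the centre (so $[\z,\lak]=0$), and because $\phi(\lah)$ consists of derivations that preserve $\z$ (so the action of $\e_\aa$ sends $\z$ to $\z$ and $\lak$ to $\lak$). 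Since all structure constants $r^k_{ij}$ are \emph{constant}, the derivative conditions \eqref{dr1}, \eqref{dr2} hold trivially. Hence by Proposition \ref{ricpropwalker} the metric $\gg$ is null Ricci Walker with parallel null distribution $\cal N=\z$, $\Im(\P)\subset\cal N$, and its only possibly nonvanishing Ricci components are the constants $R_{\aa\cc}$.

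Next I would check the curvature condition $X\hook\Ro=0$ for $X\in\cal N$, which is the extra hypothesis of Corollary \ref{walkercorol}. From the Christoffel formula \eqref{christoffel} together with the vanishing of the structure constants listed above, one sees that $\nabla_{\e_a}\e_j=0$ for all $j$ (the relevant $\Gamma$'s vanish), so $\cal N=\z$ is not merely parallel but spanned by parallel vector fields; more to the point, Lemma \ref{walkerlemma2} gives $R_{ijab}=R_{ijaB}=0$, and the only remaining components involving an index in $\{1,\dots,p\}$ would be $R_{b\aa\cc d}=g_{f(\aa}\,\dr r^f_{\cc)d}(\e_b)$ by Proposition \ref{ricpropwalker}, which vanishes because the $r^f_{\cc d}$ are constant. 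Thus $\e_a\hook\Ro=0$ for every $a\le p$, so the hypotheses of Corollary \ref{walkercorol} are met. Applying it, an ambient metric is obtained from $\h=h_{\aa\cc}\,\Theta^\aa\Theta^\cc$ with $\dr F_{\aa\cc}(\e_a)=\e_a(h_{\aa\cc})=0$ provided each component solves
\[
\Delta_-(h_{\aa\cc})+2R_{\aa\cc}=O(\rho^m),\qquad
\Delta_-=2\rho\,\partial_\rho^2+(2-n)\partial_\rho-\Lo,
\]
where $\Lo=g^{AB}\no_A\no_B$ reduces, on functions annihilated by the $\e_a$, to the left-invariant Laplacian of $g_{AB}\Theta^A\Theta^B$ on $G$; and since $n=p+q$ with $q\geq p$ we have $n-2\neq 0$, so $\tfrac{2\rho}{n-2}R_{\aa\cc}$ is a genuine particular solution.

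Finally I would assemble the general solution. The particular solution $h_{\aa\cc}=\tfrac{2\rho}{n-2}R_{\aa\cc}$ handles the inhomogeneity (using that $R_{\aa\cc}$ is constant, so $\Lo$ kills it); to this one may add any solution of the homogeneous equation $\Delta_-(f)=0$ with $\e_a(f)=0$. Lemma \ref{sol-lemma}, applied with $\cal D=\Lo$ and $\cal D_-=\Delta_-$, produces for each function $F_{\aa\cc}$ on $G$ with $\dr F_{\aa\cc}(\e_a)=0$ the homogeneous solution $\rho^{n/2}\big(F_{\aa\cc}+\sum_{k\geq1}\tfrac{\Lo^k(F_{\aa\cc})}{k!\prod_{i=1}^k(2i+n)}\rho^k\big)$; adding this to the particular solution gives precisely the $h_{\aa\cc}$ in the statement, and the resulting $\h$ is automatically divergence free since its components are independent of the $x^a$. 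When $n$ is odd the $\rho^{n/2}$ term is non-analytic in $\rho$, so setting $F_{\aa\cc}\equiv0$ gives the unique ambient metric analytic in $\rho$, which by the Fefferman--Graham uniqueness (and because $n$ odd forces $m=\infty$) is Ricci-flat; when $n$ is even, one must additionally note that the obstruction tensor, which by Proposition \ref{liehprop} equals $c_n\,\Bo^m R_{ij}$, vanishes here because $R_{\aa\cc}$ is constant and parallel, so $\Bo R=0$ and the metric is Ricci-flat for \emph{every} choice of the $F_{\aa\cc}$. The main obstacle is the bookkeeping in the first two paragraphs: carefully confirming that every structure-constant and every derivative-of-structure-constant condition in Propositions \ref{ricpropwalker} and \ref{liehprop} is forced by "$\lak$ two-step nilpotent with centre $\z$" and "$\phi$ a homomorphism into derivations," and that the curvature condition $\cal N\hook\Ro=0$ genuinely holds rather than merely $\cal N$ being parallel.
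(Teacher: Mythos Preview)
Your proposal is correct and follows essentially the same route as the paper: verify that the left-invariant frame satisfies the bracket and derivative hypotheses of Proposition~\ref{ricpropwalker} (using that the $r_{ij}^k$ are constant), deduce the null Ricci Walker property with constant $R_{\aa\cc}$, invoke Corollary~\ref{walkercorol} to reduce to the scalar equation $\Delta_-(h_{\aa\cc})+2R_{\aa\cc}=0$, and solve by superposing the linear particular solution $\tfrac{2\rho}{n-2}R_{\aa\cc}$ with the homogeneous solutions from Lemma~\ref{sol-lemma}. You are in fact more explicit than the paper on two points it leaves implicit---the verification of the curvature hypothesis $\cal N\hook\Ro=0$ needed for Corollary~\ref{walkercorol}, and the reason the ambient metric is Ricci-flat even when $n$ is even (namely $\Bo R_{\aa\cc}=0$ since the $R_{\aa\cc}$ are constant, so the obstruction vanishes by Proposition~\ref{liehprop}).
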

Note that in general the metrics $\gg$ as in the theorem are neither Ricci-flat nor do they admit parallel null vector fields  (see also Remark \ref{ricrem}).

\subsection{Generalised pp-waves}
Another class of examples to which our Corollary~\ref{walkercorol} applies  are the Lorentzian pp-waves for which we have determined the analytic ambient metric in \cite{leistner-nurowski08}. The acronym ``pp'' stands for {\em plane fronted with parallel rays}. A Lorentzian pp-wave metric in dimension $n$ is locally given by
\[
\gg
=
2\der u\der v+ H\der u^2 +\sum_{i=1}^{n-2}(\der x^i)^2,
\]
where $H=H(x^i,u)$ is a function that does not depend on $v$.

Here we  generalise this class and the results in \cite{AndersonLeistnerNurowski15,leistner-nurowski08} to higher signature and, more importantly, determine all solutions to the Fefferman-Graham equations including the non-analytic ones, and determine the obstruction tensor in the case of Lorentzian pp-waves.

We will use the same index conventions as in the previous sections ($a=1, \ldots p$, $B=p+1, \ldots n-p$, $\cc=n-p+1,\ldots ,  n$), and define a modified Kronecker delta as
\[ \delta_{\aa b}=\left\{\begin{array}{ll}1,&\text{ if $\aa=b+n-p$}\\ 0&\text{ otherwise.}\end{array}\right.\]
\begin{definition}
Let $\cal U\subset \mathbb{R}^n\ni(x^1, \ldots , x^n)$ be an open set, and $H_{\aa\cc}$ and $G_{AB}$ smooth functions on $\cal U$ satisfying $\det(G_{AB})\not=0$ and 
$\del_a(H_{\aa\cc})=0$ and $\del_a(G_{AB})=\del_\cc(G_{AB})=0$. Then the pseudo-Riemannian metric
\be\label{pp}
\gg
\
=
\
2\delta_{\aa b}  \der x^\aa \der x^b +  H_{\aa \bb}\der x^\aa \der x^\bb+ G_{AB}\der x^A\der x^B 
,\ee
is called a {\em generalised  pp-wave}, or for short, a {\em gpp-wave}. 

If all the $G_{AB}$'s are  constants, we call $\gg$ {\em plane fronted wave with parallel rays}, or for short, {\em pp-wave}.
\end{definition}
To obtain Lorentzian gpp-waves, one sets $p=1$ and $G_{AB}$  positive definite.
For all $p$,  gpp-waves admit $p$ parallel vector fields $\del_a$ and hence are Walker metrics, however in general not null Ricci Walker metrics. As in Proposition \ref{walkerprop}, for gpp-waves 
we have the frame and dual co-frame
\begin{eqnarray*}
\e_a:=\partial_a,
&
\e_B:=
E_{B}^{~A} \del_A,
&
\e_\cc:=
\partial_\cc- H_{\aa\cc}\delta^{\aa\bb}\del_b,
\\
 \Theta^a=\der x^a +H_{\aa\cc}\der x^
 \cc,
 &
 \Theta^B=F^{B}_{~A}\der x^A,
 &
 \Theta^\cc=\der x^\cc,
\end{eqnarray*}
where $E_A^{~B}$ is a matrix such that $E_A^{~B}G_{BC}E_D^{~C}=\delta_{AD}$
and  $F^B_{~A}$ is the inverse of $E_A^{~B}$.
Note that, since $G_{AB}$ does not depend on the $x^a$'s or the $x^\cc$'s neither does $E_A^{~B}$.
The gpp-wave metric  in this frame is 
\[\gg= \delta_{a \bb}\Theta^a\Theta^{\bb} +g_{AB}\Theta^A\Theta^B.
\] 
with $g_{AB}=\epsilon_A\delta_{AB}$. 
 The only non vanishing brackets for this frame are
 \beqn
 \left[ \e_A,\e_B \right]&=&
 -E_{[A}^{~C}E_{B]}^{~D}dF_{D}^{~E}(\del_C) \e_E
 ,
 \\ 
 \left[ \e_A,\e_\bb \right]&=&-d H_{\bb\cc}(\e_A)\delta^{\cc\dd}\e_d
 \\
 \left[ \e_\aa,\e_\bb \right]&=&2\der H_{\cc[\aa}(\del_{\bb]})\delta^{\cc\dd}\e_d,
 \eeqn
Hence, the assumptions of Proposition \ref{ricpropwalker} are satisfied whenever the $G_{AB}$'s are constant, i.e., whenever $\gg$ is a pp-wave.
 
The 
Levi-Civita connection $\nabla$ of a gpp-wave $\gg$ is given by
\begin{eqnarray*}
\nabla_A\e_B&=&\nabla^\mathbf{G}_A\e_B,\ \\
\nabla_\aa \e_B&=& d H_{\aa\cc}(\e_B) \delta^{\cc\bb}\e_b,
\\
\nabla_{\aa}\e_\bb &=&
-2\der H_{\aa[\bb}(\del_{\cc]})\delta^{\cc\dd} \e_d 
-\mathrm{grad}^\mathbf{G}(H_{\aa\bb}),
\end{eqnarray*}
in which $\nabla^\mathbf{G}$ is the Levi-Civita connection of the metric $\mathbf{G}=G_{AB}\der x^A\der x^B$ and $\mathrm{grad}^\mathbf{G}$ the corresponding gradient.
This allows us to compute the curvature, which satisfies $R_{aijk}=0$,  and the Ricci-curvature, whose only possibly non-vanishing terms are given as
\begin{eqnarray*}
R_{AB}&=&R^\mathbf{G}_{AB}
\\
R_{\aa\cc}&=&-\tfrac{1}{2}g^{BD}\gg(\nabla_B(\mathrm{grad}(H_{\aa\cc}),\e_D)\ =\ 
-\tfrac{1}{2}g^{BD}\nabla^\mathbf{G}_B\nabla^\mathbf{G}_D(H_{\aa\cc})=-\tfrac{1}{2}\Delta_\mathbf{G}(H_{\aa\cc}).
\end{eqnarray*}
\begin{lemma}
The defined  gpp-waves satisfy $R_{aijk}=0$ and they are null Ricci Walker metrics if the metric $\mathbf{G}$ is Ricci-flat. 
In particular, pp-waves are null Ricci Walker metrics.
\end{lemma}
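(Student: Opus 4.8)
The plan is to observe that the lemma simply collects what the preceding connection and curvature computations already deliver, and to make this precise in three steps: (i) exhibit the parallel totally null distribution that makes a gpp-wave a Walker metric, (ii) deduce $R_{aijk}=0$ from it, and (iii) read off from the explicit Ricci components precisely when property (A), $\Im(\P)\subset\cal N$, holds. Throughout I keep the index conventions $a,b,c\in\{1,\dots,p\}$, $A,B,C\in\{p+1,\dots,n-p\}$, $\aa,\bb,\cc\in\{n-p+1,\dots,n\}$ from the previous sections, and I set $\cal N:=\Span(\e_1,\dots,\e_p)=\Span(\del_1,\dots,\del_p)$.

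First I would record that a gpp-wave is a Walker manifold with parallel null distribution $\cal N$. As already noted above, the $\del_a$ are parallel: in the coordinate frame $\del_a^\flat=\delta_{a\bb}\,\der x^{\bb}$ has constant coefficients, hence is closed, while $\del_a$ is a Killing field because $H_{\aa\cc}$ and $G_{AB}$ do not depend on $x^a$, and a Killing field with closed metric dual is parallel. Moreover $\cal N$ is totally null of rank $p>0$ since $\gg(\e_a,\e_b)=0$ by the form of $\gg$. From $\nabla\e_a=0$ we get $R(X,Y)\e_a=0$ for all vector fields $X,Y$, and the pair symmetry of the curvature tensor then yields $R_{aijk}=R_{jkai}=\gg(R(\e_j,\e_k)\e_a,\e_i)=0$, the first assertion (consistently with Lemma~\ref{walkerlemma2}).

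For the second assertion I would use the Ricci components computed just above the lemma, whose only possibly nonzero entries are $R_{AB}=R^{\mathbf G}_{AB}$ and $R_{\aa\cc}=-\tfrac12\Delta_{\mathbf G}(H_{\aa\cc})$, together with the shape of the inverse metric $g^{ij}$ in this frame, which pairs an index $a$ with $\bb=a+n-p$ and an index $A$ with $B$ and vanishes otherwise. A direct contraction gives $\Ric^\sharp\e_a=0$, $\Ric^\sharp\e_\aa=g^{b\cc}R_{\aa\cc}\,\e_b\in\cal N$, and $\Ric^\sharp\e_A=g^{BC}R^{\mathbf G}_{AB}\,\e_C$; the last vector lies in $\Span(\e_{p+1},\dots,\e_{n-p})$, which meets $\cal N$ only in $0$, so $\Im(\Ric^\sharp)\subset\cal N$ holds iff $R^{\mathbf G}_{AB}=0$ for all $A,B$, i.e.\ precisely when $\mathbf G$ is Ricci-flat. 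If $\mathbf G$ is Ricci-flat, then moreover $Scal=g^{ij}R_{ij}=0$ (the surviving components $R_{\aa\cc}$ are killed by $g^{\aa\cc}=0$), hence $\P=\tfrac1{n-2}\Ric$ and $\Im(\P)=\Im(\Ric^\sharp)\subset\cal N$; combined with the parallelism of $\cal N$ from Step~1, this shows the gpp-wave satisfies properties (A) and (B), i.e.\ is a null Ricci Walker metric.

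Finally, for a pp-wave all the $G_{AB}$ are constant, so $\mathbf G=G_{AB}\der x^A\der x^B$ has vanishing Christoffel symbols, hence is flat and in particular Ricci-flat, and the previous step applies. I do not expect a genuine obstacle: the only point that needs care is the index bookkeeping in the penultimate step — keeping the off-diagonal form of $g^{ij}$ straight so as to see that the sole obstruction to $\Im(\Ric^\sharp)\subset\cal N$ is the transverse Ricci tensor $R^{\mathbf G}_{AB}$, and checking (using Step~1 and the Walker structure) that the list of possibly nonzero Ricci components quoted above is complete.
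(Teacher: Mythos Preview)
Your proposal is correct and follows essentially the same route as the paper: the lemma in the paper is stated without a separate proof because it merely collects the explicit connection and Ricci computations given immediately before it, and that is exactly what you do. The only (minor) variation is that you justify parallelism of the $\del_a$ via the Killing-plus-closed-dual observation, whereas the paper relies on the explicit Levi-Civita formulas displayed just above; both are equally valid and lead to $R_{aijk}=0$ by the same curvature-symmetry argument.
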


\begin{remark}
If we drop the assumption on a pp-wave that the $\e_a$'s are parallel, i.e., that $\partial_a H\not=0$, then the Ricci tensor is no longer two-step nilpotent. For example in the Lorentzian case, i.e., when $p=1$ and $\epsilon_i=1$,  if $\partial_1 H\not=0$ we get that
\[
Ric\left(\partial_1, \partial_n\right)=\partial_1^2(H),\qquad
Ric\left(\partial_A, \partial_n\right)=\partial_A\partial_1(H),
\]
which shows that $Ric$ cannot be two-step nilpotent (see also \cite{leistner05c}).
\end{remark}
 \begin{remark}
Using the necessary conditions that were derived in \cite{gover-nurowski04} for conformal Einstein metrics, a straightforward computation of the Weyl, Cotton and Bach tensors as in \cite{leistner-nurowski08} shows
 that in general gpp-waves are not conformally Einstein. In fact,  in \cite{leistner-nurowski08} we gave explicit examples of Bach flat pp-waves that are not conformally Einstein.
 \end{remark}

When determining the ambient metric for a gpp-wave for which  the metric $\mathbf{G}$ is Ricci-flat,  we can apply Theorem \ref{fgwalkerthm} and Proposition \ref{liehprop}. Moreover, since all the $\e_a=\del_a$ are parallel, the curvature terms $R_{aijk}$ vanish, but also the $\Theta^\aa$'s are parallel.   We obtain
\begin{corollary}
Let $\mathbf{G}=G_{AB}\der x^A\der x^B$ be a Ricci-flat metric on $\mathbb{R}^{n-2p}$ and $H_{\aa \bb}$ functions of $(n-p)$ variables  $(x^A,x^\bb)$  that define the  gpp-wave
\[\gg
\
=
\
2\delta_{\aa b}  \der x^\aa \der x^b +  H_{\aa \bb}\der x^\aa \der x^\bb+ G_{AB}\der x^A\der x^B \]
on $\mathbb{R}^n$. 
Then an ambient metric for $[\gg]$ is given by
$
\widetilde{\gg}=2\der t\der(\rho t)+t^2(\gg+\h(\rho))
$, where 
$\h= h_{\bb\dd}\der x^\bb \der x^\dd$ and 
whose components satisfy
$\del_a(h_{\bb\dd})=0$ and 
\begin{equation}
\label{fggppansatz}
\begin{array}{rcl} 
2\rho \ddot h_{\bb\dd}+(2-n)\dot hh_{\bb\dd}
-\Delta_\mathbf{G}(h_{\bb\dd})-\Delta_\mathbf{G}(H_{\bb\dd})
&=&O(\rho^m),
\end{array}
\end{equation}
with \oe\ and where
$\Delta_\mathbf{G}$ is the Laplacian of $\mathbf{G}$.
%
%
%
%
%
\end{corollary}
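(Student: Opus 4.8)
The plan is to obtain this corollary as a direct specialization of Corollary~\ref{walkercorol} to the gpp-wave setting, after verifying that the hypotheses of that corollary are met. First I would invoke the lemma immediately preceding the statement: if $\mathbf{G}$ is Ricci-flat then the gpp-wave $\gg$ is a null Ricci Walker metric with parallel totally null distribution $\cal N=\mathrm{span}(\del_1,\ldots,\del_p)$ and $\Im(\P)\subset\cal N$. Next I would check the curvature condition $X\hook\Ro=0$ for all $X\in\cal N$, i.e.\ $\Ro_{aijk}=0$: this was already recorded in the computation of the curvature of a gpp-wave just above, since all the $\e_a=\del_a$ are parallel, so $R(\e_a,\cdot)\cdot=0$. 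Thus the hypotheses of Corollary~\ref{walkercorol} hold.

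With that in place, Corollary~\ref{walkercorol} tells us that an ambient metric $\widetilde{\gg}=2\der t\der(\rho t)+t^2(\gg+\h)$ is given by $\h$ whose components $h_{\bb\dd}$ (in the frame $\e_a,\e_A,\e_\cc$ of Proposition~\ref{walkerprop}) satisfy $\del_a(h_{\bb\dd})=0$ together with $\Delta_-(h_{\bb\dd})+2\Ro_{\bb\dd}=O(\rho^m)$, where $\Delta_-(f)=2\rho\ddot f+(2-n)\dot f-\Lo(f)$ and $\Lo(f)=\go^{AC}\e_A(\e_C(f))$. The remaining work is purely a matter of identifying the two pieces of data in the gpp-wave model. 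For the inhomogeneity, the Ricci computation above gives $R_{\aa\cc}=-\tfrac12\Delta_\mathbf{G}(H_{\aa\cc})$, so $2\Ro_{\bb\dd}=-\Delta_\mathbf{G}(H_{\bb\dd})$. For the Laplacian term, I would observe that since $H_{\aa\cc}$ (and hence $h_{\bb\dd}$) depends only on $(x^A,x^\bb)$ and the $\e_A=E_A^{~B}\del_B$ span the directions of the metric $\mathbf{G}=G_{AB}\der x^A\der x^B$, the operator $\Lo$ acting on such functions is exactly the Laplace--Beltrami operator $\Delta_\mathbf{G}$ of $\mathbf{G}$ in the $n-2p$ variables $x^A$; this is the same reduction of $\Bo$ to $\Lo$ noted right after Proposition~\ref{liehprop}. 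Substituting $\Lo(h_{\bb\dd})=\Delta_\mathbf{G}(h_{\bb\dd})$ and $2\Ro_{\bb\dd}=-\Delta_\mathbf{G}(H_{\bb\dd})$ into $\Delta_-(h_{\bb\dd})+2\Ro_{\bb\dd}=O(\rho^m)$ yields exactly equation~\eqref{fggppansatz}, namely $2\rho\ddot h_{\bb\dd}+(2-n)\dot h_{\bb\dd}-\Delta_\mathbf{G}(h_{\bb\dd})-\Delta_\mathbf{G}(H_{\bb\dd})=O(\rho^m)$, with $m$ as in the convention \oe.

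The main obstacle, such as it is, is not conceptual but bookkeeping: one must be careful that the frame used in Corollary~\ref{walkercorol} (coming from Proposition~\ref{walkerprop}) coincides, up to the relevant identifications, with the explicit gpp-wave frame $\e_a=\del_a$, $\e_B=E_B^{~A}\del_A$, $\e_\cc=\del_\cc-H_{\aa\cc}\delta^{\aa\bb}\del_b$ listed above, and in particular that $\Im(\h)\subset\cal N$ is equivalent to $\h=h_{\bb\dd}\der x^\bb\der x^\dd$ with $h_{\bb\dd}=h_{\dd\bb}$ in these coordinates, and that $\del_a(h_{\bb\dd})=0$ is the coordinate form of $\cal L_{\e_a}\h=0$. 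I would also note in passing that, by Proposition~\ref{liehprop}, when $n$ is even the obstruction tensor is $\cal O=c_n\,\Bo^{m}R=c_n\,\Delta_\mathbf{G}^{m}R_{\bb\dd}\,\der x^\bb\der x^\dd$ up to the same reduction, so that it vanishes precisely when $\Delta_\mathbf{G}^{n/2}(H_{\bb\dd})=0$, though this is not strictly needed for the statement as phrased. Closing these routine identifications completes the proof.
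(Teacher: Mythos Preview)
Your proposal is correct and follows essentially the same route as the paper: the paper notes that since the $\e_a=\del_a$ are parallel the curvature terms $R_{aijk}$ vanish, so one may invoke the null Ricci Walker machinery (Theorem~\ref{fgwalkerthm} and Proposition~\ref{liehprop}, packaged as Corollary~\ref{walkercorol}) and then substitute $R_{\bb\dd}=-\tfrac12\Delta_\mathbf{G}(H_{\bb\dd})$ and $\Lo=\Delta_\mathbf{G}$. Your bookkeeping remarks about the frame identifications and $\Theta^\cc=\der x^\cc$ are exactly the routine checks needed.
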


This corollary shows  that in order to obtain Ricci-flat ambient metrics,  for a function $H=H(x^{p+1}, \ldots, x^n)$ we have to solve the equation
\begin{equation}
\label{fggpp1}
\begin{array}{rcl} 
2\rho \ddot h+(2-n)\dot h
-\Delta_\mathbf{G}(h)-\Delta_\mathbf{G}(H)
&=&0,
\end{array}\end{equation}
for a function $h=h(\rho, x^{p+1},\ldots,x^n)$.
This can be solved by standard power series expansion, noticing that its indicial exponents are $s=0$ and $s=n/2$. We extend our results in \cite{AndersonLeistnerNurowski15, leistner-nurowski08},  by the following more general existence statement for gpp-waves.

\begin{theorem}\label{pftheo}
Let $\mathbf{G}$ be a semi-Riemannian metric on $\mathbb{R}^{n-2p}$. Then the following functions  $h=h(\rho, x^{p+1}, \ldots, x^{n})$ are solutions to  equation \eqref{fggpp1} with $h(\rho)\to 0$  when $\rho\downarrow 0$:

 When $n$ is odd:
\be
h=\sum_{k=1}^{\infty} \frac{\Delta_\mathbf{G}^kH}{k!\prod_{i=1}^k(2i-n)}\rho^k
+
\rho^{n/2}\Big(\alpha+\sum_{k=1}^\infty \frac{\Delta_\mathbf{G}^k\alpha}{k!\prod_{i=1}^k(2i+n)}\rho^k\Big),\label{pro}\ee
where $\alpha=\alpha(x^{p+1}, \ldots, x^{n})$ is an arbitrary function of its variables.
In particular, if $\alpha\equiv 0$ this gives an  analytic in $\rho$ solution in a neighbourhood of $\rho=0$ with $h(0)=0$. 

When $n=2s$ is even:
\begin{eqnarray}
\label{pre}
h&=&
\sum_{k=1}^{s-1}
 \frac{\Delta_\mathbf{G}^kH}{k!\prod_{i=1}^k(2i-n)}\rho^k
 +\rho^{s}\Big(\alpha+\sum_{k=1}^\infty \frac{\Delta_\mathbf{G}^k\alpha}{k!\prod_{i=1}^k(2i+n)}\rho^k\Big)
  \\
\nonumber &&+\
c_n\rho^{s}\left(
 \sum_{k=0}^\infty
\left(\log(\rho)
- q_k\right)\frac{\Delta_\mathbf{G}^{s+k}H}{k!\prod_{i=1}^k(2i+n)}\rho^k\right),
\end{eqnarray}
where $\alpha=\alpha(x^{p+1}, \ldots, x^{n})$ and $q_0=q_0(x^{n-p+1}, \ldots , x^n)$ and
\[
q_k(x^{n-p+1}, \ldots , x^n) := q_0(x^{n-p+1}, \ldots , x^n)+\sum_{i=1}^k\frac{n+4i}{i(n+2i)},\]
 for  $k=1, 2, \ldots $,
are arbitrary functions of their variables and the constant $c_n$ is given as follows
\[ c_n:=-
\frac{1}{
(s-1)! \prod_{i=0}^{s-1}(2i-n)}.\]
In particular,  when $\Delta_\mathbf{G}^{s}H\equiv 0$ there are solutions  that are analytic in $\rho$ in a neighbourhood of $\rho=0$ and with $h(0)=0$. These solutions are parametrized by the functions $\alpha$.

\end{theorem}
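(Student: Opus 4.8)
The plan is to solve the linear ODE-in-$\rho$ with operator-valued coefficients $2\rho\ddot h+(2-n)\dot h-\Delta_\mathbf{G}(h)=\Delta_\mathbf{G}(H)$ by a Frobenius-type power series ansatz in $\rho$, treating $\Delta_\mathbf{G}$ as a formal parameter; the indicial equation $2s(s-1)+(2-n)s=s(2s-n)=0$ has roots $s=0$ and $s=n/2$, so the two series in the claimed formulas correspond exactly to these two exponents. First I would handle the $s=0$ branch: plugging $h=\sum_{k\ge1}a_k\rho^k$ with $a_k$ functions on $\mathbb{R}^{n-2p}$ into the left-hand side and matching the coefficient of $\rho^k$ gives the recursion $k(2k-n)a_k=\Delta_\mathbf{G}(a_{k-1})$ for $k\ge1$, with the convention $a_0=H$ forced by matching the inhomogeneity $\Delta_\mathbf{G}(H)$ at order $\rho^0$; when $n$ is odd, $2k-n\neq0$ for all $k$, so this solves uniquely to $a_k=\dfrac{\Delta_\mathbf{G}^kH}{k!\prod_{i=1}^k(2i-n)}$, which is the first sum in \eqref{pro}. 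The $s=n/2$ branch is the homogeneous solution: it is exactly the content of Lemma \ref{sol-lemma} applied with $\cal D=\Delta_\mathbf{G}$ and $F=\alpha$ (using \eqref{solhom}, $\cal D_-(\rho^{n/2}(\alpha+\alpha_+))=0$), which gives the second bracketed term in \eqref{pro}. Adding the two and observing $h(\rho)\to0$ as $\rho\downarrow0$ (every term has a positive power of $\rho$, and $\rho^{n/2}$ is well-defined near $0$ for $n$ odd as $n/2>0$) finishes the odd case, and analyticity when $\alpha\equiv0$ is immediate since only the first (integer-power) series survives.

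For $n=2s$ even, the obstruction is the classical resonance: the recursion $k(2k-n)a_k=\Delta_\mathbf{G}(a_{k-1})$ breaks at $k=s$, where the coefficient $k(2k-n)$ vanishes. So I would first record that $a_1,\dots,a_{s-1}$ are determined exactly as before, giving the finite sum $\sum_{k=1}^{s-1}\tfrac{\Delta_\mathbf{G}^kH}{k!\prod_{i=1}^k(2i-n)}\rho^k$, and that consistency at order $\rho^s$ requires $\Delta_\mathbf{G}(a_{s-1})$, i.e. $\Delta_\mathbf{G}^sH$ up to the nonzero constant, to be absorbed — which it cannot be by a pure power series, forcing the introduction of a $\rho^s\log\rho$ term. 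I would then make the refined ansatz $h=\sum_{k=1}^{s-1}a_k\rho^k+\rho^s(\alpha+\alpha_+)+c_n\rho^s\sum_{k\ge0}(\log\rho-q_k)b_k\rho^k$ with $b_k$ to be determined, substitute, and use the identity $\cal D_-(\rho^s f)=\rho^s\cal D_+(f)$ from \eqref{d-} together with $\del_\rho(\rho^s\log\rho\cdot\rho^k f)=\rho^s\log\rho\,\del_\rho(\rho^k f)+\rho^{s+k-1}(s+k)f$ to separate the coefficient of $\rho^{s+k}$ and of $\rho^{s+k}\log\rho$ at each order. Matching the $\log\rho$ coefficients reproduces the recursion $b_k=\dfrac{\Delta_\mathbf{G}^kb_0}{k!\prod_{i=1}^k(2i+n)}$ with $b_0=\Delta_\mathbf{G}^sH$; matching the non-$\log$ coefficients at each order is what determines the jump $q_k-q_{k-1}=\dfrac{n+4k}{k(n+2k)}$ (the arithmetic constant comes from the $\del_\rho$ acting on the $\rho^{s+k}$ prefactor), and fixing the overall constant $c_n=-\bigl((s-1)!\prod_{i=0}^{s-1}(2i-n)\bigr)^{-1}$ is what makes the order-$\rho^s$ equation consistent with the inhomogeneity coming from $a_{s-1}$. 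Finally $\alpha$ and $q_0$ remain free because $\rho^s(\alpha+\alpha_+)$ is a homogeneous solution (Lemma \ref{sol-lemma} again) and shifting $q_0$ by a constant amounts to adding a multiple of that same homogeneous solution.

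The routine part is the bookkeeping of which $\Delta_\mathbf{G}^k$ lands on which coefficient and the two product-of-linear-factors denominators; that is mechanical once the ansatz is fixed. The main obstacle — and the only place where real care is needed — is the even case at the resonant order $k=s$: one must verify that the coefficient of $\rho^s$ (without $\log$) in the substituted ansatz can be solved consistently, which pins down $c_n$, and that the coefficient of $\rho^s\log\rho$ forces $b_0=\Delta_\mathbf{G}^sH$ rather than something else; getting the constant $c_n$ and the sequence $q_k$ right requires tracking the extra terms produced when $\del_\rho$ and $\del_\rho^2$ hit the $\rho^{s+k}$ prefactor in front of the $\log\rho$, and this is where the factor $\tfrac{n+4i}{i(n+2i)}$ originates. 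Once that order is handled, all higher orders $k>s$ propagate by the same clean recursion as in the odd case, and the convergence/analyticity statements follow by inspection of the series (noting that when $\Delta_\mathbf{G}^sH\equiv0$ the entire $\log$-block vanishes, leaving only integer powers $\ge1$ plus the analytic $\rho^s(\alpha+\alpha_+)$ family).
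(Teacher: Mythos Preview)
Your proposal is correct and follows essentially the same approach as the paper. The paper's proof is organized as a direct verification---it applies $\mathcal D_-$ to each piece of the claimed formula separately (the finite sum $\sum_{k=1}^{s-1}$, the $\rho^s\log(\rho)$ block, and the $q_k$ block), then matches the $\rho^{s-1}$, $\rho^s$, and $\rho^{s+k}$ coefficients to pin down $c_n$ and the jumps $q_{k+1}-q_k$---whereas you present it as a Frobenius derivation from the indicial roots $0$ and $n/2$; but the underlying recursion, the use of Lemma~\ref{sol-lemma} for the homogeneous $\rho^{n/2}(\alpha+\alpha_+)$ piece, and the identification of the resonance at $k=s$ forcing the $\log\rho$ term are identical in both.
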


\bprf
That the given function satisfy equation \eqref{fggpp1} can be checked directly. 
In the case $n$ odd it follows  from Lemma \ref{sol-lemma}.
For $n$ even, the situation is a bit more subtle. We give the formulas for each term, ignoring the term $(\rho^{\frac{n}{2}}(\alpha+\alpha_+))$, for which we have seen that it is in the kernel of $\mathcal D_-$:
\begin{eqnarray*}
\mathcal D_-\left(\sum_{k=1}^{s-1}
 \frac{\Delta_\mathbf{G}^kH}{k!\prod_{i=1}^k(2i-n)}\rho^k
\right)&=&
\Delta_\mathbf{G} H - \frac{\Delta_\mathbf{G}^sH}{(s-1)! \Pi_{i=1}^{s-1}(2i-n)}\rho^{s-1},
\\
\mathcal D_- \left( \rho^s\Delta_\mathbf{G}^s \left(\log(\rho)(H+H_+)\right)\right)
&=&
n\rho^{s-1}\Delta^sH +\frac{n+4}{n+2} \rho^s \Delta_\mathbf{G}^{s+1}H
\\
&&{}
+ \sum_{k=1}^\infty \frac{(n+4(k+1))}{(k+1)! \Pi_{i=1}^{k+1}(2i+n) } \Delta^{s+k+1}H\, \rho^{s+k}
\\
\mathcal D_- \left( \rho^s\Delta_\mathbf{G}^s 
\sum_{k=0}^\infty 
q_k\frac{\Delta_\mathbf{G}^kH}{k!\prod_{i=1}^k(2i-n)}\rho^k\right)
&=&(q_1-q_0)
\rho^s\Delta_\mathbf{G}^{s+1}H,
\\
&&{}
+ \sum_{k=1}^\infty \frac{(q_{k+1}-q_k)(n+2(k+1))}{k!\Pi_{i=1}^{k+1}(2i+n)} \Delta_\mathbf{G}^{s+k+1}H\,\rho^{s+k}.
\end{eqnarray*}
Looking at the $\rho^{s-1}$-terms in these formulas we determine $c_n$ as in the theorem by
\[
- \frac{1}{(s-1)! \Pi_{i=1}^{s-1}(2i-n)}
+nc_n =0.\]
Moreover, looking at  the $\rho^s$-terms, we determine $q_1$ by
\[
\frac{n+4}{n+2}
-(q_1-q_0)=0\]
as given in the theorem, 
and finally the other $q_k$'s by
\[
n+4(k+1) - (q_{k+1}-q_k)(n+2(k+1))(k+1)=0.
\]
This proves the theorem.
\eprf
Summarising, we obtain 
\begin{corollary}
 Let \[\gg
\
=
\
2 \der x^\aa( \delta_{\aa b} \der x^b +  H_{\aa \bb}\der x^\bb)+ G_{AB}\der x^A\der x^B 
\] be a gpp-wave with Ricci-flat metric $\mathbf{G}=G_{AB}\der x^A \der x^B$. 
Then  ambient metrics in the sense of 
Definition \ref{ambientdef} 
for the conformal class $[\gg]$ are
 \beqn
 \widetilde{\gg}&=&2\der(\rho t)\der t+t^2\gg+
 \\
 &&
 +t^2\left(
\left( \sum_{k=1}^{m} \frac{\Delta_\mathbf{G}^k(H_{\aa\bb})}{k!\prod_{i=1}^k(2i-n)}\rho^k
+
\rho^{n/2}\Big(F_{\aa\bb}+\sum_{k=1}^\infty \frac{\Delta_\mathbf{G}^k(F_{\aa\bb})}{k!\prod_{i=1}^k(2i+n)}\rho^k\Big)
\right)\der x^\aa \der x^\bb
 \right)
\eeqn
in which $m=\infty$ when $n$ is odd and $m=\frac{n-2}{2}$ when $n$ is even, and 
 $F_{\aa\cc}=F_{\cc\aa}$ are arbitrary functions on $M$, with $\del_a(F_{\aa\cc})=0$.
 Moreover, 
 \begin{enumerate}
 \item 
When  $n$ is odd,
 $F_{\aa\cc}\equiv 0$ gives the unique analytic Ricci-flat Fefferman-Graham ambient metric. 
 \item 
When $n$ is even and 
$\Delta_{\mathbf G}^{\frac{n}{2}}(H_{\aa\cc})=0$, then the metric $\widetilde{\gg}$ is Ricci-flat.
 
 \item
 When $n$ is even and $\Delta_{\mathbf G}^{\frac{n}{2}}(H_{\aa\cc})\not=0$, then
Ricci-flat but non analytic ambient metrics are given  by formula \eqref{pre}.
\end{enumerate}
\end{corollary}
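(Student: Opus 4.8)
The plan is to deduce the corollary entirely from two earlier results: the Corollary that produces equation \eqref{fggppansatz} (the reduction of the Fefferman--Graham equations for a gpp-wave with Ricci-flat $\mathbf G$ to a decoupled scalar system), and Theorem \ref{pftheo} (which solves the single scalar equation \eqref{fggpp1}). First I would note that, because $\mathbf G$ is Ricci-flat, the gpp-wave $\gg$ is a null Ricci Walker metric by the Lemma preceding Theorem \ref{pftheo}, so the hypotheses of Theorem \ref{fgwalkerthm} and of that Corollary hold. Thus a metric $\widetilde{\gg}=2\der t\,\der(\rho t)+t^2(\gg+\h)$ with $\Im(\h)\subset\mathcal N$, $\h=h_{\aa\cc}\der x^{\aa}\der x^{\cc}$, $\del_a h_{\aa\cc}=0$ and $h_{\aa\cc}(0)=0$ is an ambient metric in the sense of Definition \ref{ambientdef} precisely when each component $h_{\aa\cc}=h_{\aa\cc}(\rho,x^{p+1},\dots,x^{n})$ solves equation \eqref{fggppansatz}, i.e. equation \eqref{fggpp1} with $H$ replaced by $H_{\aa\cc}$, to order $O(\rho^m)$ with $m=\infty$ for $n$ odd and $m=\tfrac{n-2}{2}$ for $n$ even.

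With this reduction the main displayed formula is immediate from Theorem \ref{pftheo}: for each ordered pair $\aa,\cc$ I set $H:=H_{\aa\cc}$, $\alpha:=F_{\aa\cc}$, and take $h_{\aa\cc}$ to be the solution \eqref{pro} when $n$ is odd, and the truncation $\sum_{k=1}^{(n-2)/2}\tfrac{\Delta_{\mathbf G}^{k}(H_{\aa\cc})}{k!\prod_{i=1}^{k}(2i-n)}\rho^{k}+\rho^{n/2}\bigl(F_{\aa\cc}+\dots\bigr)$ of \eqref{pre} when $n$ is even. In the even case, applying $\mathcal D_-$ to the finite sum exactly as in the proof of Theorem \ref{pftheo} shows that \eqref{fggpp1} holds modulo $O(\rho^{n/2-1})=O(\rho^{(n-2)/2})$ and that the obstructing $\rho^{n/2-1}$-coefficient is a nonzero multiple of $\Delta_{\mathbf G}^{n/2}(H_{\aa\cc})$ (matching the obstruction-tensor formula of Proposition \ref{liehprop} since $\Bo$ acts as $\Lo=\Delta_{\mathbf G}$ on components, and $R_{\aa\cc}=-\tfrac12\Delta_{\mathbf G}(H_{\aa\cc})$). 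Every such $h_{\aa\cc}$ has lowest $\rho$-power $1$ and, in the even case, $\rho^{n/2}\log\rho$-terms, all of which tend to $0$ as $\rho\downarrow0$, so $h_{\aa\cc}(\rho)\to0$; and $\del_a h_{\aa\cc}=0$ follows from $\del_a H_{\aa\cc}=\del_a F_{\aa\cc}=0$ because $\Delta_{\mathbf G}$ only differentiates the $x^{A}$-variables. Reassembling the $h_{\aa\cc}$ into $\h$ and invoking Theorem \ref{fgwalkerthm} then gives the asserted $\widetilde{\gg}$.

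It remains to justify the three refinements. For \emph{(1)}, when $n$ is odd the indicial exponents of the radial part of \eqref{fggpp1} are $0$ and $n/2$; since $n/2\notin\mathbb Z$, a solution analytic in $\rho$ can only contain the $s=0$ branch, which is uniquely determined, forcing $F_{\aa\cc}\equiv0$, and combining with Theorem \ref{theo2} and the Fefferman--Graham uniqueness statement identifies this as the unique analytic Ricci-flat ambient metric. For \emph{(2)}, when $n$ is even and $\Delta_{\mathbf G}^{n/2}(H_{\aa\cc})=0$ the function ``$H_-$'' of Lemma \ref{sol-lemma} (with $\mathcal D=\Delta_{\mathbf G}$) is defined and \eqref{d+-} gives $\mathcal D_-(H_-)=\Delta_{\mathbf G}(H_{\aa\cc})$, so $h_{\aa\cc}=H_-+\rho^{n/2}(F_{\aa\cc}+\dots)$ solves \eqref{fggpp1} \emph{exactly} and $\widetilde{\gg}$ is Ricci-flat. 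For \emph{(3)}, when $\Delta_{\mathbf G}^{n/2}(H_{\aa\cc})\neq0$ one replaces the truncated sum by the full formula \eqref{pre}, whose proof (computing $\mathcal D_-$ on the $\log$-terms and thereby fixing $c_n$ and the $q_k$) shows it solves \eqref{fggpp1} exactly, yielding a Ricci-flat ambient metric that is only smooth for $\rho\neq0$ and non-analytic at $\rho=0$. The only real work is bookkeeping: tracking the order $O(\rho^{(n-2)/2})$ demanded by Definition \ref{ambientdef}, and checking that the even-dimensional $\log$-solutions of Theorem \ref{pftheo} really define honest ambient metrics with $h(\rho)\to0$; both points are already covered by the cited results, so no new estimate is needed and I expect this to be the most delicate but still routine part.
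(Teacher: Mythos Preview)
Your proposal is correct and follows essentially the same approach as the paper, which presents this corollary with the phrase ``Summarising, we obtain'' immediately after Theorem \ref{pftheo} and gives no separate proof: the statement is meant as a direct combination of the reduction to equation \eqref{fggppansatz} with the explicit solutions of Theorem \ref{pftheo}, exactly as you argue. Your write-up in fact supplies more detail than the paper does (e.g., the appeal to Proposition \ref{liehprop} and Theorem \ref{theo2} for the uniqueness in part (1)); the only minor slip is that the displayed formula in the corollary itself contains no $\log\rho$-terms---those enter only via \eqref{pre} in part (3)---but this does not affect the argument.
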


\subsection{Ambient metrics for Lorentzian pp-waves}
%
Finally we consider Lorentzian pp-waves, i.e., gpp-waves with $p=1$ and  $G_{AB}=\delta_{AB}$. Since $p=1$ we use a different convention as names for the variables: we replace coordinates   $x^1,$ $x^A$, $A=2, \ldots , n-2$, and $x^n$ by
 $v:=x^1$, $y^i=x^{i+1}$, $i=1, \ldots, n-2$, and $u=x^n$. 
We have seen solutions of  equation
~\eqref{fggpp1}
 in Theorem \ref{pftheo}. 
For Lorentzian pp-waves
these are all of the solutions. Here $\Delta_\mathbf{G}=\Delta$ is just the flat Laplacian and we can use the Fourier transform to transform equation \eqref{fggpp1} into an ODE. In fact, in
\cite{AndersonLeistnerNurowski15} we proved the following
\begin{theorem}[\cite{AndersonLeistnerNurowski15}]\label{pptheo}
Let $\Delta$ be the flat Laplacian in $(n-2)$ dimensions.

When $n$ is odd, the most general solutions $h$ to equation (\ref{fggpp1}) with $h(\rho)\to 0$  when $\rho\downarrow 0$ are given by formula (\ref{pro}) in Theorem \ref{pftheo} and parametrized by arbitrary functions
 $\alpha=\alpha(x^{1},\ldots, x^{n-2},u)$.
In particular, there is a unique solution that is analytic in $\rho$ in a neighbourhood of $\rho=0$ with $h(0)=0$. This solution is given  by 
 $\alpha\equiv 0$.

When $n=2s$ is even, the most general solutions $h$ to equation (\ref{fggpp1}) with $h(\rho)\to 0$  when $\rho\downarrow 0$ are given by
\begin{eqnarray}
\label{preq}
h&=&
\sum_{k=1}^{s-1}
 \frac{\Delta^kH}{k!\prod_{i=1}^k(2i-n)}\rho^k
 + 
 \rho^{s}\Big(\alpha+\sum_{k=1}^\infty \frac{\Delta^k\alpha}{k!\prod_{i=1}^k(2i+n)}\rho^k\Big)
\\
\nonumber &&
+\
c_n\rho^{s}
 \sum_{k=0}^\infty \frac{1}{k! \prod_{i=1}^k(2i+n)}\left(
\left(\log(\rho)
- q_k\right){\Delta^{s+k}H}
+ Q\ast 
{\Delta^{s+k}H}\right)\rho^k ,
\nonumber
\end{eqnarray}
where $\alpha=\alpha(y^i,u)$ and $Q=Q(x^i,u)$ are arbitrary functions of their variables, $\ast$ denotes the convolution of two functions with respect to the $y^i$-variables, $c_n$ is the constant defined in Theorem \ref{pftheo}, and the other 
constants are given as follows
\[ q_0 := 0,\ q_k := \sum_{i=1}^k\frac{n+4i}{i(n+2i)},\ \text{  for  $k=1, 2, \ldots $.}\]
In particular, only when $\Delta^{s}H\equiv 0$ there are solutions  that are analytic in $\rho$ in a neighbourhood of $\rho=0$ and with $h(0)=0$. These solutions are not unique but parametrized by the functions $\alpha$.
\end{theorem}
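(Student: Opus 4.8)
Since Theorem~\ref{pftheo} together with Lemma~\ref{sol-lemma} already exhibits a family of solutions of \eqref{fggpp1}, what remains, and is special to the \emph{flat} Laplacian $\Delta_{\mathbf G}=\Delta$, is that these are \emph{all} solutions with $h(\rho)\to 0$ as $\rho\downarrow 0$. The plan is to Fourier transform \eqref{fggpp1} in the variables $y=(x^1,\dots,x^{n-2})$, keeping $u$ as an inert parameter and writing $\widehat h(\xi,u,\rho)$ for the transform of $h$. Using $\widehat{\Delta^kH}=(-|\xi|^2)^k\widehat H$, equation \eqref{fggpp1} becomes the ordinary differential equation
\[
2\rho\,\ddot{\widehat h}+(2-n)\,\dot{\widehat h}+|\xi|^2\,\widehat h+|\xi|^2\,\widehat H(\xi,u)=0 ,
\]
a linear, inhomogeneous, second order equation with a regular singular point at $\rho=0$ whose indicial exponents are $0$ and $n/2$; the substitution $\rho=\sigma^2$ turns its homogeneous part into a Bessel equation of order $n/2$, which is the source of the odd/even dichotomy ($n/2$ a half-integer versus an integer). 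Since every solution satisfies $\widehat h(\xi,u,0)=\lim_{\rho\downarrow0}\widehat h$, the hypothesis $h(\rho)\to0$ forces the exponent-$0$ homogeneous branch to be absent, so the homogeneous freedom compatible with $h\to0$ is exactly a one-parameter (per $\xi$) family built on the exponent $n/2$.

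\emph{Odd $n$.} Here $n/2\notin\mathbb Z$ and the two Frobenius solutions are non-resonant. A particular solution is the power series $\sum_{k\ge1}c_k\rho^k$ determined by $c_1(2-n)+|\xi|^2\widehat H=0$ and $k(2k-n)c_k+|\xi|^2c_{k-1}=0$ for $k\ge2$, i.e. $c_k=\tfrac{(-|\xi|^2)^k\widehat H}{k!\prod_{i=1}^k(2i-n)}$, which transforms back to the first sum in \eqref{pro}. Adding the exponent-$n/2$ homogeneous solution with arbitrary leading function $\widehat\alpha(\xi,u)$ and inverting the transform produces the term $\rho^{n/2}(\alpha+\alpha_+)$, and the computation of Lemma~\ref{sol-lemma} confirms this solves \eqref{fggpp1}; no other solution with $h\to0$ exists, since any two differ by a homogeneous solution vanishing at $\rho=0$. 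As $\rho^{n/2}$ is never analytic for odd $n$, $\alpha\equiv0$ gives the unique $\rho$-analytic solution.

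\emph{Even $n=2s$ --- the main point.} Now the exponents $0$ and $s$ differ by the integer $s$, and the recursion above breaks at $k=s$ because $s(2s-n)=0$: the polynomial part can be continued only up to $k=s-1$, giving $\sum_{k=1}^{s-1}\tfrac{\Delta^kH}{k!\prod_{i=1}^k(2i-n)}\rho^k$, after which a logarithmic term is forced. I would carry out the standard resonant Frobenius computation for the inhomogeneous equation with the ansatz
\[
\widehat h=\sum_{k=1}^{s-1}c_k\rho^k+\rho^{s}\bigl(\widehat\alpha+\widehat{\alpha_+}\bigr)+c_n\,\rho^{s}\sum_{k\ge0}\bigl(\log\rho-q_k\bigr)(-|\xi|^2)^{s+k}\widehat H\,\frac{\rho^k}{k!\prod_{i=1}^k(2i+n)} ,
\]
substitute, and read off order by order: the coefficients $c_k$ for $k\le s-1$ as before; the constant $c_n=-1/\bigl((s-1)!\prod_{i=0}^{s-1}(2i-n)\bigr)$ from the $\rho^{s-1}$-coefficient, which is the source driving the logarithm; the correction constants $q_k$ from matching each order $\rho^{s+k}$, giving the first order recursion $q_{k+1}-q_k=\tfrac{n+4(k+1)}{(k+1)(n+2(k+1))}$ with base point $q_0=0$, hence $q_k=\sum_{i=1}^k\tfrac{n+4i}{i(n+2i)}$; and the exponent-$s$ homogeneous solution contributing the arbitrary function $\alpha$ with its companion series $\alpha_+$. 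Finally, per Fourier mode one is still free to add $\widehat Q(\xi,u)$ times a fixed normalization of the log-free exponent-$s$ homogeneous solution; in physical space this refines the arbitrary function $q_0$ of \eqref{pre} into the convolution term $Q*\Delta^{s+k}H$ of \eqref{preq}. Inverting the Fourier transform, again via $\widehat{\Delta^jH}=(-|\xi|^2)^j\widehat H$, produces exactly \eqref{preq}; a log-free, hence $\rho$-analytic, solution exists precisely when the logarithm's coefficient vanishes identically, i.e. $\Delta^{s}H\equiv0$, and such solutions are then parametrized by $\alpha$ and are not unique.

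The delicate step is the even-$n$ resonant Frobenius analysis: following the logarithmic branch through the inhomogeneous recursion, pinning down $c_n$ and the recursion for the $q_k$ by order-by-order matching, and identifying the convolution-by-$Q$ ambiguity correctly. A secondary care point is the functional setting in which ``Fourier transform'' and ``$\rho$-power series'' are to be read --- e.g. formal power series in $\rho$ with smooth $(y,u)$-coefficients, or a suitable growth/band-limited class on $H,\alpha,Q$ ensuring convergence --- together with the elementary but essential observation that the constant term of a solution of the ODE equals its limit as $\rho\downarrow0$, so that ``$h\to0$'' is precisely the absence of the exponent-$0$ branch.
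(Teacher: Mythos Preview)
The paper does not actually prove this theorem in-text; it is quoted from \cite{AndersonLeistnerNurowski15}, with the paper's only indication of method being the sentence preceding the statement: ``we can use the Fourier transform to transform equation \eqref{fggpp1} into an ODE'' (together with the earlier remark that the indicial exponents are $0$ and $n/2$). Your proposal is precisely this approach, carried through in detail: Fourier transform in $y$ to obtain a second-order linear ODE in $\rho$ with regular singular point at $\rho=0$, the Bessel substitution, the non-resonant Frobenius analysis for odd $n$, and the resonant analysis for even $n=2s$ that forces the logarithm and yields the recursion $q_{k+1}-q_k=\tfrac{n+4(k+1)}{(k+1)(n+2(k+1))}$ and the constant $c_n$. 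This matches the paper's indicated method and the computations verified in the proof of Theorem~\ref{pftheo}; the Fourier transform is what upgrades ``these are solutions'' to ``these are \emph{all} solutions with $h\to0$''.

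One point that deserves a cleaner formulation is your identification of the $Q$-convolution freedom. As you note, per Fourier mode the only homogeneous freedom compatible with $h\to0$ is the exponent-$n/2$ branch, with an arbitrary leading coefficient depending on $(\xi,u)$; inverting, that is exactly the $\alpha$-freedom. The convolution term $Q\ast\Delta^{s+k}H$ in \eqref{preq} is of the same exponent-$s$ homogeneous type (in Fourier space it is $\widehat Q(\xi,u)\,(-|\xi|^2)^{s+k}\widehat H$, i.e.\ an exponent-$s$ Frobenius series with leading coefficient $\widehat Q(-|\xi|^2)^s\widehat H$), so it is not an independent new freedom but a repackaging that matches the arbitrary function $q_0=q_0(x^\aa)$ of Theorem~\ref{pftheo} in the flat case. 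Your sentence ``refines the arbitrary function $q_0$ of \eqref{pre} into the convolution term'' is on the right track; just make explicit that this does not enlarge the solution space beyond what the $\alpha$-parametrization already covers, so that the phrase ``most general solutions'' is justified.
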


With the results of Corollary \ref{introcorollary}, in particular with the formula for the obstructiont tensor, for Lorentzian pp-waves  we get the complete picture  in Theorem~\ref{pptheo-intro}:
\begin{corollary}
Let \begin{equation}\label{gpp1}
\gg=2\der u\der v +H\,\der u^2 +\sum_{i=1}^{n-2}(\der y^i)^2
\end{equation}
be a Lorentzian pp-wave metric with
 $H=H(y^1, \ldots, y^{n-2},u)$ a function not depending on $v$. Let $\Delta$ is the flat Laplacian in $n-2$ dimensions.
 \begin{enumerate}
 \item
If $n$ is odd, the unique  Ricci-flat ambient metric that is analytic in $\rho$ is 
\[
 \widetilde{\gg}=2\der(\rho t)\der t+t^2\gg+
 +t^2 \left(\sum_{k=1}^{\infty} \frac{\Delta^k(H)}{k!\prod_{i=1}^k(2i-n)}\rho^k\right)\der u^2.
 \]
 Moreover, all non-analytic solutions are 
  parametrized by arbitrary functions
 $\alpha=\alpha(y^{1},\ldots, y^{n-2},u)$ and given by
 formula (\ref{pro}) 
 in Theorem \ref{pftheo}, 
 in which $\Delta_\mathbf{G}$ is replaced by the flat Laplacian.
 \item If $n=2s$ is even the obstruction tensor for $[\gg]$ is a constant multiple of $\Delta^{n/2}(H) \der u^2$. If it vanishes, all  Ricci-flat ambient metrics that are analytic in $\rho$ are given by
 \[
 \widetilde{\gg}=2\der(\rho t)\der t+t^2\gg
 +t^2\left(
  \sum_{k=1}^{s-1} \frac{\Delta^k(H)}{k!\prod_{i=1}^k(2i-n)}\rho^k
+
\sum_{k=0}^\infty \frac{\Delta^k(\alpha)}{k!\prod_{i=1}^k(2i+n)}\rho^{\frac{n}{2}+k}
\right)\der u^2,
 \]
 where $\alpha=\alpha(y^1, \ldots, y^{n-2},u)$ is an arbitrary smooth function.
Independently of the vanishing of the obstruction tensor,   non-analytic ambient metrics  can be obtained from 
formula (\ref{preq}) in Theorem \ref{pptheo}.
  \end{enumerate}
 
 \end{corollary}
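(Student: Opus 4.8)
The plan is to recognise a Lorentzian pp-wave as a null Ricci Walker metric of precisely the kind covered by Corollary~\ref{linecor}, to rewrite the linear equation it produces as equation~\eqref{fggpp1} with flat transverse metric, and then to read off all solutions from Theorems~\ref{pftheo} and~\ref{pptheo}, extracting the obstruction tensor from Corollary~\ref{introcorollary}.

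First I would fix the adapted frame for $\gg$ as in~\eqref{gpp1}, which is the $p=1$ specialisation of the gpp-wave frame: the parallel null line $\cal N=\mathbb R\cdot\partial_v$, frame $\e_1=\partial_v$, $\e_{i+1}=\partial_{y^i}$ for $i=1,\dots,n-2$, and $\e_n=\partial_u-H\partial_v$, with dual co-frame $\Theta^1=\der v+H\der u$, $\Theta^{i+1}=\der y^i$, $\Theta^n=\der u$. This has the form required in Proposition~\ref{walkerprop}, and from the gpp-wave curvature identities already established above one has $R_{aijk}=0$ and $\Ric(\gg)=-\tfrac12\Delta H\,(\Theta^n)^2$ with $\Delta$ the flat Laplacian in the $y^i$; in particular $\Im(\Ric)\subset\cal N$ with coefficient $f=-\tfrac12\Delta H$ and $\partial_v f=0$. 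So Corollary~\ref{linecor} applies and tells us that every ambient metric for $[\gg]$ has the form $\widetilde{\gg}=2\der t\,\der(\rho t)+t^2\gg+t^2 h\,\der u^2$, with $h=h(\rho,y^i,u)$ (the restriction $\partial_v h=0$ being forced by divergence-freeness), where $h$ solves $\Delta_-(h)+2f=O(\rho^m)$. As the transverse metric is flat, $\Lo=\Delta$ and $\Delta_-$ from~\eqref{fg1neutral2do} equals $2\rho\ddot h+(2-n)\dot h-\Delta h$, while $2f=-\Delta H$; hence $h$ satisfies exactly equation~\eqref{fggpp1}.

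For $n$ odd ($m=\infty$) I would then invoke the odd part of Theorem~\ref{pptheo}: the solutions of~\eqref{fggpp1} with $h(\rho)\to 0$ as $\rho\downarrow 0$ are precisely the functions~\eqref{pro}, parametrised by an arbitrary $\alpha=\alpha(y^1,\dots,y^{n-2},u)$, the unique $\rho$-analytic one being $\alpha\equiv 0$; substituting $\alpha\equiv 0$ into~\eqref{pro} (with $\Delta_\mathbf{G}$ the flat Laplacian) and into $\widetilde{\gg}=2\der(\rho t)\der t+t^2\gg+t^2 h\,\der u^2$ gives the stated analytic ambient metric, and general $\alpha$ yields the non-analytic ones, which is part~(1). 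For $n=2s$ even I would use Corollary~\ref{introcorollary}, which gives $\cal O_{ij}=c_n\,\Box^m R_{ij}$ with $m=\tfrac{n-2}{2}=s-1$; the one small computation here is that, since $\der u=\Theta^n$ is parallel, the transverse metric flat and $\partial_v h=0$, the tensor Laplacian acts on a symmetric tensor $\psi\,\der u^2$ with $\partial_v\psi=0$ by $\Box(\psi\,\der u^2)=(\Delta\psi)\,\der u^2$, so iterating gives $\cal O=c_n\Box^{s-1}\!\big({-}\tfrac12\Delta H\,\der u^2\big)=-\tfrac{c_n}{2}\,\Delta^{s}(H)\,\der u^2$, a nonzero constant times $\Delta^{n/2}(H)\,\der u^2$. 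If $\cal O$ vanishes, i.e.\ $\Delta^{s}H\equiv 0$, the logarithmic terms in~\eqref{preq} drop out and the even part of Theorem~\ref{pptheo} gives the $\rho$-analytic solutions of~\eqref{fggpp1} as $h=\sum_{k=1}^{s-1}\tfrac{\Delta^kH}{k!\prod_{i=1}^k(2i-n)}\rho^k+\sum_{k=0}^\infty\tfrac{\Delta^k\alpha}{k!\prod_{i=1}^k(2i+n)}\rho^{s+k}$ with $\alpha$ arbitrary; substituting into $\widetilde{\gg}$ yields the stated family, while the non-analytic ambient metrics come from the full formula~\eqref{preq}. This completes part~(2).

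I do not expect a genuine obstacle: the hard work --- linearising the Fefferman--Graham equations and solving the resulting PDE --- is already carried out in the earlier sections, so what remains is the verification that pp-waves satisfy the hypotheses of Corollary~\ref{linecor} (which follows from the gpp-wave curvature computations), the elementary identity $\Box(\psi\,\der u^2)=(\Delta\psi)\,\der u^2$ used to evaluate $\Box^m R_{ij}$, and the bookkeeping of substitutions into Theorems~\ref{pftheo} and~\ref{pptheo}. The only mildly delicate point is keeping the smooth-versus-$\rho$-analytic dichotomy straight when reading off the solution families in the even-dimensional case.
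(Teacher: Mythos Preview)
Your proposal is correct and follows the same route the paper indicates: the corollary is not given a separate proof in the paper but is stated as a direct consequence of Corollary~\ref{introcorollary} (for the obstruction tensor) together with Theorem~\ref{pptheo} (for the complete list of solutions of~\eqref{fggpp1}), and your write-up simply unpacks these ingredients explicitly. The only cosmetic difference is that the paper routes through the gpp-wave corollary (with $p=1$, $G_{AB}=\delta_{AB}$) rather than invoking Corollary~\ref{linecor} directly, but these are the same statement in this situation.
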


\bibliography{GEOBIB}
\end{document}